\newlength{\spse}
\numberwithin{equation}{section}
\theoremstyle{plain}
\newtheorem{thm}{Theorem}[section]
\newtheorem{lem}[thm]{Lemma}
\newtheorem{prop}[thm]{Proposition}
\newtheorem{rem}[thm]{Remark}
\theoremstyle{definition}
\newtheorem{exam}[thm]{Example}
\newcommand{\brac}[1]{\left(#1\right)}
\newcommand{\abs}[1]{\left\vert#1\right\vert}
\newcommand{\norm}[1]{\left\Vert#1\right\Vert}
\def\half{\frac 1 2}
\newcommand{\bk}{\mathbf{k}}
\newcommand{\bx}{\mathbf{x}}
\newfont{\iams}{msbm9}
\newcommand{\commentbis}[1]{}
\newcommand{\be}{\begin{eqnarray}}
\newcommand{\ee}{\end{eqnarray}}
\newcommand{\beno}{\begin{eqnarray*}}
\newcommand{\eeno}{\end{eqnarray*}}
\newcommand{\barr}[1]{\begin{array}{#1}}
\newcommand{\earr}{\end{array}}
\newcommand{\beq}{\begin{equation}}
\newcommand{\eeq}{\end{equation}}
\newcommand{\beqa}{\begin{eqnarray}}
\newcommand{\eeqa}{\end{eqnarray}}
\newcommand{\bv}{{\bf v}}
\newcommand{\bV}{{\bf V}}
\newcommand{\bn}{{\bf n}}
\newcommand{\bzero}{\mathbf{0}}
\newcommand{\bone}{\mathbf{1}}
\newcommand{\bl}{\mathbf{l}}
\newcommand{\bi}{\mathbf{i}}
\newcommand{\bj}{\mathbf{j}}
\newcommand{\bW}{\mathbf{W}}
\begin{document}
\baselineskip=1.5pc

\vspace{.5in}

\begin{center}

{\large\bf An adaptive multiresolution interior penalty discontinuous Galerkin method for wave equations in second order form}

\end{center}

\vspace{.1in}

\centerline{
Juntao Huang \footnote{Department of Mathematics,
Michigan State University, East Lansing, MI 48824, USA.
E-mail: huangj75@msu.edu} \qquad
Yuan Liu\footnote{Department of Mathematics, Statistics and Physics,
Wichita State University, Wichita, KS 67260, USA.
E-mail: liu@math.wichita.edu.
Research supported in part by a grant from the Simons Foundation (426993, Yuan Liu). }\qquad
Wei Guo  \footnote{Department of Mathematics and Statistics, Texas Tech University, Lubbock, TX, 70409. E-mail:
weimath.guo@ttu.edu. Research is supported by NSF grant DMS-1830838} \qquad
Zhanjing Tao  \footnote{School of Mathematics, Jilin University, Changchun, Jilin 130012, China. zjtao@jlu.edu.cn. Corresponding author}  \qquad
Yingda Cheng  \footnote{Department of Mathematics, Department of Computational Mathematics, Science and Engineering, Michigan State University, East Lansing, MI 48824, USA. E-mail: ycheng@msu.edu. Research is supported by NSF grants DMS-1453661 and DMS-1720023}
}

\vspace{.4in}

\centerline{\bf Abstract}

%\vspace{.1in}
In this paper, we propose a class of adaptive multiresolution (also called adaptive sparse grid) discontinuous Galerkin (DG) methods for simulating scalar wave equations in second order form in space. The two key ingredients of the schemes include an interior penalty DG formulation   in the adaptive function space and two classes of multiwavelets for achieving multiresolution. In particular, the orthonormal Alpert's multiwavelets are used to express the DG solution in terms of a hierarchical structure, and the interpolatory multiwavelets are further introduced to enhance computational efficiency in the presence of variable wave speed or nonlinear source. Some theoretical results on stability and accuracy of the proposed method are presented. Benchmark numerical tests in 2D and 3D are provided to validate the performance of the method.

\bigskip

\bigskip
%\vfill

{\bf Key Words:}
Sparse grid; Multiresolution; Interior Penalty Discontinuous Galerkin Method; Wave Equation;  Adaptivity.

\pagenumbering{arabic}

\section{Introduction}
\label{sec1}
\setcounter{equation}{0}
\setcounter{figure}{0}
\setcounter{table}{0}

Wave propagation, governed by the wave equation, is ubiquitous in science and engineering, such as sound waves, light waves, and water waves propagating in acoustics, electromagnetics and geoscience. Designing efficient and robust numerical methods to solve the wave equation is of fundamental and practical importance in those applications. The goal of this work is to design a class of numerical solvers that are adaptive, high order accurate, and more importantly, work efficiently in high dimensions.  In particular,   we develop a class of adaptive multiresolution (also called adaptive sparse grid) discontinuous Galerkin (DG) method for the following model second-order wave equation  
\begin{equation}\label{eq1}
u_{tt} =  \nabla \cdot (c^2(\mathbf{x}) \nabla u ) + f
\end{equation}
on the bounded domain $\Omega=[0, 1]^d$ in arbitrary $d$ dimensions, subject to initial conditions  
\begin{equation}\label{eq2}
u(\mathbf{x}, 0) = u_0(\mathbf{x}), \qquad u_t(\mathbf{x}, 0) = v_0(\mathbf{x}).
\end{equation} 
We assume that the wave speed  $c(\mathbf{x})$ is piecewise smooth and  bounded below and above uniformly, i.e., $0<C_* \le c^2(\bx) \le C^* < \infty$ . For simplicity, we only consider periodic, Dirichlet and Neumann boundary conditions in this paper. Extensions to more complicated domains and other types of boundary conditions will be considered in the future work.

A vast amount of numerical methods have been developed in the literature on the numerical approximations of the wave equation, including finite difference discretization \cite{gustafsson1995time,Cohen_2002,henshaw:1730,sjogreen2012fourth}, spectral and spectral element discretization \cite{gottlieb1977numerical,seriani1994spectral,Tromp:2008zr} and finite element discretization \cite{Jol-2003,ainsworth2006dispersive}, to name a few.  As a special class of finite element discretization, the DG methods \cite{Reed_hill_73,Cockburn_2000_history} have become very popular recently in approximating partial differential equations (PDEs) due to their distinguished advantages in handling  geometry, boundary conditions and accommodating adaptivity. In the context of the wave simulations,  DG methods have been successfully developed for simulating wave equations in first-order form \cite{HesthavenWarburton02,kaser2006arbitrary,wilcox2010high},   second-order form \cite{grote2006discontinuous,xing2013energy, chou2014wave,appelo2015new}, and with $hp$-adaptivity \cite{etienne2010hp}. In this paper, we utilize the symmetric interior penalty DG (IPDG) method \cite{arnold1982interior} for wave equation in second order form \cite{grote2006discontinuous}, though our framework can work with other types of DG schemes. %As an additional advantage of this treatment, the semi-discrete DG scheme conserves a discrete energy over time. 

 Adaptivity is crucial for efficient simulations of the wave equation due to the multiscale nature of the solution structures. The well-known adaptive mesh refinement (AMR) \cite{BERGER1984484,berger1989local}  adjusts the computational grid adaptively to track small scale features of the underlying problems, improving computational efficiency significantly.
  AMR has been incorporated in various software framework and packages to simulate wave propagation with great success \cite{brown1997overture,p4est}.
 In contrast, this paper considers adaptive simulations in the multiresolution sense. The main idea of multiresolution analysis (MRA) \cite{mallat1999wavelet} is to explore mesh hierarchy, which induces nested polynomial approximation spaces to accelerate the computation and in the mean time circumvents the need for \emph{a posteriori} error indicators. MRA is also the foundation of sparse grid methods \cite{bungartz2004sparse}, which is known as a popular dimension reduction technique for solving high dimensional problems.
As a continuation of our previous research for adaptive multiresolution (also called adaptive sparse grid) DG methods \cite{guo2017adaptive, huang2019adaptive} for first order equations, this paper develops an adaptive multiresolution IPDG solver for 2D and 3D scalar wave equations \eqref{eq1}. In particular, we employ the Alpert's multiwavelets as the DG bases in the IPDG formulation, following the approach proposed in \cite{wang2016elliptic,guo2016transport,guo2017adaptive} for linear equations, together with the interpolatory multiwavelets  for efficiently computing variable wave speed problems as done in \cite{huang2019adaptive} for nonlinear hyperbolic conservation laws. We refer the readers to \cite{huang2019adaptive} for more details on the background of adaptive multiresolution DG methods \cite{calle_wavelets_2005, hovhannisyan2014scalar}. It is worth noting that a fast matrix-vector multiplication algorithm \cite{shen2010efficient,zeiser2011fast} is essential for efficient implementation of the method with varying wave speed.  
We conducts error analysis for the semi-discrete formulation for the scheme with and without interpolations. First, when the sparse grid piecewise polynomial space of degree $k$ is  employed in the IPDG formulation as in \cite{wang2016elliptic}, the newly proposed method converges with order $k$ and a polylogarithmic  factor in the energy norm for sufficiently smooth problems with constant coefficients. Second,  in the case of smooth problems with variable coefficients, the proposed interpolatory technique ensures a high order local truncation error and hence preserve the original accuracy of the scheme given sufficient high order accuracy of interpolation. Numerical experiments in 2D and 3D verify the accuracy of the methods. In particular, the adaptive scheme is demonstrated to capture the fine scale structure presented in inhomogeneous media.
%The numerical experiments verify the analysis established. More importantly, it is observed that 
% the newly proposed method with adaptivity achieves multiresolution with great computational efficiency for simulating the wave equation with less regularity.

The rest of the paper is organized as follows. In Section \ref{sec:mra}, we review Alpert's and interpolatory multiwavelets.
Section \ref{sec:dg} describes the numerical schemes with details on some theoretical results and implementations.
Section \ref{sec:numeric} contains numerical examples.
In Section \ref{sec:conclusion}, we make conclusions and discuss future work. 
Appendix collects detailed formulas of interpolatory multiwavelets used in this paper.

\section{MRA and multiwavelets}
\label{sec:mra}

In this section, we first review the $L^2$ orthonormal Alpert's multiwavelets \cite{alpert1993class} and the sparse grid DG finite element space \cite{wang2016elliptic,guo2016transport}. Next, we review the interpolatory multiwavelets  proposed in \cite{tao2019collocation}, which has been used for the calculation of nonlinear conservation laws in \cite{huang2019adaptive}.

\subsection{Alpert's multiwavelets}\label{subsec:alpert-basis}

In this subsection, we review the construction of sparse grid DG finite element space based on Alpert's multiwavelets \cite{alpert1993class}. For a unit domain $\Omega=[0,1]$ in 1D,  we define a set of nested grids, where the $n$-th level grid $\Omega_n$ consists of $2^n$ uniform cells
\begin{equation*}
  I_{n}^j=(2^{-n}j, 2^{-n}(j+1)], \quad j=0, \ldots, 2^n-1
\end{equation*}
for $n \ge 0.$ For notational convenience, we also denote $I_{-1}=[0,1].$
The piecewise polynomial space of degree at most $k\ge1$ on the $n$-th level grid $\Omega_n$ for $n\ge 0$ is denoted by
\begin{equation}\label{eq:DG-space-Vn}
V_n^k:=\{v: v \in P^k(I_{n}^j),\, \forall \,j=0, \ldots, 2^n-1\}.
\end{equation}
Because of the nested structure
$$V_0^k \subset V_1^k \subset V_2^k \subset V_3^k \subset  \cdots,$$
we define the multiwavelet subspace $W_n^k$, $n=1, 2, \ldots $ as the orthogonal complement of $V_{n-1}^k$ in $V_{n}^k$ with respect to the $L^2$ inner product on $[0,1]$, i.e.,
\begin{equation*}
V_{n-1}^k \oplus W_n^k=V_{n}^k, \quad W_n^k \perp V_{n-1}^k.
\end{equation*}
Denote $W_0^k:=V_0^k$, we have $V_n^k=\bigoplus_{0 \leq l \leq n} W_l^k$. 
A set of orthonormal basis can be defined on $W_l^k$ as follows. When $l=0$, the basis $v^0_{i,0}(x)$, $ i=0,\ldots,k$ are the normalized shifted Legendre polynomials in $[0,1]$. When $l>0$, the Alpert's orthonormal multiwavelets are employed \cite{alpert1993class} as the bases and denoted by 
$$v^j_{i,l}(x),\quad i=0,\ldots,k,\quad j=0,\ldots,2^{l-1}-1.$$

We then follow a tensor-product approach to construct the hierarchical finite element space in multi-dimensional space.  Denote $\bl=(l_1,\cdots,l_d)\in\mathbb{N}_0^d$ as the mesh level in a multivariate sense, where $\mathbb{N}_0$  denotes the set of nonnegative integers, we can define the tensor-product mesh grid $\Omega_\bl=\Omega_{l_1}\otimes\cdots\otimes\Omega_{l_d}$ and the corresponding mesh size $h_\bl=(h_{l_1},\cdots,h_{l_d}).$ Based on the grid $\Omega_\bl$, we denote  $I_\bl^\bj=\{\bx:x_m\in(h_mj_m,h_m(j_{m}+1)),m=1,\cdots,d\}$ as an elementary cell, and 
$$\bV_\bl^k:=\{\bv:  \bv \in Q^k(I^{\bj}_{\bl}), \,\,  \bzero \leq \bj  \leq 2^{\bl}-\bone \}= V_{l_1,x_1}^k\times\cdots\times  V_{l_d,x_d}^k$$
as the tensor-product piecewise polynomial space, where $Q^k(I^{\bj}_{\bl})$ represents the collection of polynomials of degree up to $k$ in each dimension on cell $I^{\bj}_{\bl}$. 
If we use equal mesh refinement of size $h_N=2^{-N}$ in each coordinate direction, the  grid and space will be denoted by $\Omega_N$ and $\bV_N^k$, respectively.  
Based on a tensor-product construction, the multi-dimensional increment space can be  defined as
$$\bW_\bl^k=W_{l_1,x_1}^k\times\cdots\times  W_{l_d,x_d}^k.$$
The basis functions in multi-dimensions are defined as
\begin{equation}\label{eq:multidim-basis}
v^\bj_{\bi,\bl}(\bx) := \prod_{m=1}^d v^{j_m}_{i_m,l_m}(x_m),
\end{equation}
for $\bl \in \mathbb{N}_0^d$, $\bj \in B_\bl := \{\bj\in\mathbb{N}_0^d: \,\mathbf{0}\leq\bj\leq\max(2^{\bl-\mathbf{1}}-\mathbf{1},\mathbf{0}) \}$ and $\mathbf{1}\leq\bi\leq \bk+\mathbf{1}$. The orthonormality of the bases can be easily verified.

Using the notation of $$
|\bl|_1:=\sum_{m=1}^d l_m, \qquad   |\bl|_\infty:=\max_{1\leq m \leq d} l_m.
$$
and  the same component-wise arithmetic operations and relations   as defined in \cite{wang2016elliptic},  we reach the decomposition
\begin{equation}\label{eq:hiere_tp}
\bV_N^k=\bigoplus_{\substack{ |\bl|_\infty \leq N\\\bl \in \mathbb{N}_0^d}} \bW_\bl^k.
\end{equation}
On the other hand, a standard choice of sparse grid   space  \cite{wang2016elliptic, guo2016transport} is
\begin{equation}
\label{eq:hiere_sg}
\hat{\bV}_N^k=\bigoplus_{\substack{ |\bl|_1 \leq N\\\bl \in \mathbb{N}_0^d}}\bW_\bl^k \subset \bV_N^k.
\end{equation}
We skip the discussions on the details with regard to the property of the space, but refer the readers to \cite{wang2016elliptic, guo2016transport}. In Section \ref{sec:dg}, we will describe   the adaptive scheme which adapts a subspace of $\bV_N^k$ according to the numerical solution, hence offering more flexibility and efficiency.

\subsection{Interpolatory multiwavelets}\label{subsec:interp-basis}

Alpert's multiwavelets described in Section \ref{subsec:alpert-basis} are associated with the $L^2$ projection operator. The idea of interpolatory multiwavelet bases   \cite{tao2019collocation} is based on interpolation operators and is essential for the computation of variable coefficient problems. In this work, only Lagrange interpolation is considered, while we note that  Hermite interpolation can be used. The details are provided below.

 We define the set of interpolation points on the interval $I=[0,1]$ at mesh level 0 by $X_0 = \{ x_i \}_{i=0}^M\subset I$. Here, the number of points in $X_0$ is $(M+1)$. We defer the discussion of the relations between $M$ and $k$ to Section \ref{sec:schemeinter}.
 
  The interpolation points at mesh level $n\ge1$, $X_n$ can be obtained correspondingly as
\begin{equation*}
    X_n = \{ x_{i,n}^j := 2^{-n}(x_i+j), \quad i=0,\dots,M, \quad j=0,\dots,2^{n}-1 \}.
\end{equation*}
We require the points to be nested, i.e. 
\begin{equation}
\label{nestpts}
    X_0 \subset X_1 \subset X_2 \subset X_3 \subset \cdots.
\end{equation}
This can be achieved by requiring  $X_0\subset X_1$.

Given the nodes, we define the basis functions on the zeroth level grid as  Lagrange interpolation polynomials of degree $\le M$ which satisfy the property:
\begin{equation*}
    \phi_{i}(x_{i'}) = \delta_{ii'},
\end{equation*}
for $ i,i'=0,\dots,M$. It is easy to see that $\textrm{span} \{ \phi_{i},  i=0,\dots,M \}=V_0^M.  $ 
With the basis function at mesh level zero, we can define the basis functions at mesh level $n\ge1$:
\begin{equation*}
  \phi_{i,n}^j := \phi_{i}(2^nx-j), \quad i=0,\dots,M, \quad j=0,\dots,2^n-1 
\end{equation*}
which form a complete basis set for $V_n^M.$

We now introduce the hierarchical representations and the interpolatory multiwavelets. Define $\tilde{X}_0 := X_0$ and { $\tilde{X}_n := X_n\backslash X_{n-1}$} for $n\ge1$, then we have the decomposition
\begin{equation*}
    X_n = \tilde{X}_0 \cup \tilde{X}_1 \cup \cdots \cup \tilde{X}_n .
\end{equation*}
Denote the points in $\tilde{X}_1$ by $\tilde{X}_1=\{ \tilde{x}_i \}_{i=0}^M$. Then the points in $\tilde{X}_n$ for $n\ge1$ can be represented by
\begin{equation*}
    \tilde{X}_n = \{ \tilde{x}_{i,n}^j:=2^{-(n-1)}(\tilde{x}_i+j), \quad  i=0,\dots,M, \quad j=0,\dots,2^{n-1}-1 \}.
\end{equation*}

%\begin{equation*}
%    \tilde{W}_1^M = \textrm{span}\{ \psi_{i,l}, \quad i = 0,\dots,P, \quad l=0,\dots,K \}
%\end{equation*}
%where

For notational convenience, we let $\tilde{W}_0^M:=V_0^M.$ The increment function space $\tilde{W}_n^M$ for $n\ge1$ is introduced as a function space    that satisfies
\begin{equation}\label{eq:func-space-sum}
    V_n^M = V_{n-1}^M \oplus \tilde{W}_n^M,
\end{equation}
%where $W_0^M:=V_0^M,$ and %Note that $W_n^M$ for $n\ge1$ is not uniquely determined with only the constrain \eqref{eq:func-space-sum}. We pin down a set of basis functions in $W_1^M$ 
and is defined through the multiwavelets
 $\psi_{i} \in V_1^M$ that satisfies
\begin{equation*}
    \psi_{i}(x_{i'}) = 0, \quad \psi_{i}(\tilde{x}_{i'}) = \delta_{i,i'},
\end{equation*}
for $i,i'=0,\dots,M$. Then  $\tilde{W}_n^M$ is given by
\begin{equation*}
    \tilde{W}_n^M = \textrm{span} \{ \psi_{i,n}^j, \quad i = 0,\dots,M, \quad j=0,\dots,2^{n-1}-1 \}
\end{equation*}
where $\psi_{i,n}^j(x) := \psi_{i}(2^{n-1}x-j)$.

% The construction above has close connection with interpolation operators.   For a given function $f(x)\in C^{K+1}(I)$, we define $\mathcal{I}^{P,K}_{N}[f]$ as the standard Hermite interpolation on $V^{M}_{N},$ and have the representation
% \begin{align*}% \label{eq:1D_interpolation2}
% 	\mathcal{I}^{P,K}_{N}[f](x) 
% 	% = \sum_{n=0}^{N} \widetilde{\mathcal{I}}^{k}_{n}[f](x)
% 	= \sum_{n=0}^{N} \sum _{j=0}^{\max(2^{n-1}-1,0)}\sum_{l=0}^K \sum_{i=0}^{P} b_{i,l,n}^{j} \psi^{j}_{i,l,n}(x).
% \end{align*}
% Clearly, $(\mathcal{I}^{P,K}_{n}-\mathcal{I}^{P,K}_{n-1})[f](x) \in  \tilde{W}_n^M.$ 
% The algorithm converting between the point values and the derivatives $\{ f^{(l)}(x_{i,n}^j) \}$ to hierarchical coefficients $\{ b_{i,l,n}^{j} \}$ is given in \cite{tao2019collocation}, and by a standard  argument in fast wavelet transform, can be performed in $O(M2^n)$ flops.

The multi-dimensional construction follows similar lines as in Section \ref{subsec:alpert-basis}. We let 
$$\tilde{\bW}_\bl^M=\tilde{W}_{l_1,x_1}^M\times\cdots\times  \tilde{W}_{l_d,x_d}^M,$$
then 
\begin{equation*}
\bV_N^M=\bigoplus_{\substack{ |\bl|_\infty \leq N\\\bl \in \mathbb{N}_0^d}} \tilde{\bW}_\bl^M,
\end{equation*}
while the sparse grid approximation space is
\begin{equation*}
\hat{\bV}_N^M=\bigoplus_{\substack{ |\bl|_1 \leq N\\\bl \in \mathbb{N}_0^d}}\tilde{\bW}_\bl^M.
\end{equation*}
Note that the construction by Alpert's multiwavelet and the interpolatory multiwavelet gives the same sparse grid space because of the same nested structure. 
Finally, the interpolation operator in multidimension is defined as $\mathcal{I}^{M}_{N}: C(\Omega)\rightarrow \mathbf{V}^{M}_{N}$:
\begin{align*}% \label{eq:multiD_interpolation}
	\mathcal{I}^{M}_{N}[f](\mathbf{x}) 
	= \sum_{ \substack{ \abs{\mathbf{n}}_{\infty}\leq N \\ \mathbf{0}\leq \mathbf{j} \leq \max(2^{\mathbf{n}-1}-\mathbf{1},\mathbf{0}) \\ \mathbf{0}\leq \mathbf{i}\leq \mathbf{M}  } } b^{\mathbf{j}}_{\mathbf{i},  \mathbf{n}} \psi^{\mathbf{j}}_{\mathbf{i},  \mathbf{n}} (\mathbf{x}),
\end{align*}
where the multi-dimensional basis functions $\psi^{\mathbf{j}}_{\mathbf{i}, \mathbf{n}} (\mathbf{x})$ are defined in the same approach as \eqref{eq:multidim-basis} by tensor products:
\begin{equation}\label{eq:multidim-basis-interpolation}
	\psi^{\mathbf{j}}_{\mathbf{i}, \mathbf{n}} (\mathbf{x}) := \prod_{m=1}^d \psi^{j_m}_{i_m,n_m}(x_m).
\end{equation}
%If the space is switched from $\mathbf{V}^{M}_{N}$ to some subset of $\mathbf{V}^{M}_{N},$ e.g. the sparse grid space $\hat{\bV}_N^M$ or some other subset of $\mathbf{V}^{M}_{N}$ that is dynamically chosen,
For the sparse grid space $\hat{\bV}_N^M$ or any adaptively chosen subspace of $\bV_N^M,$ the interpolation operator, which is denoted by $\mathcal{I}_h$ in later sections, can be defined accordingly, by taking only multiwavelet basis functions that belong to that space. For completeness, we collect the detailed formulas of the interpolation points and the associated interpolatory multiwavelets used in this work in the Appendix.

\section{Adaptive multiresolution DG scheme}
\label{sec:dg}
% \todo{need to check if all $h$ has been replaced by $h_N$}

In this section, we construct  our numerical schemes for $d$-dimensional wave equation \eqref{eq1}.  We start by reviewing the semi-discrete IPDG formulation and its properties in Section \ref{sec:semi1}. For variable wave speed, schemes with multiresolution interpolation are described in Section \ref{sec:schemeinter}. Time stepping, adaptivity and fast implementations are discussed in Sections \ref{subsec:timeadapt} and \ref{subsec:fast}. 

\subsection{Semi-discrete scheme}
\label{sec:semi1}

We use the IPDG formulation \cite{grote2006discontinuous} for solving \eqref{eq1}.    Namely, we look for $u_h \in \bV,$ such that for any test function $v \in \bV$,
\begin{align}\label{sp2}
\int_{\Omega} (u_h)_{tt} v \ d\textbf{x}  +  B(u_h, v)  =L(v).
\end{align}
where the bilinear form is defined as 
\begin{align}\label{sp3}
B(u_h, v) & = \int_{\Omega} c^2 \nabla u_h \cdot \nabla v \ d\textbf{x}  - \sum_{e\in \Gamma } \int_{e} \{c^2 \nabla u_h \} \cdot [v] \ ds  -\sum_{e \in \Gamma} \int_{e} \{c^2\nabla v \} \cdot [u_h] \ ds  \\ \nonumber & +  \sum_{e \in \Gamma} \frac{\sigma}{h_N} \int_e [u_h] \cdot [v] \ ds
\end{align}
and
\begin{equation}\label{sp4}
L(v) =  \int_{\Omega} fv \ d \textbf{x}
\end{equation}
for periodic or homogeneous Dirichlet boundary condition, and 
\begin{equation}\label{sp5}
L(v) = \int_{\Omega} fv d \textbf{x}
+ \sum_{e\in  \Gamma_D} \int_e (-c^2\nabla v \cdot \textbf{n} +\frac{\sigma}{h_N}v)g_D ds + \sum_{e\in  \Gamma_N} \int_e g_Nv ds
\end{equation}
for  Dirichlet and Neumann boundary conditions $u(\mathbf{x}, t)|_{\mathbf{x} \in \Gamma_D} = g_D$ and $\nabla u(\mathbf{x}, t) \cdot \textbf{n}|_{\mathbf{x} \in \Gamma_N} = g_N$.
$\Gamma $ is the union of the boundaries for all the elements in the partition $\Omega_N$, and $\sigma$ is the penalty parameter depending on the dimension $d.$ The average and jump are defined as,
\begin{align}\label{sp5}
[q]=q^- \textbf{n}^- + q^+ \textbf{n}^+, \qquad & \{ q\} = \frac{1}{2} (q^-+q^+),  \nonumber \\
[\textbf{q}] = \textbf{q}^- \cdot \textbf{n}^-  + \textbf{q}^+ \cdot \textbf{n}^+ ,  \qquad & \{ \textbf{q}\} = \frac{1}{2}(\textbf{q}^- + \textbf{q}^+).
\end{align}
where $\textbf{n}$ is the unit normal. `-' and `+' represent that the directions of the vector point to interior and exterior at $e$ respectively.  If $e$ is part of the boundary, then we let $[q] = q \textbf{n}$ ($\textbf{n}$ is the outward unit normal) and $\{\textbf{q} \} = \textbf{q}$.

Depending on the choice of space $\bV$, various IPDG methods with distinct properties are obtained.  If   $\bV=\bV^k_N,$ we recover the IPDG scheme in \cite{grote2006discontinuous} on tensor-product meshes. If $\bV=\hat{\bV}^k_N,$ then we obtain  the sparse grid IPDG method. If $\bV$ is chosen adaptively as described in Section \ref{subsec:timeadapt}, we have the adaptive multiresolution scheme. Note that  besides the IPDG formulation,  other DG formulations can be used as well, such as the local DG method \cite{chou2014wave} and the energy-based DG method \cite{appelo2015new}. The main novelty of this work is the choice of the multiresolution polynomial space which is not tied specifically to the  weak formulation in use.

\medskip

For completeness, we now review some properties of the semi-discrete IPDG scheme (\ref{sp2}). Define the discrete energy of wave propagation  by
\begin{align}\label{se2}
E_h(t) := \frac{1}{2} \left \Vert \frac{\partial u_h}{\partial t}\right \Vert^2 +\frac{1}{2} B(u_h, u_h),
\end{align}
Then the stability inherently holds true since the bilinear form $B(\cdot, \cdot)$ is symmetric and coercive:
\begin{thm}[Energy stability  \cite{grote2006discontinuous}]
\label{thm:stable}
The discrete energy (\ref{se2}) is conserved by semi-discrete DG scheme (\ref{sp2})-(\ref{sp4}) when $f=0$ with periodic boundary condition for arbitrary choice of space including $\bV=\bV^k_N$ and $\bV=\hat{\bV}^k_N$.
\end{thm}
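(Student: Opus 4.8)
The plan is to show that $\frac{d}{dt}E_h(t)=0$ by differentiating the energy in time and exploiting the symmetry and bilinearity of $B(\cdot,\cdot)$ together with a judicious choice of test function. First I would compute
\[
\frac{d}{dt} E_h(t) = \int_\Omega (u_h)_t\,(u_h)_{tt}\, d\mathbf{x} + \frac{1}{2}\frac{d}{dt} B(u_h, u_h).
\]
The second term is handled purely algebraically: since $B$ is bilinear and symmetric --- a fact I would verify directly from \eqref{sp3}, where the volume term is manifestly symmetric, the two consistency terms interchange under $u_h \leftrightarrow v$, and the penalty term is symmetric --- the product rule gives $\frac{d}{dt}B(u_h,u_h) = 2 B(u_h, (u_h)_t)$. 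Hence
\[
\frac{d}{dt}E_h(t) = \int_\Omega (u_h)_t\,(u_h)_{tt}\, d\mathbf{x} + B(u_h, (u_h)_t).
\]

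Next I would return to the weak formulation \eqref{sp2} and choose the test function $v = (u_h)_t$. With $f=0$ and periodic boundary conditions the source functional \eqref{sp4} vanishes, $L \equiv 0$, so the scheme reads
\[
\int_\Omega (u_h)_{tt}\,(u_h)_t\, d\mathbf{x} + B(u_h, (u_h)_t) = 0.
\]
This is precisely the negative of the expression obtained for $\frac{d}{dt}E_h(t)$, so the two contributions cancel and $\frac{d}{dt}E_h(t)=0$, giving $E_h(t)=E_h(0)$ for all $t$.

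The one step that requires genuine justification --- and which I expect to be the crux --- is the admissibility of $v=(u_h)_t$ as a test function. This hinges on the fact that for the non-adaptive choices $\bV=\bV^k_N$ or $\bV=\hat{\bV}^k_N$ the approximation space is fixed in time, so $u_h(\cdot,t)\in \bV$ implies $(u_h)_t(\cdot,t)\in \bV$ by linearity of differentiation; thus $(u_h)_t$ is an allowable test function at every instant. (For a genuinely time-dependent adaptive space this argument would need modification, which is presumably why the statement restricts to fixed $\bV$.) I would also remark that the conservation identity relies only on the \emph{symmetry} of $B$, whereas coercivity --- available when the penalty parameter $\sigma$ is chosen sufficiently large --- is what additionally guarantees $E_h \ge 0$, so that energy conservation in turn furnishes a genuine a priori stability bound on $u_h$.
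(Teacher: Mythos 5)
Your proof is correct and follows exactly the reasoning the paper itself invokes: the paper gives no detailed proof, only the remark that conservation ``inherently holds true since the bilinear form $B(\cdot,\cdot)$ is symmetric and coercive,'' and your argument --- differentiating $E_h$, using bilinearity and symmetry to get $\frac{d}{dt}B(u_h,u_h)=2B(u_h,(u_h)_t)$, and testing \eqref{sp2} with $v=(u_h)_t$ (admissible because $\bV$ is fixed in time) --- is precisely the standard justification behind that remark, as in \cite{grote2006discontinuous}. Your closing observation that symmetry alone yields conservation while coercivity is what makes $E_h$ a genuine (nonnegative) stability quantity is also consistent with the paper's framing.
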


We then review some results in the error estimates  \cite{grote2006discontinuous}, and extend it to the sparse grid method with   $\bV=\hat{\bV}^k_N$ based on the approximation properties of the space $\hat{\bV}^k_N$ in \cite{guo2016transport}.   We use $\Vert \cdot \Vert$ to represent the standard $L^2$ norm on $\Omega$ or $\Omega_N,$ $\Vert \cdot \Vert_{L^2(\Gamma)}$ to represent the $L^2$ norm on the collection of the cell interfaces of the mesh $\Omega_N: \Gamma,$ and define the energy norm of a function $v\in H^2(\Omega_N)$ as
\begin{align}\label{se1}
 ||| v ||| ^2 :=  \int_{\Omega} |\nabla v|^2 \,d\bx \, + \sum_{\substack{e \in \Gamma}}h_N \int_{e} \left \{  \frac{\partial v}{\partial \bn} \right \}^2\,ds\, + \sum_{\substack{e \in \Gamma}}\frac{1}{h_N} \int_{e} [v]^2\,ds.
\end{align}

Some basic properties of the bilinear operator $B(\cdot, \cdot)$ are listed below.

\begin{lem}[Boundedness \cite{arnold1982interior, arnold2002unified}]
\label{lem:bound}
There exists a positive constant $C_b$, depending only on $C^*, \sigma$, such that
$$|B( w, v )| \le C_b |||w |||\cdot ||| v |||,   \quad \forall \,w, v \in H^2(\Omega_N).$$
\end{lem}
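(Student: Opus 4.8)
The plan is to estimate each of the four terms of the bilinear form \eqref{sp3} separately by the Cauchy--Schwarz inequality, matching every piece against one of the three contributions to the energy norm \eqref{se1}. Writing $B(w,v)=T_1-T_2-T_3+T_4$ according to the four integrals in \eqref{sp3}, I would first dispose of the volume term $T_1=\int_\Omega c^2\nabla w\cdot\nabla v\,d\bx$. Using the uniform upper bound $c^2(\bx)\le C^*$ together with Cauchy--Schwarz in $L^2(\Omega)$ gives $|T_1|\le C^*\,\|\nabla w\|\,\|\nabla v\|\le C^*\,|||w|||\,|||v|||$, since $\|\nabla v\|^2$ is exactly the first term of $|||v|||^2$.

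The core of the argument lies in the two consistency/symmetry terms $T_2=\sum_{e\in\Gamma}\int_e\{c^2\nabla w\}\cdot[v]\,ds$ and its mirror image $T_3$. Because $[v]$ points in the normal direction on each edge, only the normal component $\{c^2\nabla w\}\cdot\bn=\{c^2\,\partial w/\partial\bn\}$ enters, and with $c^2\le C^*$ one controls $|\{c^2\,\partial w/\partial\bn\}|\le C^*\,|\{\partial w/\partial\bn\}|$. The key bookkeeping step is to insert the neutral scaling factor $h_N^{1/2}\cdot h_N^{-1/2}=1$ so that the average-flux factor carries the weight $h_N$ and the jump factor the weight $h_N^{-1}$; an edge-wise Cauchy--Schwarz followed by a discrete Cauchy--Schwarz over the edge set $\Gamma$ then yields
\[
|T_2|\le C^*\Big(\sum_{e}h_N\int_e\{\partial w/\partial\bn\}^2\,ds\Big)^{1/2}\Big(\sum_{e}\tfrac{1}{h_N}\int_e[v]^2\,ds\Big)^{1/2}\le C^*\,|||w|||\,|||v|||,
\]
since the two sums are precisely the second and third terms of $|||w|||^2$ and $|||v|||^2$. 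The same estimate applies to $T_3$ with the roles of $w$ and $v$ exchanged. Finally, the penalty term $T_4=\sum_{e\in\Gamma}\frac{\sigma}{h_N}\int_e[w]\cdot[v]\,ds$ is the most direct: distributing the weight as $h_N^{-1/2}\cdot h_N^{-1/2}$ and applying Cauchy--Schwarz edge-by-edge and then over $\Gamma$ gives $|T_4|\le\sigma\,|||w|||\,|||v|||$, matching the jump term of the norm on both factors. Summing the four bounds produces $|B(w,v)|\le C_b\,|||w|||\,|||v|||$ with $C_b$ depending only on $C^*$ and $\sigma$ (e.g. $C_b=3C^*+\sigma$ suffices).

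I expect the only genuine obstacle to be the treatment of the variable coefficient inside the average in $T_2,T_3$: one must justify that $\{c^2\nabla w\}\cdot\bn$ is bounded by $C^*$ times $\{\partial w/\partial\bn\}$. This is immediate when $c$ is continuous across the mesh interfaces, so that $\{c^2\nabla w\}\cdot\bn=c^2\{\partial w/\partial\bn\}$; if $c$ is merely piecewise smooth with jumps at element boundaries, one should either assume the mesh resolves the discontinuities of $c$ or read the flux term in \eqref{se1} as the weighted average $\{c^2\partial v/\partial\bn\}$. Beyond this point the proof is entirely mechanical accounting of the powers of $h_N$, and, importantly, \emph{no} trace or inverse inequalities are required, because boundedness is asserted on all of $H^2(\Omega_N)$ rather than on the discrete space --- such inequalities are needed only later for the coercivity estimate, where the flux terms must be reabsorbed into the volume and penalty terms for $\sigma$ large enough.
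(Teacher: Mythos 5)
Your argument is the standard one and matches the route of the references the paper itself cites in lieu of a proof (\cite{arnold1982interior, arnold2002unified}): term-by-term Cauchy--Schwarz with the $h_N^{1/2}\cdot h_N^{-1/2}$ splitting, the volume term controlled by $c^2\le C^*$, the flux terms matched against the trace and jump pieces of $|||\cdot|||$, and the penalty term giving the $\sigma$ contribution, for a total $C_b=3C^*+\sigma$. Your observation that no trace or inverse inequalities are needed here --- precisely because the term $\sum_e h_N\int_e \{\partial w/\partial\bn\}^2 ds$ is built into the norm \eqref{se1}, and such inequalities enter only for the coercivity Lemma \ref{lem:stab} on the discrete space --- is also correct.

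The caveat you raise at the end is, however, a genuine issue in the paper's stated generality, and your first proposed remedy is exactly backwards. The inequality $|\{c^2\nabla w\}\cdot\bn|\le C^*|\{\partial w/\partial\bn\}|$ needed for $T_2,T_3$ holds precisely when $(c^2)^-=(c^2)^+$ on each face; this is the situation when the discontinuity surface of $c$ cuts transversally through element \emph{interiors} (then $c^2$ is continuous a.e.\ on every face), not when ``the mesh resolves the discontinuities of $c$.'' When the jump of $c^2$ is aligned with cell interfaces --- which is the paper's own setting in Example \ref{exam:03-dis-coeff} and the reason for the reformulation \eqref{eq:DG-semi-interp-discontinuous} --- take one-sided normal derivatives $(\partial w/\partial\bn)^\mp=\pm A$ on such a face: then $\{\partial w/\partial\bn\}=0$ while $\{c^2\nabla w\}\cdot\bn=\tfrac{A}{2}\left((c^2)^--(c^2)^+\right)\ne 0$, so $|||w|||$ gives no control of the flux term whatsoever. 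A boundary-layer construction (e.g.\ $w=\pm A(x_1-\tfrac12)e^{\mp(x_1-1/2)/\eps}$ on the two sides of the face $x_1=\tfrac12$, with $v$ the indicator of $\{x_1<\tfrac12\}$) gives $|||w|||\sim A\sqrt{\eps}$ and $|||v|||\sim\sqrt{2/h_N}$ while $|B(w,v)|\approx\tfrac{A}{2}|(c^2)^--(c^2)^+|$, so the ratio $|B(w,v)|/(|||w|||\cdot|||v|||)$ blows up as $\eps\to 0$ and no constant $C_b$ exists. Consequently, for face-aligned discontinuous coefficients the lemma holds only if the trace term in \eqref{se1} is replaced by the weighted average $\{c^2\,\partial v/\partial\bn\}$ (your second suggestion) or augmented by one-sided traces; as written, your proof --- and the lemma itself --- should be read under the assumption that $c^2$ does not jump across the interfaces of $\Omega_N$.
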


\begin{lem}[Coercivity \cite{arnold1982interior, arnold2002unified}]
\label{lem:stab}
When $\sigma$ is taken large enough, there exists a positive constant $C_s$ depending only on $C_*$, such that
$$B( v, v ) \ge C_s  ||| v |||^2,   \quad \forall \,  v \in \hat{\bV}_N^k.$$
\end{lem}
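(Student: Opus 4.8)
The plan is to establish coercivity by the standard diagonal-testing argument for symmetric interior penalty methods, with one key structural observation: every function in $\hat{\bV}_N^k$ is a piecewise tensor-product polynomial of degree $k$ on the \emph{uniform, isotropic} mesh $\Omega_N$ of cell size $h_N$, since $\hat{\bV}_N^k\subset\bV_N^k$. This places all sparse-grid functions inside the full uniform-grid DG space and reduces the analysis to the classical quasi-uniform situation, for which the relevant inverse (trace) estimates hold with constants depending only on $k$ and $d$.

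First I would set $u_h=v$ in \eqref{sp3}. The two cross terms coincide, giving the diagonal form
\begin{equation*}
B(v,v) = \int_\Omega c^2 |\nabla v|^2 \, d\bx \; - \; 2\sum_{e\in\Gamma}\int_e \{c^2\nabla v\}\cdot [v]\, ds \; + \; \sum_{e\in\Gamma}\frac{\sigma}{h_N}\int_e [v]^2\, ds .
\end{equation*}
The volume term is bounded below by $C_*\int_\Omega|\nabla v|^2$ using $c^2\ge C_*$, and the penalty term is already nonnegative, so the only obstruction to coercivity is the indefinite cross term. I would control it by Young's inequality with a free parameter $\delta>0$: on each face $[v]$ is normal-directed, so only the normal component of $\{c^2\nabla v\}$ enters, and using $c^2\le C^*$,
\begin{equation*}
2\left|\int_e\{c^2\nabla v\}\cdot[v]\,ds\right| \le \delta\,(C^*)^2\, h_N\int_e\left\{\frac{\partial v}{\partial\bn}\right\}^2 ds \; + \; \frac{1}{\delta h_N}\int_e [v]^2\, ds .
\end{equation*}

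Next I would absorb the face-gradient pieces into the volume term via the inverse trace inequality $h_N\int_e\{\partial v/\partial\bn\}^2\,ds \le C_{\mathrm{inv}}\int_{K}|\nabla v|^2\,d\bx$ on the cells $K$ adjacent to $e$, valid with $C_{\mathrm{inv}}=C_{\mathrm{inv}}(k,d)$ precisely because $v$ restricts to a $Q^k$ polynomial on each cube of $\Omega_N$. Summing over faces (each cell has a bounded number of faces) produces a bound of the form $\delta(C^*)^2 C_{\mathrm{inv}}'\int_\Omega|\nabla v|^2$. I would then fix $\delta$ so small that $\delta(C^*)^2 C_{\mathrm{inv}}'\le C_*/2$, retaining at least $\tfrac{C_*}{2}\int_\Omega|\nabla v|^2$, and afterwards take $\sigma$ large enough that $\sigma-1/\delta>0$, retaining a positive multiple of $\sum_e \tfrac{1}{h_N}\int_e[v]^2$. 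This yields $B(v,v)\ge C_s'\big(\int_\Omega|\nabla v|^2+\sum_e \tfrac{1}{h_N}\int_e[v]^2\big)$.

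Finally, the full energy-norm estimate follows by noting that the face-gradient term in $|||v|||$ is itself dominated by the volume term through the same inverse trace inequality, so the reduced two-term expression is equivalent to $|||v|||^2$ with constants depending only on $k,d$; this gives $B(v,v)\ge C_s|||v|||^2$. The main obstacle is justifying the \emph{uniformity} of $C_{\mathrm{inv}}$ across all mesh levels and across the anisotropic hierarchical structure of the sparse grid, and the resolution is exactly the embedding $\hat{\bV}_N^k\subset\bV_N^k$, which lets the standard scaling argument on the isotropic cubes of $\Omega_N$ apply verbatim and keeps the final constant $C_s$ dependent only on $C_*$ once $\sigma$ is chosen.
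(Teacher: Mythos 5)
Your proof is correct in substance, but note that the paper does not actually prove this lemma: it is stated as a quotation of the classical coercivity results of \cite{arnold1982interior, arnold2002unified}, with no in-paper argument to compare against. What you have written is precisely the standard diagonal-testing argument from those references, and your key structural observation --- that the embedding $\hat{\bV}_N^k\subset\bV_N^k$ places every sparse-grid function in the piecewise-$Q^k$ space on the uniform, isotropic mesh $\Omega_N$, so the inverse trace inequality applies cell by cell with a constant depending only on $k$ and $d$ --- is exactly the point that justifies transferring the classical result to the sparse-grid space, a point the paper leaves implicit in the citation. Your bookkeeping of constants also matches the statement: the threshold for ``$\sigma$ large enough'' absorbs the dependence on $C^*$ and the inverse constant, leaving $C_s$ to depend only on $C_*$ (and $k$, $d$).

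One technical slip should be repaired. Your Young's-inequality display bounds the cross term by $\delta\,(C^*)^2\, h_N\int_e\left\{\partial v/\partial\bn\right\}^2 ds$, but the underlying inequality $\left|\{c^2\nabla v\}\cdot\bn\right|^2\le (C^*)^2\left\{\partial v/\partial\bn\right\}^2$ is false when $c^2$ jumps across $e$, which the paper permits ($c$ is only piecewise smooth, and Examples 4.3 and 4.5 use discontinuous $c^2$): take $\partial v^-/\partial\bn=1$, $\partial v^+/\partial\bn=-1$ and unequal values of $c^2$ on the two sides, so the right-hand side vanishes while the left does not. The fix is standard and leaves your architecture intact: bound $\left|\{c^2\nabla v\}\cdot\bn\right|^2\le \tfrac12 (C^*)^2\bigl((\partial v^-/\partial\bn)^2+(\partial v^+/\partial\bn)^2\bigr)$ and apply the inverse trace inequality to each one-sided trace on its own cell. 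Since the face term $h_N\int_e\left\{\partial v/\partial\bn\right\}^2 ds$ appearing in $|||v|||^2$ is itself dominated by the same one-sided traces, your final norm-equivalence step goes through unchanged and the conclusion $B(v,v)\ge C_s|||v|||^2$ stands as claimed.
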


  Then we arrive at the following error estimate.
 \begin{thm}
[Error estimate in energy norm for sparse grid IPDG method] 
\label{thm:error}
Let $u$ be the solution of (\ref{eq1})-(\ref{eq2}) satisfying $u \in L^{\infty}(0, T; \mathcal{H}^{p+1}(\Omega))$, $u_t \in L^{\infty}(0, T; \mathcal{H}^{p+1}(\Omega))$, $u_{tt} \in L^1(0, T; \mathcal{H}^p(\Omega)).$ 
$u_h$ is the semi-discrete DG solution obtained by (\ref{sp2})-(\ref{sp4}) with $\bV=\hat{\bV}^k_N$ and the initial condition $u_h(0) = \mathbf{P}u_0$ and $(u_h)_t(0) = \mathbf{P}v_0$, where $\mathbf{P}$ denotes the $L^2$ projection of a function onto the space $\hat{\bV}^k_N$. Then for $k \ge 1$ and any $1 \le q \le \min\{p, k\} $, the error $e = u_h - u$ satisfies the estimation
\begin{align}
||e_t ||_{L^{\infty}(0, T; L^2(\Omega))} & + \sup_{t\in [0, T]}|||e ||| \le C (||e_t(0)|| + ||| e(0)|||)  \\\nonumber &+  C\abs{\log_2h_N}^dh_N^{q} \left(|u|_{L^{\infty}(0, T; \mathcal{H}^{q+1}(\Omega))} + T |u_t|_{L^{\infty}(0, T; \mathcal{H}^{q+1}(\Omega))}  + |u_{tt}|_{L^{\infty}(0, T; \mathcal{H}^{q}(\Omega))}  \right)
\end{align}  
where the dimension $d\ge 2.$ $|\cdot|_{\mathcal{H}^{q+1}(\Omega)}$ denotes mixed derivative norm of a function and was defined in \cite{guo2016transport}.  Here and below, $C$ denotes a generic constant that   does not depend on $h_N$ or the solution $u.$
\end{thm}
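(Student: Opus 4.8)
The plan is to adapt the energy-based \emph{a priori} analysis for the second-order wave equation from \cite{grote2006discontinuous}, feeding in the sparse-grid approximation properties of $\hat{\bV}^k_N$ established in \cite{guo2016transport}. First I would record consistency and Galerkin orthogonality: because the exact solution $u$ is smooth enough that its jumps across interior faces vanish, substituting $u$ into the weak form and integrating \eqref{eq1} against $v$ shows that $\int_\Omega u_{tt}v\,d\bx + B(u,v) = L(v)$ for every $v\in\hat{\bV}^k_N$, and subtracting \eqref{sp2} yields $\int_\Omega e_{tt}v\,d\bx + B(e,v)=0$ for all $v\in\hat{\bV}^k_N$. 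I then introduce the elliptic (Ritz) projection $\mathcal{R}_h u\in\hat{\bV}^k_N$ defined by $B(\mathcal{R}_h u - u,v)=0$ for all $v\in\hat{\bV}^k_N$, which is well defined since $B$ is coercive (Lemma~\ref{lem:stab}) and bounded (Lemma~\ref{lem:bound}) on $\hat{\bV}^k_N$, and I split $e=\theta+\rho$ with $\theta:=u_h-\mathcal{R}_h u$ and $\rho:=\mathcal{R}_h u-u$. Since $B(\rho,v)=0$, the orthogonality collapses to $\int_\Omega\theta_{tt}v\,d\bx + B(\theta,v) = -\int_\Omega\rho_{tt}v\,d\bx$.

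For the discrete component $\theta$ I would run the standard energy estimate: choosing $v=\theta_t$ and using the symmetry of $B$ gives
\[
\frac{d}{dt}\Big(\tfrac12\|\theta_t\|^2+\tfrac12 B(\theta,\theta)\Big) = -\int_\Omega\rho_{tt}\,\theta_t\,d\bx \le \|\rho_{tt}\|\,\|\theta_t\|.
\]
By coercivity and boundedness the quantity $\tfrac12\|\theta_t\|^2+\tfrac12 B(\theta,\theta)$ is equivalent to $\|\theta_t\|^2+|||\theta|||^2$; integrating in time and applying a Gronwall-type inequality then controls $\sup_{[0,T]}(\|\theta_t\|+|||\theta|||)$ by the initial energy together with $\int_0^T\|\rho_{tt}\|\,ds$. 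A triangle inequality, $|||e|||\le|||\theta|||+|||\rho|||$ and $\|e_t\|\le\|\theta_t\|+\|\rho_t\|$, together with $\theta(0)=e(0)-\rho(0)$, reduces the whole estimate to the projection errors $|||\rho|||$, $\|\rho_t\|$ and $\int_0^T\|\rho_{tt}\|\,ds$, plus the stated initial contribution $C(\|e_t(0)\|+|||e(0)|||)$.

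The last and most substantial step is to bound these projection errors in the sparse-grid space. Lemmas~\ref{lem:bound}--\ref{lem:stab} give the C\'ea-type estimate $|||\rho|||\le C\inf_{v\in\hat{\bV}^k_N}|||u-v|||$, and since $\mathcal{R}_h$ commutes with $\partial_t$ the analogue holds for $u_t$ and $u_{tt}$; the task is therefore to bound the best energy-norm approximation of $u$, $u_t$, $u_{tt}$ by $\hat{\bV}^k_N$. Inserting the sparse-grid approximation estimates of \cite{guo2016transport} produces the characteristic polylogarithmic factor $|\log_2 h_N|^d$ together with the mixed-derivative norms, while the differing regularity indices ($\mathcal{H}^{q+1}$ for $u,u_t$ versus $\mathcal{H}^q$ for $u_{tt}$) and the factor $T$ weighting the $u_t$ contribution come from whether an $L^2$ or an energy-norm approximation estimate is invoked and from bounding the resulting time integrals by $T\sup_{[0,T]}$.

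I expect the genuine obstacle to be this final approximation step rather than the energy machinery, which is a direct adaptation of \cite{grote2006discontinuous}. The energy norm \eqref{se1} contains the face terms $\sum_{e}h_N\int_e\{\partial v/\partial\bn\}^2\,ds$ and $\sum_{e}h_N^{-1}\int_e[v]^2\,ds$, which are not directly covered by the $L^2$ and broken-$H^1$ bounds recorded in \cite{guo2016transport}; reproducing the sharp $|\log_2 h_N|^d$ dependence in $|||\cdot|||$ requires combining trace inequalities on the anisotropic hierarchical cells with the tensor-product multiwavelet structure, and this is the step I would treat most carefully.
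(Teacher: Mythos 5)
Your energy framework is sound as far as it goes, but your route is genuinely different from the paper's, and it contains a gap that the paper's choices are specifically designed to avoid. The paper never introduces a Ritz projection: it splits the error with the $L^2$ projection $\mathbf{P}$, works with a lifted auxiliary bilinear form $\hat{B}$ (built from lifting operators $\mathcal{L}_c$, following \cite{grote2006discontinuous}) so that boundedness and coercivity hold on all of $\bV(h)=H^1(\Omega)+\hat{\bV}^k_N$, and then controls the resulting consistency residual $r_h(u,v)=\sum_{e\in\Gamma}\int_e [v]\cdot\{c^2\nabla u - c^2\mathbf{P}(\nabla u)\}\,ds$ by a trace inequality, the sparse-grid projection estimates of \cite{guo2016transport}, and an integration by parts in time.

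The gap in your argument is the term $\int_0^T\norm{\rho_{tt}}\,dt$ with $\rho=\mathcal{R}_h u-u$. To reach the stated rate $\abs{\log_2 h_N}^d h_N^{q}$ you need an \emph{optimal $L^2(\Omega)$} estimate for the Ritz-projection error of $u_{tt}$ under the assumed regularity $u_{tt}\in\mathcal{H}^{q}(\Omega)$ only. C\'ea's lemma controls $\rho_{tt}$ only in the energy norm $|||\cdot|||$, which contains $\nabla \rho_{tt}$; an energy-norm rate of $h_N^{q}$ would require $u_{tt}\in\mathcal{H}^{q+1}$, so under the stated hypotheses C\'ea yields only $h_N^{q-1}$ (and passing from $|||\cdot|||$ to $\norm{\cdot}$ additionally needs a broken Poincar\'e inequality; note also that for periodic boundary conditions $|||\cdot|||$ is only a seminorm, so $\mathcal{R}_h$ is defined only modulo constants). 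The classical fix, an Aubin--Nitsche duality argument, is exactly what is unavailable here: the dual solution has only isotropic $H^2$ regularity, not the mixed-derivative regularity $\mathcal{H}^2$ needed for sparse-grid approximation, so no optimal $L^2$ estimate for the Ritz projection onto $\hat{\bV}^k_N$ is known (none is proved in \cite{wang2016elliptic} or \cite{guo2016transport}). This is precisely why the paper uses the $L^2$ projection instead: $\norm{(u-\mathbf{P}u)_{tt}}$ is bounded directly and optimally by Lemma 3.2 of \cite{guo2016transport}, at the price of losing Galerkin orthogonality, and the extra terms this creates, $\hat{B}(e,(u-\mathbf{P}u)_t)$ and $r_h(u,(\mathbf{P}u-u_h)_t)$, are what the lifted form, the trace inequality, and the temporal integration by parts are there to handle. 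Your closing remark correctly flags the face terms of $|||\cdot|||$ as needing trace inequalities, but that issue arises in the paper's proof too; the step where your Ritz-projection route actually fails is the $L^2$ bound on $\rho_{tt}$.
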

\begin{proof} Following \cite{grote2006discontinuous}, we   let $\bV(h) = H^1(\Omega) + \hat{\bV}^k_N, $ and for any $v \in \bV(h),$ we  define the lifted function $\mathcal{L}_c(v) \in (\hat{\bV}^k_N)^d $ by requiring 
\begin{align}\label{en000_0}
\int_{\Omega} \mathcal{L}_c(v) \cdot w d \textbf{x} = \sum_{e \in \Gamma} \int_e [v] \cdot \{c^2w \} ds, \qquad w\in (\hat{\bV}^k_N)^d.
\end{align}
Using similar arguments as in Lemma 4.3 in \cite{grote2006discontinuous}, we conclude the lifting operator $\mathcal{L}_c$ exists and is stable in the DG norm. Then the auxiliary bilinear form can be introduced as
\begin{align}\label{en0_0}
\hat{B}(u, v) & = \int_{\Omega} c^2 \nabla u \cdot \nabla v \ d\textbf{x}  - \int_{\Omega}\mathcal{L}_c(u) \cdot \nabla{v} \ ds  - \int_{\Omega} \mathcal{L}_c(v) \cdot \nabla{u} \ ds  \\ \nonumber & +  \sum_{e \in \Gamma} \frac{\sigma}{h_N} \int_e [u] \cdot [v] \ ds.
% \hat{B}(u, v) & = \int_{\Omega} c^2 \nabla u \cdot \nabla v dx +\sum_{e \in \Gamma} \int_e [v] \cdot \{c^2\nabla u \} ds - \sum_{e \in \Gamma} \int_e [v] \cdot \{c^2 \mathbf{P} (\nabla u)\} ds, 
\end{align}
$\hat{B}(u, v)$ can be viewed as an extension of the wave operator and bilinear form $B(u, v)$ to the space $\bV(h) \times \bV(h)$, since
\begin{align}\label{en00_0}
 & \hat{B}(u, v)  =  B(u, v) \quad  \text{on} \quad \hat{\bV}^k_N \times \hat{\bV}^k_N, \\
 & \hat{B}(u, v)  =  \int_{\Omega} c^2 \nabla u \cdot \nabla v d\textbf{x} - \int_{\partial \Omega} (c^2 \nabla u) v\cdot \bn ds \quad  \text{on} \quad H^1(\Omega) \times H^1(\Omega). 
\end{align}
Moreover, it can be verified that  
 \begin{align}\label{en3_0}
& \hat{B}(u, v) \le C_b ||| u ||| \cdot ||| v |||,  \\ \nonumber
& \hat{B}(u, u) \ge C_s ||| u |||^2.
\end{align}
%where $C_b$ and $C_s$ are constants only depending on $\sigma$, $C_*$ and $C^*$.
Similar to Lemma 4.5 in \cite{grote2006discontinuous}, $e$ satisfies the equation 
\begin{align}\label{en0}
(e_{tt}, v) + \hat{B}(e, v) = r_h(u,v), \quad \forall v \in \hat{\bV}^k_N
\end{align}
where
\begin{align}\label{en0_01}
 r_h(u, v) = \sum_{e \in \Gamma} \int_e [v] \cdot \{c^2\nabla u -c^2 \mathbf{P}(\nabla u) \} ds. 
\end{align}
Therefore, we will have
\begin{align}\label{en1}
\frac{1}{2} \frac{d}{dt} [||e_t||^2 +\hat{B}(e, e)] & = (e_{tt}, e_t) + \hat{B}(e, e_t) \\ \nonumber
& = (e_{tt}, (u-\mathbf{P}u)_t) + \hat{B}(e, (u-\mathbf{P}u)_t) + r_h(u, (\mathbf{P}u-u_h)_t).
\end{align}
 Integrating (\ref{en1}) over $[0, s]$ for any $s \in [0, T]$ yields
 \begin{align}\label{en2}
 \frac{1}{2}||e_t(s)||^2 & +\frac{1}{2}\hat{B}(e(s), e(s))  = \frac{1}{2}||e_t(0)||^2 +\frac{1}{2}\hat{B}(e(0), e(0)) + \int^s_0 (e_{tt}, (u-\mathbf{P}u)_t) dt \\ \nonumber &
                             +\int^s_0 \hat{B}(e, (u-\mathbf{P}u)_t) dt + \int^s_0 r_h(u, (\mathbf{P}u-u_h)_t) dt. 
 \end{align} 
 Because 
 \begin{align}\label{en3}
 \int^s_0 (e_{tt}, (u-\mathbf{P}u)_t) dt  = - \int^s_0 (e_t, (u-\mathbf{P}u)_{tt}) dt + [(e_t, (u-\mathbf{P}u)_t)]^{t=s}_{t=0},
\end{align} 
and the inequalities (\ref{en3_0}) hold,
%\begin{align}\label{en3_0}
%& \hat{B}(e(s), e(s)) \ge C_s ||| e(s) |||^2,  \\
%& \hat{B}(e(0), e(0)) \le C_b ||| e(0) |||,
%\end{align}
 together with Holder's inequalities, we will have  
 \begin{align}\label{en5}
 \frac{1}{2}||e_t(s)||^2  +\frac{1}{2} C_s|||e(s)|||^2  & \le \frac{1}{2}||e_t(0)||^2 +\frac{1}{2} C_b ||| e(0)|||^2  \\ \nonumber  
 &+ || e_t||_{L^{\infty}(0, T; L^2(\Omega))}( ||(u-\mathbf{P}u)_{tt}||_{L^1(0, T; L^2(\Omega))}  + 2||(u-\mathbf{P}u)_t ||_{L^{\infty}(0, T; L^2(\Omega))})  \\ \nonumber
&  + C_b T |||e ||| \cdot  |||(u-\mathbf{P}u)_t||| \\ \nonumber
& + \left|\int^T_0 r_h(u, (\mathbf{P}u-u_h)_t) dt \right|.
 \end{align}
Since the inequality (\ref{en5}) holds for any $s \in [0, T]$, taking the maximum on $[0, T]$ will result in  
\begin{align}\label{en6}
||e_t||^2_{L^{\infty}(0, T; L^2(\Omega))} +  C_s ||e||^2_{L^{\infty}(0, T; \bV(h))}  \le ||e_t(0)||^2 + C_b ||| e(0)|||^2 + T_1 +T_2 +T_3
\end{align}
where the short-hand notation $||e||_{L^{\infty}(0, T; \bV(h))} :=\sup_{t\in[0, T]} |||e |||$ is introduced, and
\begin{align}\label{en7}
& T_1 = 2 || e_t||_{L^{\infty}(0, T; L^2(\Omega))}( ||(u-\mathbf{P}u)_{tt}||_{L^1(0, T; L^2(\Omega))} + 2||(u-\mathbf{P}u)_t ||_{L^{\infty}(0, T; L^2(\Omega))}) \\ \nonumber
& T_2 = 2C_b T  |||e ||| \cdot   |||(u-\mathbf{P}u)_t||| \\ \nonumber
& T_3 = 2  \left|\int^T_0 r_h(u, (\mathbf{P}u-u_h)_t) dt \right|.
\end{align}
Using the geometric-arithmetic mean inequality, and Lemma 3.2 in \cite{guo2016transport}, we conclude
\begin{align}\label{en8}
 T_1 & \le \frac{1}{2} ||e_t||^2_{L^{\infty}(0, T; L^2(\Omega))} + 2(|| (u-\mathbf{P}u)_{tt} ||_{L^1(0, T; L^2(\Omega))} + 2||(u-\mathbf{P}u)_t ||_{L^{\infty}(0, T; L^2(\Omega))})^2 \\ \nonumber
        & \le  \frac{1}{2} ||e_t||^2_{L^{\infty}(0, T; L^2(\Omega))} + 4 || (u-\mathbf{P}u)_{tt} ||_{L^1(0, T; L^2(\Omega))}^2 + 16 ||(u - \mathbf{P}u)_t ||_{L^{\infty}(0, T; L^2(\Omega))}^2 \\ \nonumber
        & \le  \frac{1}{2} ||e_t||^2_{L^{\infty}(0, T; L^2(\Omega))} + C \abs{\log_2h_N}^{2d}h_N^{2q}(|u_{tt}|^2_{L^\infty(0, T; \mathcal{H}^{q}(\Omega))} +h_N^2 |u_t|^2_{L^2(0, T;  \mathcal{H}^{q+1}(\Omega))}). 
\end{align}
Similarly
\begin{align}\label{en9}
T_2 & \le \frac{1}{4}C_s ||| e |||^2 + 4\frac{C^2_b}{C_s} T^2  ||| (u-\mathbf{P}u)_t |||^2 \\ \nonumber
        & \le \frac{1}{4}C_s ||e||_{L^{\infty}(0, T; \bV(h))} ^2 + CT^2|\log_2h_N|^{2d}h_N^{2q+2} |u_t|^2_{L^2(0, T; \mathcal{H}^{q+1}(\Omega))}.
\end{align}
We then start to bound the term $T_3$. From (\ref{en0_01}), we can derive 
\begin{align}\label{en10}
|r_h(u, v) | & = |\sum_{e \in \Gamma} \int_e [v] \cdot \{ c^2 \nabla u - c^2 \mathbf{P}(\nabla u) \} ds | \\ \nonumber
                 & \le (\sum_{e \in \Gamma} \int_e \frac{\sigma}{h_N} [v]^2 ds )^{\frac{1}{2}} (\sum_{e \in \Gamma } \int_e  \frac{h_N}{\sigma} | c^2\nabla u - c^2 \mathbf{P}(\nabla u) |^2 ds)^{\frac{1}{2}} \\ \nonumber
                 & \le C |||v||| (\sum_{K \in \Omega_N} h_N || \nabla u - \mathbf{P}(\nabla u)  ||_{\partial K}^2)^{\frac{1}{2}},
\end{align}
with $h_N = \frac{1}{2^N}$,  using trace inequality and  Lemma 3.2 in \cite{guo2016transport}, we have
\begin{align}\label{en10_1}
|r_h(u, v) |  \le C||v||_{L^{\infty}(0, T; \bV(h))} \cdot \abs{\log_2h_N}^d h_N^{q} |u|_{L^{\infty}(0, T; \mathcal{H}^{q+1}(\Omega))}.
\end{align}

Therefore, 
\begin{align}\label{en11}
\left|\int^T_0 r_h(u, v_t) \right| & =   \left| -\int^T_0 r_h(u_t, v) dt + r_h(u,v) |^{t=T}_{t=0} \right| \\ \nonumber
& \le C T || v ||_{L^{\infty}(0, T; \bV(h))} \abs{\log_2h_N}^d h_N^{q} |u_t|_{L^{\infty}(0,T; \mathcal{H}^{q+1}(\Omega))} \\ \nonumber 
&+ 2C || v ||_{L^{\infty}(0, T; \bV(h))} \abs{\log_2h_N}^d h_N^{q} |u|_{L^{\infty}(0,T; \mathcal{H}^{q+1}(\Omega))}.
\end{align}
Denote $\mathcal{R} =T |u_t|_{L^{\infty}(0,T; \mathcal{H}^{q+1}(\Omega))} + 2 |u|_{L^{\infty}(0,T; \mathcal{H}^{q+1}(\Omega))}$, we will have 
\begin{align}\label{en12}
T_3 & \le 2C \mathcal{R}\abs{\log_2h_N}^d h_N^{q } || \mathbf{P}u-u_h  ||_{L^{\infty}(0, T; \bV(h))} \\ \nonumber
       & \le 2C \mathcal{R} \abs{\log_2h_N}^d h_N^{q} \left[ ||e ||_{L^{\infty}(0, T; \bV(h)) } + || u-\mathbf{P}u ||_{L^{\infty}(0, T;\bV(h)) } \right] \\ \nonumber
       & \le  \frac{1}{4} C_s || e ||^2_{L^{\infty}(0, T; \bV(h))} + C \abs{\log_2h_N}^{2d}h_N^{2q} \left[ |u|^2_{L^{\infty}(0,T; H^{q+1}(\Omega))} +\mathcal{R}^2 \right].
\end{align}
Together with (\ref{en6}) and the estimates for $T_1$, $T_2$ and $T_3$, we arrive at the estimate 
\begin{align}\label{en13}
\frac{1}{2}||e_t||^2_{\infty} & + \frac{1}{2}C_s \sup_{t\in [0, T]}|||e |||^2  \le ||e_t(0)||^2 + C ||| e(0)|||^2 \\ \nonumber &+ C \abs{\log_2h_N}^{2d}h_N^{2q}(|u_{tt}|^2_{L^{\infty}(0, T; \mathcal{H}^{q}(\Omega))}   + T^2 |u_t|^2_{L^{\infty}(0, T; \mathcal{H}^{q+1}(\Omega))} +|u|^2_{L^{\infty}(0, T; \mathcal{H}^{q+1}(\Omega))} ),
\end{align}
and this completes the proof. 
\end{proof}

\subsection{Semi-discrete scheme with multiresolution interpolation}
\label{sec:schemeinter}

%To evaluate the integrals over elements and edges more efficiently with a cost proportional to the DoF of the underlying finite element space, 
To treat variable coefficient case, we follow the idea in \cite{shen2010efficient,huang2019adaptive} and interpolate the functions $c^2u_h$ and $c^2\nabla u_h$ (or $(c^2)^-\nabla u_h$ and $(c^2)^+\nabla u_h$ in the case when $c^2(\bx)$ contains discontinuity on the cell interfaces of $\Omega_N$) by using the multiresolution Lagrange interpolation  discussed in Section \ref{subsec:interp-basis}. For simplicity of discussion, we only focus on the homogeneous Dirichlet boundary condition with no source term. However, similar results can be established for mixed boundary conditions and also with source terms.

We first assume  $c=c(\bx)$ is continuous. In this case, we can reformulate \eqref{sp2} into an equivalent form
\begin{align}
B(u_h, v) & = \int_{\Omega} c^2 \nabla u_h \cdot \nabla v \ d\textbf{x}  - \sum_{e\in \Gamma } \int_{e} \{c^2 \nabla u_h \} \cdot [v] \ ds  -\sum_{e \in \Gamma}\int_{e} \{\nabla v \} \cdot [c^2u_h] \ ds   \nonumber \\
& +  \sum_{e \in \Gamma} \frac{\sigma}{h_N} \int_e [u_h] \cdot [v] \ ds
\end{align}
then the scheme is implemented by the modified operator with interpolation
\begin{align}\label{eq:DG-semi-interp-discontinuous-continuous}
\tilde{B}(u_h, v) & = \int_{\Omega} \mathcal{I}_h(c^2 \nabla u_h) \cdot \nabla v \ d\textbf{x}  - \sum_{e\in \Gamma } \int_{e} \{\mathcal{I}_h(c^2 \nabla u_h) \} \cdot [v] \ ds  -\sum_{e \in \Gamma}\int_{e} \{\nabla v \} \cdot [\mathcal{I}_h(c^2 u_h)] \ ds   \nonumber \\
& +  \sum_{e \in \Gamma} \frac{\sigma}{h_N} \int_e [u_h] \cdot [v] \ ds
\end{align}
Here, $\mathcal{I}_h(\cdot)$ denote the interpolation operator defined in Section \ref{subsec:interp-basis} with interpolation parameter $M$ to be specified later. %and we perform the interpolation for the terms $c^2 \nabla u_h$ and $c^2 u_h$.

If $c=c(\bx)$ is discontinuous along the cell interface, then some special care has to be taken for the third term $\sum_{e \in \Gamma}\int_{e} \{c^2\nabla v \} \cdot [u_h] \ ds$. We first reformulate it into another form:
\begin{align*}
	& \{c^2 \nabla v \} \cdot [u_h] \\
	={}& \frac{1}{2} \brac{ (c^2\nabla v)^- + (c^2\nabla v)^+ }\cdot(u_h^-\textbf{n}^- + u_h^+\textbf{n}^+)		\nonumber \\ 
	={}& \frac{1}{2} \brac{ (c^2\nabla v)^-\cdot u_h^-\textbf{n}^-  + (c^2\nabla v)^-\cdot u_h^+\textbf{n}^+  + (c^2\nabla v)^+\cdot u_h^-\textbf{n}^- + (c^2\nabla v)^+\cdot u_h^+\textbf{n}^+}	  \\ \nonumber
	={}& \frac{1}{2}\brac{ (c^2)^-u_h^-\textbf{n}^- \cdot (\nabla v)^- + (c^2)^-u_h^+\textbf{n}^+ \cdot (\nabla v)^- + (c^2)^+ u_h^-\textbf{n}^- \cdot (\nabla v)^+ + (c^2)^+ u_h^+\textbf{n}^+ \cdot (\nabla v)^+ } \\ \nonumber
	={}& \frac{1}{2}((c^2)^-u_h^-\textbf{n}^- + (c^2)^-u_h^+\textbf{n}^+) \cdot (\nabla v)^-  + \frac{1}{2} ((c^2)^+ u_h^-\textbf{n}^- + (c^2)^+ u_h^+\textbf{n}^+) \cdot (\nabla v)^+ \\
	={}& \frac{1}{2}[(c^2)^- u_h]\cdot(\nabla v)^- + \frac{1}{2}[(c^2)^+  u_h]\cdot(\nabla v)^+.
\end{align*}
Here $[(c^2)^- u_h]:=((c^2)^-u_h^-\textbf{n}^- + (c^2)^-u_h^+\textbf{n}^+)$ and $[(c^2)^+ u_h]:=((c^2)^+u_h^-\textbf{n}^- + (c^2)^+u_h^+\textbf{n}^+)$. Now the bilinear form \eqref{sp2} is rewriten into
\begin{align*}
B(u_h, v) & = \int_{\Omega} c^2 \nabla u_h \cdot \nabla v \ d\textbf{x}  - \sum_{e\in \Gamma } \int_{e} \{c^2 \nabla u_h \} \cdot [v] \ ds \\ \nonumber & -\sum_{e \in \Gamma}\int_{e} \brac{\frac{1}{2}[(c^2)^- u_h]\cdot(\nabla v)^- + \frac{1}{2}[(c^2)^+ u_h]\cdot(\nabla v)^+} \ ds   +  \sum_{e \in \Gamma} \frac{\sigma}{h_N} \int_e [u_h] \cdot [v] \ ds
\end{align*}
and then the interpolation operator is performed on $c^2\nabla u_h$, $(c^2)^- u_h$ and also $(c^2)^+ u_h,$ which gives:
\begin{align}\label{eq:DG-semi-interp-discontinuous}
\tilde{B}(u_h, v) & = \int_{\Omega} \mathcal{I}_h(c^2 \nabla u_h) \cdot \nabla v \ d\textbf{x}  - \sum_{e\in \Gamma } \int_{e} \{\mathcal{I}_h(c^2 \nabla u_h) \} \cdot [v] \ ds \\ \nonumber 
& -\sum_{e \in \Gamma}\int_{e} \brac{ \frac{1}{2}[\mathcal{I}_h((c^2)^- u_h)]\cdot(\nabla v)^- + \frac{1}{2}[\mathcal{I}_h((c^2)^+ u_h)]\cdot(\nabla v)^+} \ ds   +  \sum_{e \in \Gamma} \frac{\sigma}{h_N} \int_e [u_h] \cdot [v] \ ds.
\end{align}

Following \cite{DG4}, we can now write the DG scheme with interpolation \eqref{eq:DG-semi-interp-discontinuous} into the semi-discrete form as
\begin{equation}
	\frac{d^2u_h}{dt^2} = L_h(u_h),
\end{equation}
where $L_h(u)$ is an operator onto $\bV$ which is a discrete approximation of $-\nabla \cdot (c^2(\mathbf{x}) \nabla u )$ and satisfies
\begin{align}\label{eq:def-Lh}
	\sum_{K\in {\Omega_N}}\int_{K} L_h(u_h)v_h  \ d\textbf{x}&= -  \int_{\Omega} \mathcal{I}_h(c^2 \nabla u_h) \cdot \nabla v_h \ d\textbf{x}  + \sum_{e\in \Gamma } \int_{e} \{\mathcal{I}_h(c^2 \nabla u_h) \} \cdot [v_h] \ ds \nonumber \\ 
& + \sum_{e \in \Gamma}\int_{e} \brac{ \frac{1}{2}[\mathcal{I}_h((c^2)^- u_h)]\cdot(\nabla v_h)^- + \frac{1}{2}[\mathcal{I}_h((c^2)^+ u_h)]\cdot(\nabla v_h)^+} \ ds   \nonumber \\ 
& - \sum_{e \in \Gamma} \frac{\sigma}{h_N} \int_e [u_h] \cdot [v_h] \ ds
\end{align}
for any $v_h\in \bV$.

To preserve the accuracy of the original DG scheme,   interpolation operator $\mathcal{I}_h(\cdot)$ needs to reach certain accuracy. 
Using  similar   techniques as in \cite{DG4,huang2017quadrature}, we have the following proposition on local truncation error of the sparse grid method with $\bV=\hat{\bV}^k_N.$ We only discuss the case when $c(\bx)$ is discontinuous, since the similar approach can be applied when $c(\bx)$ is continuous.
\begin{prop}[Local truncation error analysis]%[Accuracy of semi-discrete sparse grid DG scheme with interpolation]
\label{prop:accurate-interp}
	%Assume that the sparse DG finite element space has polynomials up to degree $k,$ 
	If the interpolation operator $\mathcal{I}_h$ in \eqref{eq:DG-semi-interp-discontinuous} has the accuracy of order $\abs{\log_2h_N}^{d}h_N^{k+3}$ for sufficiently smooth functions, then the local truncation error of the semi-discrete DG scheme with interpolation \eqref{eq:DG-semi-interp-discontinuous} is of order $\abs{\log_2h_N}^{d}h_N^{k+1}$. To be more precise, for sufficiently smooth function $u$, the sparse grid DG method with interpolation \eqref{eq:DG-semi-interp-discontinuous} has the truncation error:
	\begin{equation}\label{eq:truncation-sparse}
		\norm{L_h(u) + \nabla \cdot (c^2(\mathbf{x}) \nabla u )}_{L^2(\Omega)}\le C \abs{\log_2h_N}^{d}h_N^{k+1}.
	\end{equation}
	Here, we use  $C$ to denote any generic constant that may depend on the solution $u$ and $c(\mathbf{x}),$ but does not depend on $N.$
\end{prop}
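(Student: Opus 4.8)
The plan is to compare the fully interpolated operator $L_h$ with the interior penalty operator obtained by deleting every occurrence of $\mathcal{I}_h(\cdot)$ in \eqref{eq:def-Lh}; call the latter $L_h^{\mathrm{ex}}$. I would then split the truncation error as
\[
L_h(u) + \nabla\cdot(c^2\nabla u)
= \underbrace{\big(L_h(u)-L_h^{\mathrm{ex}}(u)\big)}_{\text{interpolation-induced error}}
+ \underbrace{\big(L_h^{\mathrm{ex}}(u)+\nabla\cdot(c^2\nabla u)\big)}_{\text{IPDG consistency error}},
\]
and bound the two contributions separately in $L^2(\Omega)$. The second term reduces to a standard projection estimate, whereas the first is where the hypothesis on the interpolation order is consumed.

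For the consistency term, I would test against an arbitrary $v_h\in\hat{\bV}_N^k$ and integrate the volume term $-\int_\Omega c^2\nabla u\cdot\nabla v_h$ by parts cellwise. For the (piecewise smooth) exact solution all interface contributions drop out: the jump $[u]$ vanishes, the reformulated jumps $[(c^2)^{\pm}u]=(c^2)^{\pm}u(\textbf{n}^-+\textbf{n}^+)=\mathbf{0}$ vanish, and physical flux continuity gives $[c^2\nabla u]=0$, so the surviving flux average exactly cancels the second term of \eqref{eq:def-Lh}. Consequently $L_h^{\mathrm{ex}}(u)$ coincides with the $L^2$ projection $\mathbf{P}\big(\nabla\cdot(c^2\nabla u)\big)$ onto $\hat{\bV}_N^k$, and the consistency error is precisely the sparse-grid projection error of $\nabla\cdot(c^2\nabla u)$. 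Since $c$ is smooth on each cell (its discontinuities lying on the interfaces of $\Omega_N$) and $u$ is sufficiently smooth, Lemma 3.2 in \cite{guo2016transport} bounds this by $C\abs{\log_2 h_N}^{d}h_N^{k+1}$.

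For the interpolation-induced term, set $\eta:=\mathcal{I}_h(c^2\nabla u)-c^2\nabla u$ and $\zeta^{\pm}:=\mathcal{I}_h((c^2)^{\pm}u)-(c^2)^{\pm}u$. Subtracting the two operator definitions, the penalty term cancels and
\[
\sum_{K\in\Omega_N}\int_{K}\big(L_h(u)-L_h^{\mathrm{ex}}(u)\big)v_h
= -\int_{\Omega}\eta\cdot\nabla v_h
+ \sum_{e\in\Gamma}\int_e\{\eta\}\cdot[v_h]
+ \tfrac12\sum_{e\in\Gamma}\int_e\big([\zeta^-]\cdot(\nabla v_h)^- + [\zeta^+]\cdot(\nabla v_h)^+\big).
\]
I would then choose $v_h=L_h(u)-L_h^{\mathrm{ex}}(u)$, apply Cauchy--Schwarz on every term, and strip the derivatives and traces acting on the test function using the polynomial inverse inequality $\norm{\nabla v_h}_{K}\le Ch_N^{-1}\norm{v_h}_{K}$ together with the trace inequality $\norm{w}_{\partial K}\le C\big(h_N^{-1/2}\norm{w}_{K}+h_N^{1/2}\norm{\nabla w}_{K}\big)$; cancelling one factor of $\norm{L_h(u)-L_h^{\mathrm{ex}}(u)}$ then yields the desired $L^2$ bound.

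The hard part is the careful bookkeeping of the powers of $h_N$ in the last face term $\tfrac12\sum_e\int_e[\zeta^{\pm}]\cdot(\nabla v_h)^{\pm}$, which is the one that dictates the hypothesis. Here the trace of $\nabla v_h$ costs $h_N^{-1/2}$ (trace) times $h_N^{-1}$ (inverse), and the trace of $\zeta^{\pm}$ costs a further $h_N^{-1/2}$, so this term is controlled by $Ch_N^{-2}\norm{\zeta^{\pm}}\,\norm{v_h}$ (the companion $h_N^{1/2}\norm{\nabla\zeta^{\pm}}$ contribution from the trace estimate is of the same order by the one-order-weaker $H^1$ interpolation bound). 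The volume term and the $\{\eta\}\cdot[v_h]$ term lose only a single power of $h_N$ and are therefore strictly higher order. Inserting the assumed accuracy $\norm{\zeta^{\pm}}\le C\abs{\log_2 h_N}^{d}h_N^{k+3}$ gives $\norm{L_h(u)-L_h^{\mathrm{ex}}(u)}\le C\abs{\log_2 h_N}^{d}h_N^{k+1}$, explaining exactly why the interpolation must be two orders more accurate than the target truncation error. Combining this with the consistency estimate establishes \eqref{eq:truncation-sparse}.
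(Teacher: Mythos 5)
Your proposal is correct and follows essentially the same route as the paper: your splitting into interpolation-induced error plus IPDG consistency error is exactly the paper's $e_1+e_2$ decomposition, since your $L_h^{\mathrm{ex}}(u)$ coincides (up to the paper's own sign convention) with $\mathbf{P}\brac{\nabla\cdot(c^2\nabla u)}$, so your first term is the paper's $e_1$ and your second is its $e_2$. The subsequent mechanics—cancelling the penalty term, combining the remaining terms into interpolation differences, testing with the quantity being bounded, and using trace plus inverse inequalities with the face term $[\zeta^{\pm}]\cdot(\nabla v_h)^{\pm}$ identified as the dominant $h_N^{-2}$ contribution that forces the $h_N^{k+3}$ interpolation hypothesis—match the paper's proof step for step.
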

\begin{proof}
    We denote the standard $L^2$ projection operator onto the sparse grid DG finite element space by $\mathbf{P}$, then
	\begin{equation}
		\norm{L_h(u) + \nabla \cdot (c^2(\mathbf{x}) \nabla u)}\le e_1 + e_2,
	\end{equation}
	where 
	\begin{equation*}
		e_1 := \norm{L_h(u) + \mathbf{P}(\nabla \cdot (c^2(\mathbf{x}) \nabla u))},
	\end{equation*}
	and
	\begin{equation*}
		e_2 := \norm{\mathbf{P}(\nabla \cdot (c^2(\mathbf{x}) \nabla u) - \nabla \cdot (c^2(\mathbf{x}) \nabla u)}
	\end{equation*}
	The estimate for $e_2$ can be obtained by projection properties  \cite{guo2016transport}:
	\begin{equation}\label{eq:estimate-e2}
		e_2 \le C\abs{\log_2h_N}^{d}h_N^{k+1}.
	\end{equation}	

	To estimate $e_1,$ we consider  any test function $v_h$ in DG space, and obtain  
	\begin{align*}
		&\sum_{K\in {\Omega_N}}\int_{K}(L_h(u)+\mathbf{P}(\nabla \cdot (c^2(\mathbf{x}) \nabla u))v_h \ d\textbf{x}= \sum_{K\in {\Omega_N}}\int_{K}(L_h(u)+\nabla \cdot (c^2(\mathbf{x}) \nabla u))v_h \ d\textbf{x}\\
		={}& -  \int_{\Omega} \mathcal{I}_h(c^2 \nabla u) \cdot \nabla v_h \ d\textbf{x}  + \sum_{e\in \Gamma } \int_{e} \{\mathcal{I}_h(c^2 \nabla u) \} \cdot [v_h] \ ds \nonumber \\ 
		& + \sum_{e \in \Gamma}\int_{e} \brac{ \frac{1}{2}[\mathcal{I}_h((c^2)^- u)]\cdot(\nabla v_h)^- + \frac{1}{2}[\mathcal{I}_h((c^2)^+ u)]\cdot(\nabla v_h)^+} \ ds  - \sum_{e \in \Gamma} \frac{\sigma}{h_N} \int_e [u] \cdot [v_h] \ ds \\
		& +  \int_{\Omega} c^2 \nabla u \cdot \nabla v_h \ d\textbf{x}  - \sum_{e\in \Gamma } \int_{e} \{c^2 \nabla u\} \cdot [v_h] \ ds \nonumber \\ 
		& - \sum_{e \in \Gamma}\int_{e} \brac{ \frac{1}{2}[(c^2)^- u]\cdot(\nabla v_h)^- + \frac{1}{2}[(c^2)^+ u]\cdot(\nabla v_h)^+} \ ds  + \sum_{e \in \Gamma} \frac{\sigma}{h_N} \int_e [u] \cdot [v_h] \ ds \\
		={}& -  \int_{\Omega} \brac{\mathcal{I}_h(c^2 \nabla u)-c^2 \nabla u} \cdot \nabla v_h \ d\textbf{x}  
		+ \sum_{e\in \Gamma } \int_{e} \{ \mathcal{I}_h(c^2 \nabla u)-c^2 \nabla u \} \cdot [v_h] \ ds \nonumber \\ 
		& + \sum_{e \in \Gamma}\int_{e} \brac{ \frac{1}{2}[\mathcal{I}_h((c^2)^- u)-(c^2)^- u]\cdot(\nabla v_h)^- + \frac{1}{2}[\mathcal{I}_h((c^2)^+ u)-(c^2)^+ u]\cdot(\nabla v_h)^+} \ ds \\
		\le{}& \norm{\mathcal{I}_h[c^2 \nabla u]-c^2 \nabla u} \norm{\nabla v_h} + \norm{\mathcal{I}_h[c^2 \nabla u]-c^2 \nabla u}_{L^2(\Gamma_h)}\norm{v_h}_{L^2(\Gamma_h)}  \\
		& + \norm{\mathcal{I}_h[c^2 \nabla u]-c^2 \nabla u}_{L^2(\Gamma_h)}\norm{\nabla v_h}_{L^2(\Gamma_h)} \\
		\le{}& C\abs{\log_2h_N}^{d}h_N^{k+3}h_N^{-1}\norm{v_h}  + Ch_N^{-\frac12}\abs{\log_2h_N}^{d}h_N^{k+3}h_N^{-\frac{1}{2}}\norm{v_h}  + Ch_N^{-\frac12}\abs{\log_2h_N}^{d}h_N^{k+3}h_N^{-\frac32}\norm{v_h} \\
		={}& C\abs{\log_2h_N}^{d}h_N^{k+1}\norm{v_h}.
	\end{align*}
    Here, we have used the multiplicative trace inequality and the inverse inequality, see e.g. Lemma 2.1 and Lemma 2.3 in \cite{huang2017quadrature}.
	By taking $v_h=(L_h(u)+\mathbf{P}(\nabla \cdot (c^2(\mathbf{x}) \nabla u))$ in the  inequality above, we have
	\begin{equation}\label{eq:estimate-e1}
		e_1 = \norm{L_h(u)+\mathbf{P}(\nabla \cdot (c^2(\mathbf{x}) \nabla u))}\le C \abs{\log_2h_N}^{d}h_N^{k+1}.
	\end{equation}	
    Combining \eqref{eq:estimate-e1} and \eqref{eq:estimate-e2}, we have the estimate for the truncation error \eqref{eq:truncation-sparse}.
\end{proof}

\begin{rem}
 	The proposition above %, we find that, for the interpolation we need polynomials of two degree higher than in the original DG function space, if one would like to preserve the order of accuracy for the original sparse grid DG scheme, i.e. we shall require 
	indicates that, to  preserve the order of the original scheme, we should use $M \ge k+2.$ For example, if we take piecewise linear polynomials for the DG space, then it is required to apply cubic interpolation operator to treat the nonlinear terms. From our numerical tests, however, this seems that it is not a necessary condition. To reach the desired convergence rate, one only needs to take $M\ge k$.
\end{rem}
    
In Proposition \ref{prop:accurate-interp}, we only estimate the local truncation error, and this is far from a rigorous error estimate that takes into account  stability. Unlike the scheme with the symmetric bilinear form $B(u_h,v)$ as in Theorem \ref{thm:stable}, the symmetry is lost in the interpolated bilinear form $\tilde{B}(u_h, v).$  Hence, energy stability is not automatic. In numerical experiments, we observe that the sparse grid DG method with Lagrange interpolation with only inner interface points is unstable for polynomials of high degrees (see the numerical results in Table \ref{tab:smooth-sparse-2D-inner-pt} in Section 4). With the interpolation points at the {\emph{interface}}, the sparse grid DG scheme is   stable and yields satisfactory convergence rate (see Table \ref{tab:smooth-sparse-2D-interface-pt} in Section 4). %Compared with similar approach solving nonlinear conservation laws \cite{huang2019adaptive}.

\subsection{Time stepping and adaptivity}
\label{subsec:timeadapt}

For   time discretizations, we first write the second order semi-discrete scheme \eqref{eq:DG-semi-interp-discontinuous}
\begin{equation}
	(u_h)_{tt} = L_h(u_h)
\end{equation}
into a first order system
\begin{align*}
	(u_h)_t &= w_h, \\
	(w_h)_t &= L_h(u_h),
\end{align*}
and then appy the standard Runge-Kutta scheme. The reason why we use the one-step RK method instead of the multistep method is that the maximum allowed time step size from the CFL restriction may change with the adaptive mesh in different time steps. This would result in additional computational cost in extrapolation or interpolation between different time steps for the multistep methods. %If one sticks to apply the fixed time step determined by the finest grid in the multistep method, it will be computationally expensive for coarse mesh.

The adaptive scheme uses the procedure developed in \cite{bokanowski2013adaptive,guo2017adaptive} to determine the space $\bV$ that dynamically evolves over time. The method is very similar to those in \cite{bokanowski2013adaptive,guo2017adaptive}, and the details are omitted for brevity. The main difference is that we need to keep track of  two sets of basis functions corresponding to the same adaptive space are involved  \cite{huang2019adaptive}.  Another difference is that the refinement and the coarsening criteria are determined by the $L^2$ norms of both $u_h$ and $w_h$, which are both important  for predicting solution profiles for wave equations. There are some cases which start with a zero displacement $u$ but a non-zero velocity $u_t$. If we only take the norms of $u_h$ as an indicator, the adaptive procedure will result in poor resolutions. Only by considering the norms of both $u_h$ and $w_h$, one can capture the profiles well.

\subsection{Fast algorithms}
\label{subsec:fast}

We now describe the fast matrix-vector multiplication algorithm, which is essential for efficient implementation of our schemes. Because the multiwavelet bases are global,  the evaluation of the residual yields denser matrix than those obtained by standard local bases. Efficient implementations are therefore essential to ensure that the computational cost is on par with element-wise implementation of traditional DG schemes. This issue has been also discussed in our work for conservation laws \cite{huang2019adaptive}, which extends the fast matrix-vector multiplication in \cite{shen2010efficient,zeiser2011fast} to adaptive index set.

Following \cite{shen2010efficient,huang2019adaptive}, we consider   matrix-vector multiplication in multi-dimensions in an abstract framework.
 \begin{equation}\label{eq:LU-original}
	f_{\bm{n}} = \sum_{H(\bm{n}')\le 0} f'_{\bm{n}'} t_{n_1', n_1}^{(1)} t_{n_2',n_2}^{(2)}\cdots t_{n_d',n_d}^{(d)}, \quad H(\bm{n})\le 0,
\end{equation}
where $\bm{n}=({n}_1,{n}_2,\dots,{n}_d)$ and $\bm{n}'=({n}_1',{n}_2',\dots,{n}_d')$ can be thought of as  the level of the mesh, and $t_{n_1', n_1}^{(i)}=T^{(i)}_{n_1', n_1}$ represents the calculations in the $i$-th dimension. It is assumed that the constraint function $H=H(\bm{n}')=H({n}_1',{n}_2',\dots,{n}_d')$ is non-decreasing with respect to each variable. This holds true for sparse grid (by taking $H(\bm{n}')=|\bm{n}'|_1$) and also for adaptive multiresolution method.

One can compute the sum \eqref{eq:LU-original} dimension-by-dimension, i.e. we first perform the transformation in the $x_1$ dimension:
\begin{equation}\label{eq:adapt-x1}
    g^{(1)}_{(n_1,n_2',\dots,n_d')} = \sum_{H(n_1',n_2',\dots,n_d')\le 0} f'_{(n_1',n_2',\dots,n_d')} t_{n_1', n_1}^{(1)},
\end{equation}
and then in the $x_2$ dimension:
\begin{equation}\label{eq:adapt-x2}
    g^{(2)}_{(n_1,n_2,n_3'\dots,n_d')} = \sum_{H(n_1,n_2',\dots,n_d')\le 0} g^{(1)}_{(n_1,n_2',\dots,n_d')} t_{n_2', n_2}^{(2)},
\end{equation}
and all the way up to $x_d$ dimension:
\begin{equation}\label{eq:adapt-xd}
    f_{(n_1,n_2,n_3\dots,n_d)} = \sum_{H(n_1,n_2,\dots,n_{d-1},n_d')\le 0} g^{(d-1)}_{(n_1,n_2,\dots,n_{d-1},n_d')} t_{n_d', n_d}^{(d)}.
\end{equation}
It can be proved that    \eqref{eq:adapt-x1}-\eqref{eq:adapt-xd} is equivalent to the original summation \eqref{eq:LU-original}, if assuming that, for some integer $1\le k\le d$, $T^{(i)}$ for $i=1,\dots,k-1$ are strictly lower triangular and $T^{(i)}$ for $i=k+1,\dots,d$ are upper triangular (or $T^{(i)}$ for $i=1,\dots,k-1$ are lower triangular and $T^{(i)}$ for $i=k+1,\dots,d$ are strictly upper triangular) \cite{shen2010efficient}. Here, $T^{(i)}$ denotes the $i$-th transformation matrix.
When such properties for $T^{(i)}$ matrices are not true, one can perform  $L+U$ split and   \eqref{eq:LU-original} becomes:
\begin{equation}
    f_{\bm{n}} = \sum_{H(\bm{n}')\le 0} f'_{\bm{n}'} (l_{n_1', n_1}^{(1)}+u_{n_1', n_1}^{(1)})(l_{n_1', n_1}^{(1)}+u_{n_1', n_1}^{(1)})\cdots (l_{n_{d-1}', n_{d-1}}^{(d-1)}+u_{n_{d-1}', n_{d-1}}^{(d-1)})t_{n_d',n_d}^{(d)},
\end{equation}
where there are in total $2^{d-1}$ terms that can be computed dimension-by-dimension. 
The overall computational cost   is $\mathcal{O}(2^{d-1} \cdot DoF \cdot N)$   if the cost of one-dimensional transform is log-linear, i.e., $\mathcal{O}(\mathcal{N}\log \mathcal{N})$  where $\mathcal{N}$ denotes the DoF in one-dimension \cite{shen2010efficient}. This assumption holds true for our sparse grid DG scheme.  

We apply this fast matrix-vector multiplication in several parts of our algorithm. We will discuss the details about   initialization, which is the procedure to project the initial value onto the DG finite element space represented by multiwavelet bases. When the given initial value is separable, i.e., %a product of $d$ functions in 1D
\begin{equation}
	u(x) = \prod_{i=1}^d u_i(x_i),
\end{equation}
one just need to project each 1D function $u_i=u_i(x_i)$ for $i=1,\dots,d$ onto 1D multiwavelet bases and then we can easily get the projection of $u(x)$. This approach naturally extends to the case when the initial value is a summation of separable functions:
\begin{equation}
	u(x) = \sum_{j=1}^n (\prod_{i=1}^d u_{j,i}(x_i)).
\end{equation}
However, if the function is non-separable, direct evaluation of $L^2$ projection would result in very large computational cost if using numerical quadratures in multi-dimensions. Rather, we propose to apply the  adaptive multiresolution interpolation introduced in \cite{tao2019collocation} with appropriate error tolerance and approximate the solution using a collection of interpolation basis functions:
\begin{equation}\label{eq:init-interp-u}
	u_h(\bx) = \sum_{\substack{(\bl,\bj)\in G, \\ \mathbf{1}\le \bi\le \bk+\mathbf{1} }} {b}^\bj_{\bi,\bl} \psi^\bj_{\bi,\bl}(\bx)
\end{equation}
with $G$ the index of all active elements. Next we use the fast matrix-vector multiplication \eqref{eq:LU-original} to transform coefficients of interpolation basis $\{ {b}^\bj_{\bi,\bl} \}$ to coefficients of Alpert's basis $\{ {c}^\bj_{\bi,\bl} \}$:
\begin{equation}\label{eq:init-alpert-u}
	u_h(\bx) = \sum_{\substack{(\bl,\bj)\in G, \\ \mathbf{1}\le \bi\le \bk+\mathbf{1} }} {c}^\bj_{\bi,\bl} v^\bj_{\bi,\bl}(\bx)
\end{equation}
In \eqref{eq:LU-original}, $f_{\bm{n}}$ and $f'_{\bm{n}'}$ represent $\{ {b}^\bj_{\bi,\bl} \}$ and $\{ {c}^\bj_{\bi,\bl} \}$, respectively. The matrix $t_{n_i',n_i}^{(i)}$ is the product of 1D interpolation basis and 1D Alpert's basis. 
% \todo{check sparse or adaptive}

We apply similar approach in the multiresolution interpolation and the evaluation of the right hand side of the weak formulation  \eqref{eq:DG-semi-interp-discontinuous}. We refer readers to \cite{huang2019adaptive} for details.

\section{Numerical examples}
\label{sec:numeric}

In this section, we perform numerical experiments to validate the performance of our scheme. We consider 2D or 3D problems with computational domain being $[0,1]^d$ with $d=2,3$. The CFL number is taken to be $0.1$ in 2D and $0.05$ in 3D. The penalty parameter $\sigma$ is taken to be 10 in 2D and 30 in 3D, unless otherwise stated. For the accuracy test, we check the convergence order for $P^k$ DG with $k=1,2,3$ coupled with RK time discretization. In particular, for $k=1,2,$ we use the second and the third-order strong stability preserving Runge-Kutta method \cite{shu1988jcp,Gottlieb_2001_SIAM_Stabi}, and for $k=3,$ we use the classical RK4 methods. All adaptive calculations are obtained by $k=3$ and RK4 time stepping. In the adaptive scheme, we take $\eta=\epsilon/10$.  $\textrm{DoF=dim}(\bV^k)$ refers to the number of Alperts' multiwavelets basis functions in the adaptive grids. The maximum mesh level $N$ is taken to be 8, unless otherwise stated.

%%%%%%%%%%%%%%%%%%%%%%%%%%%%%%%%%%%%%%%%%%%%%%
\begin{exam}[wave equation with constant coefficient]\label{exam:01-const-coeff}
In this example, consider the $d$-dimensional wave equation with a constant coefficient
\begin{equation}
    u_{tt} =  \sum_{i=1}^du_{x_ix_i} \\
\end{equation}
on the domain $[0, 1]^d$. We take the exact solution to be 
$$u(\mathbf{x},t)= \sin(a\sqrt{d}\pi t)\prod\limits_{i=1}^{d} \cos(a\pi x_i)$$
with $a$ being a constant and various types of boundary conditions. 

\begin{enumerate}[label=(\alph*)]
\item We take $a=2$ and $d=2, 3$ with periodic boundary conditions.

\item We take $a=1$ and $d=2, 3$ and incorporate Dirichlet boundary condition in the $x_1$-direction and Neumann boundary in other directions.
\end{enumerate}
\end{exam}

Note that in this example, \eqref{sp2} is implemented with no interpolation because $c$ is a constant. 
 To output the $L^2$-error between the numerical solution $u_h$ and the exact solution $u(x),$
%$	e_2 := \brac{ \int_{\Omega} (u_h - u)^2 dx }^{1/2},
%$
%one can use the Gauss-Legendre quadrature in each finest element. However, this would result in large computational cost in 3D. Here, we apply another approach by splitting the error into three parts:
we use the fact that \begin{align*}
 \int_{\Omega} (u_h - u)^2 dx  = \int_{\Omega} u_h^2 dx - 2\int_{\Omega} u_hu dx + \int_{\Omega} u^2 dx.
\end{align*}
The first term $\int_{\Omega} u_h^2 dx$ can be easily computed with the aid of the orthonormality of the Alpert's basis functions. The second term $\int_{\Omega} u_hu dx$ can be computed by the same fast approach as the initial projection, which has been explained in detail in Section \ref{subsec:fast}. The third term $\int_{\Omega} u^2 dx$ can be computed analytically.   %We also point out the limitation of this approach. If the order of magnitude of $e_2$ is less than $10^{-8}$, then $(e_2)^2$ will be around machine precision $10^{-16}$, which would result in subtractive cancellation and thus the inaccurate evaluation of $e_2$.

 The numerical results obtained by sparse grid DG method are presented in Table \ref{tab:constant-periodic-sparse} for case (a) and in Table \ref{tab:constant-mixbc-sparse} for case (b). For both cases, the convergence order is slightly bigger than $k$ but smaller than $k+1$, which is higher than the predicted rate in Theorem \ref{thm:error}, but similar to the results for linear transport equation in \cite{guo2016transport}. The numerical results with adaptive method are shown in Tables \ref{tab:constant-periodic-adaptive} and   \ref{tab:constant-mixbc-adaptive}. Similar to  \cite{guo2017adaptive}, we measure the convergence rates with respect to DoF: $R_{\textrm{DoF}}$ and $\epsilon: R_{\epsilon}.$ We can clearly observe the effectiveness of the adaptive algorithm, i.e. $R_{\epsilon}$ is close to 1. The convergence order $R_{\textrm{DoF}}$ is bigger than $\frac{k+1}{d},$ which is the rate obtained by an optimally convergent non-adaptive scheme.

\begin{table}[htbp]
  \centering
  \caption{Example \ref{exam:01-const-coeff}(a): wave equation with constant coefficients, periodic boundary conditions, sparse grid DG, $k=1, 2, 3$, $d=2, 3$. $t=0.1$.}
  \label{tab:constant-periodic-sparse}%
    \begin{tabular}{c|c|c|c|c|c|c|c|c|c}
    \hline
    \multirow{2}[4]{*}{} & \multirow{2}[4]{*}{$N$} & \multicolumn{2}{c|}{$k=1$} & \multirow{2}[4]{*}{$N$} & \multicolumn{2}{c|}{$k=2$} & \multirow{2}[4]{*}{$N$} & \multicolumn{2}{c}{$k=3$} \bigstrut\\
\cline{3-4}\cline{6-7}\cline{9-10}          &      &   $L^2$-error    &    order   &       &   $L^2$-error    &    order   &       &   $L^2$-error    & order \bigstrut\\
    \hline
    \multirow{4}[2]{*}{$d=2$}     	  
          & 5 & 5.90e-03 & -    & 5  & 1.96e-04 & -     & 3  & 2.80e-04 & - \\
          & 6 & 1.69e-03 & 1.81 & 6  & 3.03e-05 & 2.69  & 4  & 1.80e-05 & 3.96 \\
          & 7 & 4.66e-04 & 1.86 & 7  & 4.43e-06 & 2.77  & 5  & 1.48e-06 & 3.61 \\
          & 8 & 1.23e-04 & 1.92 & 8  & 6.21e-07 & 2.83  & 6  & 1.10e-07 & 3.76 \\
    \hline
    \hline
    \multirow{2}[4]{*}{} & \multirow{2}[4]{*}{$N$} & \multicolumn{2}{c|}{$k=1$} & \multirow{2}[4]{*}{$N$} & \multicolumn{2}{c|}{$k=2$} & \multirow{2}[4]{*}{$N$} & \multicolumn{2}{c}{$k=3$} \bigstrut\\
\cline{3-4}\cline{6-7}\cline{9-10}          &      &   $L^2$-error    &    order   &       &   $L^2$-error    &    order   &       &   $L^2$-error    & order \bigstrut\\
    \hline
    \multirow{4}[2]{*}{$d=3$} 
			& 5 & 1.58e-02 & -    & 5 & 7.38e-04 & -    & 3 & 4.50e-04 & -    \\
			& 6 & 8.66e-03 & 0.87 & 6 & 1.68e-04 & 2.14 & 4 & 8.02e-05 & 2.49 \\
			& 7 & 2.42e-03 & 1.84 & 7 & 3.03e-05 & 2.47 & 5 & 5.24e-06 & 3.94 \\
			& 8 & 8.41e-04 & 1.53 & 8 & 5.30e-06 & 2.51 & 6 & 5.30e-07 & 3.30 \\
	\hline
    \end{tabular}%
\end{table}%

\begin{table}[htbp]
  \centering
  \caption{Example \ref{exam:01-const-coeff}(b): wave equation with constant coefficients, Dirichlet and Neumann boundary conditions, sparse grid DG, $k=1, 2, 3$, $d=2, 3$. $t=0.1$.}
  \label{tab:constant-mixbc-sparse}%
    \begin{tabular}{c|c|c|c|c|c|c|c|c|c}
    \hline
    \multirow{2}[4]{*}{} & \multirow{2}[4]{*}{$N$} & \multicolumn{2}{c|}{$k=1$} & \multirow{2}[4]{*}{$N$} & \multicolumn{2}{c|}{$k=2$} & \multirow{2}[4]{*}{$N$} & \multicolumn{2}{c}{$k=3$} \bigstrut\\
\cline{3-4}\cline{6-7}\cline{9-10}          &      &   $L^2$-error    &    order   &       &   $L^2$-error    &    order   &       &   $L^2$-error    & order \bigstrut\\
    \hline
    \multirow{4}[2]{*}{$d=2$}     	  
          & 3 & 3.30e-03 & -    & 3 & 1.21e-04 & -     & 1 & 4.51e-04 & - \\
          & 4 & 1.21e-03 & 1.44 & 4 & 1.79e-05 & 2.75  & 2 & 6.34e-05 & 2.83 \\
          & 5 & 2.94e-04 & 2.04 & 5 & 2.43e-06 & 2.89  & 3 & 8.30e-06 & 2.93 \\
          & 6 & 8.15e-05 & 1.85 & 6 & 3.41e-07 & 2.83  & 4 & 7.64e-07 & 3.44 \\
    \hline
    \hline
    \multirow{2}[4]{*}{} & \multirow{2}[4]{*}{$N$} & \multicolumn{2}{c|}{$k=1$} & \multirow{2}[4]{*}{$N$} & \multicolumn{2}{c|}{$k=2$} & \multirow{2}[4]{*}{$N$} & \multicolumn{2}{c}{$k=3$} \bigstrut\\
\cline{3-4}\cline{6-7}\cline{9-10}          &      &   $L^2$-error    &    order   &       &   $L^2$-error    &    order   &       &   $L^2$-error    & order \bigstrut\\
    \hline
    \multirow{4}[2]{*}{$d=3$} 
			& 3 & 2.15e-02 & -    & 3 & 2.14e-04 & - & 1 & 4.44e-04 & -  \\
			& 4 & 7.06e-03 & 1.61 & 4 & 3.39e-05 & 2.66 & 2 & 3.66e-05 & 3.60 \\
			& 5 & 2.04e-03 & 1.79 & 5 & 5.13e-06 & 2.73 & 3 & 2.39e-06 & 3.94 \\
			& 6 & 5.27e-04 & 1.95 & 6 & 1.08e-06 & 2.24 & 4 & 1.23e-07 & 4.28 \\
	\hline
    \end{tabular}%
\end{table}%

\begin{table}[!hbp]
\centering
\caption{Example \ref{exam:01-const-coeff}(a): wave equation with constant coefficients, periodic boundary conditions, adaptive sparse grid DG, $k=3$.}
\label{tab:constant-periodic-adaptive}
\begin{tabular}{c|c|c|c|c|c}
  \hline
  & $\epsilon$ & DoF & $L^2$-error & $R_{\textrm{DoF}}$ & $R_{\epsilon}$ \\
  \hline
\multirow{4}{3em}{$d=2$}
& 1e-1 & 128 & 1.25e-3 & - & - \\
& 1e-2 & 320 & 2.80e-4 & 1.63 & 0.65 \\
& 1e-3 & 1088 & 2.61e-5 & 1.94 & 1.03 \\
& 1e-4 & 1536 & 4.02e-6 & 5.43 & 0.81 \\
\hline
\multirow{4}{3em}{$d=3$}
& 1e-1 & 896 & 2.16e-3 & - & - \\
& 1e-2 & 2432 & 4.74e-4 & 1.52 & 0.66 \\
& 1e-3 & 5888 & 8.30e-5 & 1.97 & 0.76  \\
& 1e-4 & 28160 & 6.69e-6 & 1.61 & 1.09 \\   
\hline
\end{tabular}
\end{table}

\begin{table}[!hbp]
\centering
\caption{Example \ref{exam:01-const-coeff}(b): wave equation with constant coefficients, Dirichlet and Neumann boundary conditions, adaptive sparse grid DG, $k=3$.}
\label{tab:constant-mixbc-adaptive}
\begin{tabular}{c|c|c|c|c|c}
  \hline
  & $\epsilon$ & DoF & $L^2$-error & $R_{\textrm{DoF}}$ & $R_{\epsilon}$ \\
  \hline
\multirow{4}{3em}{$d=2$}
& 1e-1 & 32 & 1.11e-3 & - & - \\
& 1e-2 & 112 & 6.35e-5 & 2.29 & 1.24 \\
& 1e-3 & 208 & 1.70e-5 & 2.13 & 0.57 \\
& 1e-4 & 384 & 2.35e-6 & 3.22 & 0.86 \\   
\hline
\multirow{4}{3em}{$d=3$}
& 1e-1 & 64 & 1.29e-3 & - & - \\
& 1e-2 & 640 & 3.71e-5 & 1.54 & 1.54 \\
& 1e-3 & 1280 & 2.41e-5 & 0.62 & 0.19 \\
& 1e-4 & 2368 & 3.12e-6 & 3.32 & 0.89 \\
\hline
\end{tabular}
\end{table}

%%%%%%%%%%%%%%%%%%%%%%%%%%%%%%%%%%%%%%%%%%%%%%
\begin{exam}[wave equation with smooth variable coefficient]\label{exam:02-smooth-var-coeff}
This example tests wave equation with smooth variable coefficient
\begin{equation}
	u_{tt} - \nabla \cdot (c^2(\mathbf{x}) \nabla u )= f,
\end{equation}
on the computational domain $[0, 1]^d$ with $d=2,3$ and periodic boundary conditions.

For 2D case, we take
\begin{equation}
  c^2(x_1,x_2)= (\cos(2\pi x_1)\cos(2\pi x_2) +2)/3,
\end{equation}
and the corresponding source term $f=f(x_1,x_2,t)$ such that the exact solution is
\begin{equation}
  u= \sin(\pi t) \sin(2\pi x_1)\cos(2\pi x_2).
\end{equation}

For 3D case, we take 
\begin{equation}
  c^2(x_1,x_2,x_3) = (\sin(2\pi x_1)\sin(2\pi x_2)\cos(2\pi x_3) +2)/3,
\end{equation}
and the corresponding source term $f=f(x_1,x_2,x_3,t)$ such that the exact solution is
\begin{equation}
  u = \sin(\pi t) \sin(2\pi x_1)\cos(2\pi x_2)\cos(2\pi x_3).
\end{equation}
\end{exam}

This problem needs to invoke the fast interpolation methods to handle the variable coefficient. We first compare different choices of interpolation points. 
We use the inner interpolation points in Table \ref{tab:smooth-sparse-2D-inner-pt} for 2D.  The interpolation points and the basis functions are listed in the Appendix. When $k=1$, the convergence order seems satisfactory. However, for $k=2, 3,$ the scheme is unstable. If we use Lagrange interpolation with the interface points, one will observe good convergence rate for $M\ge k$, as shown in Table \ref{tab:smooth-sparse-2D-interface-pt}.  We also find that the error is almost the same for $M=k+1$ and $M=k+2$, and both much smaller than $M=k$. Therefore, in applications, we recommend taking $M=k+1$ for accuracy considerations. Notice that this is a more relaxed condition from what is indicated by the local truncation analysis Proposition \ref{prop:accurate-interp}. We also remark that for nonlinear conservation laws in \cite{huang2019adaptive},   Lagrange interpolation is unstable even with interface points, and Hermite interpolation has to be employed. However, for all numerical examples in this paper  for linear wave equations with variable coefficients, Lagrange interpolation with interface points yields a stable scheme, and we choose to use this instead of Hermite interpolation due to its easier implementation. 

For 3D cases, to save space, we only show numerical results with interface interpolation points in Table \ref{tab:smooth-sparse-3D}, in which good convergence rate is also observed. The result using adaptive method with $k=3$ and $M=4$ are presented in Table \ref{tab:smooth-adaptive} for both 2D and 3D, and the conclusions are similar to the constant coefficient case.

\begin{table}[htbp]
  \centering
  \caption{Example \ref{exam:02-smooth-var-coeff}: wave equation with smooth variable coefficients in 2D, sparse grid DG, Lagrange interpolation with inner interpolation points, $k=1, 2, 3$. $t=0.1$.}
  \label{tab:smooth-sparse-2D-inner-pt}%
    \begin{tabular}{c|c|c|c|c|c|c|c}
    \hline
    \multirow{2}[4]{*}{} & \multirow{2}[4]{*}{$N$} & \multicolumn{2}{c|}{$M=1$} & \multicolumn{2}{c|}{$M=2$} & \multicolumn{2}{c}{$M=3$} \bigstrut\\
    \cline{3-8}          &       & $L^2$-error & order & $L^2$-error & order & $L^2$-error & order \bigstrut\\
    \hline
    \multirow{4}[3]{3em}{$k=1$} & 3     & 2.52e-02 & - & 2.55e-02 & - & 2.52e-02 & - \\
                              & 4     & 1.68e-02 & 0.59 & 1.63e-02 & 0.64 & 1.63e-02 & 0.63 \\
                              & 5     & 3.67e-03 & 2.19 & 3.37e-03 & 2.28 & 3.36e-03 & 2.28 \\
                              & 6     & 9.43e-04 & 1.96 & 9.62e-04 & 1.81 & 8.33e-04 & 2.01 \\
    \hline
    \hline
    \multirow{2}[4]{*}{} & \multirow{2}[4]{*}{$N$} & \multicolumn{2}{c|}{$M=2$} & \multicolumn{2}{c|}{$M=3$} & \multicolumn{2}{c}{$M=4$} \bigstrut\\
    \cline{3-8}          &       & $L^2$-error & order & $L^2$-error & order & $L^2$-error & order \bigstrut\\
    \hline
    \multirow{4}[3]{3em}{$k=2$}
                              & 3     & 1.64e-02 & - & 3.95e-03 & -  & 4.73e-02 & -  \\
                              & 4     & 1.03e-02 & 0.67 & 7.89e-04 & 2.32  & 1.46e-01 & -1.63  \\
                              & 5     & 3.57e-03 & 1.53 & 7.80e-04 & 0.02  & 3.17e+02 & -11.08 \\
                              & 6     & 1.13e-02 & -1.66& 1.60e-01 & -7.68 & 6.27e+10 & -27.56 \\
    \hline  
    \hline  
    \multirow{2}[4]{*}{} & \multirow{2}[4]{*}{$N$} & \multicolumn{2}{c|}{$M=3$} & \multicolumn{2}{c|}{$M=4$} & \multicolumn{2}{c}{$M=5$} \bigstrut\\
    \cline{3-8}          &       & $L^2$-error & order & $L^2$-error & order & $L^2$-error & order \bigstrut\\
    \hline
    \multirow{4}[3]{3em}{$k=3$} 
                              & 3 & 5.28e-03 & - & 2.14e+03 & - & 5.13e+03 & - \\
                              & 4 & 5.55e-02 & -3.40   & 1.11e+10 & -22.31   & 2.23e+09 & -18.73 \\
                              & 5 & 1.93e+03 & -15.09  & 1.39e+24 & -46.83   & 4.78e+21 & -40.97 \\
                              & 6 & 8.15e+20 & -58.55  & 9.83e+56 & -109.12  & 1.40e+51 & -97.88 \\
    \hline    
    \end{tabular}%  
\end{table}%

\begin{table}[htbp]
  \centering
  \caption{Example \ref{exam:02-smooth-var-coeff}: wave equation with smooth variable coefficients in 2D, sparse grid DG, Lagrange interpolation with interface interpolation points, $k=1, 2, 3$. $t=0.1$.}
  \label{tab:smooth-sparse-2D-interface-pt}%
    \begin{tabular}{c|c|c|c|c|c|c|c}
    \hline
    \multirow{2}[4]{*}{} & \multirow{2}[4]{*}{$N$} & \multicolumn{2}{c|}{$M=2$} & \multicolumn{2}{c|}{$M=3$} & \multicolumn{2}{c}{$M=4$} \bigstrut\\
    \cline{3-8}          &       & $L^2$-error & order & $L^2$-error & order & $L^2$-error & order \bigstrut\\
    \hline
    \multirow{4}[3]{3em}{$k=1$}
                              & 3 & 2.52e-02 & - & 2.52e-02 & - & 2.52e-02 & - \\
                              & 4 & 1.65e-02 & 0.61 & 1.63e-02 & 0.63 & 1.63e-02 & 0.63 \\
                              & 5 & 3.52e-03 & 2.23 & 3.36e-03 & 2.28 & 3.36e-03 & 2.28 \\
                              & 6 & 9.52e-04 & 1.89 & 8.30e-04 & 2.02 & 8.27e-04 & 2.02 \\
    \hline
    \hline
    \multirow{2}[4]{*}{} & \multirow{2}[4]{*}{$N$} & \multicolumn{2}{c|}{$M=2$} & \multicolumn{2}{c|}{$M=3$} & \multicolumn{2}{c}{$M=4$} \bigstrut\\
    \cline{3-8}          &       & $L^2$-error & order & $L^2$-error & order & $L^2$-error & order \bigstrut\\
    \hline
    \multirow{4}[3]{3em}{$k=2$} 
                              & 3 & 2.69e-03 & - & 2.08e-03 & - & 2.08e-03 & - \\
                              & 4 & 5.24e-04 & 2.36 & 4.38e-04 & 2.25 & 4.37e-04 & 2.25 \\
                              & 5 & 1.25e-04 & 2.07 & 7.58e-05 & 2.53 & 7.58e-05 & 2.53 \\
                              & 6 & 1.64e-05 & 2.93 & 1.16e-05 & 2.71 & 1.16e-05 & 2.71 \\
    \hline
    \hline
   \multirow{2}[4]{*}{} & \multirow{2}[4]{*}{$N$} & \multicolumn{2}{c|}{$M=3$} & \multicolumn{2}{c|}{$M=4$} & \multicolumn{2}{c}{$M=5$} \bigstrut\\
    \cline{3-8}          &       & $L^2$-error & order & $L^2$-error & order & $L^2$-error & order \bigstrut\\
    \hline
    \multirow{4}[3]{3em}{$k=3$}
                              & 3 & 2.92e-04 & - & 9.28e-05 & - & 8.75e-05 & - \\
                              & 4 & 2.66e-05 & 3.46 & 1.05e-05 & 3.15 & 1.03e-05 & 3.09 \\
                              & 5 & 3.04e-06 & 3.13 & 7.80e-07 & 3.74 & 7.68e-07 & 3.74 \\
                              & 6 & 1.83e-07 & 4.05 & 5.10e-08 & 3.94 & 5.03e-08 & 3.93 \\
    \hline    
    \end{tabular}%  
\end{table}%

\begin{table}[htbp]
  \centering
  \caption{Example \ref{exam:02-smooth-var-coeff}: wave equation with smooth variable coefficients in 3D, sparse grid DG, Lagrange interpolation with interface interpolation points, $k=1, 2, 3$. $t=0.1$.}
  \label{tab:smooth-sparse-3D}%
    \begin{tabular}{c|c|c|c|c|c|c|c}
    \hline
    \multirow{2}[4]{*}{} & \multirow{2}[4]{*}{$N$} & \multicolumn{2}{c|}{$M=1$} & \multicolumn{2}{c|}{$M=2$} & \multicolumn{2}{c}{$M=3$} \bigstrut\\
    \cline{3-8}          &       & $L^2$-error & order & $L^2$-error & order & $L^2$-error & order \bigstrut\\
    \hline
    \multirow{4}[3]{3em}{$k=1$} 
                              & 3 & 1.17e-01 & - & 1.17e-01 & - & 1.17e-01 & - \\
                              & 4 & 2.20e-02 & 2.41  & 2.20e-02 & 2.41  & 2.20e-02 & 2.41  \\
                              & 5 & 1.74e-02 & 0.34  & 1.71e-02 & 0.36  & 1.71e-02 & 0.36  \\
                              & 6 & 4.65e-03 & 1.90  & 4.52e-03 & 1.92  & 4.51e-03 & 1.92  \\
    \hline
    \hline
    \multirow{2}[4]{*}{} & \multirow{2}[4]{*}{$N$} & \multicolumn{2}{c|}{$M=2$} & \multicolumn{2}{c|}{$M=3$} & \multicolumn{2}{c}{$M=4$} \bigstrut\\
    \cline{3-8}          &       & $L^2$-error & order & $L^2$-error & order & $L^2$-error & order \bigstrut\\
    \hline
    \multirow{4}[3]{3em}{$k=2$}
                              & 4 & 2.96e-03 & -    & 1.58e-03 & -    & 1.58e-03 & -   \\
                              & 5 & 7.78e-04 & 1.93 & 3.28e-04 & 2.27 & 3.27e-04 & 2.27 \\
                              & 6 & 2.93e-04 & 1.41 & 6.58e-05 & 2.32 & 6.58e-05 & 2.32 \\
                              & 7 & 3.88e-05 & 2.92 & 1.15e-05 & 2.52 & 1.15e-05 & 2.52 \\
    \hline
    \hline  
    \multirow{2}[4]{*}{} & \multirow{2}[4]{*}{$N$} & \multicolumn{2}{c|}{$M=3$} & \multicolumn{2}{c|}{$M=4$} & \multicolumn{2}{c}{$M=5$} \bigstrut\\
    \cline{3-8}          &       & $L^2$-error & order & $L^2$-error & order & $L^2$-error & order \bigstrut\\
    \hline
    \multirow{4}[3]{3em}{$k=3$} 
                              & 3 & 8.96e-04 & - & 3.88e-04 & - & 3.17e-04 & - \\
                              & 4 & 2.05e-04 & 2.13 & 3.58e-05 & 3.44 & 2.19e-05 & 3.85 \\
                              & 5 & 4.87e-05 & 2.07 & 3.27e-06 & 3.45 & 3.01e-06 & 2.86 \\
                              & 6 & 5.80e-06 & 3.07 & 2.55e-07 & 3.68 & 2.31e-07 & 3.71 \\

    \hline    
    \end{tabular}%  
\end{table}%

\begin{table}[!hbp]
\centering
\caption{Example \ref{exam:02-smooth-var-coeff}, wave equation with smooth variable coefficient, adaptive sparse grid DG, 2D and 3D. $k=3$, $M=4$, $t=0.1$.}
\label{tab:smooth-adaptive}
\begin{tabular}{c|c|c|c|c|c}
  \hline
  & $\epsilon$ & DoF & $L^2$-error & $R_{\textrm{DoF}}$ & $R_{\epsilon}$ \\
  \hline
\multirow{4}{3em}{$d=2$}
& 1e-1 & 96 & 1.66e-3 & - & - \\
& 1e-2 & 224 & 3.03e-4 & 2.00 & 0.74 \\
& 1e-3 & 672 & 2.78e-5 & 2.18 & 1.04 \\
& 1e-4 & 1088 & 3.17e-6 & 4.50 & 0.94 \\   
\hline
\multirow{4}{3em}{$d=3$}
& 1e-1 & 576 & 2.11e-3 & - & - \\
& 1e-2 & 1152 & 5.26e-4 & 2.00 & 0.60 \\
& 1e-3 & 3584 & 8.73e-5 & 1.58 & 0.78 \\
& 1e-4 & 8704 & 1.26e-5 & 2.18 & 0.84 \\
\hline
\end{tabular}
\end{table}

%%%%%%%%%%%%%%%%%%%%%%%%%%%%%%%%%%%%%%%%%%%%%%
\begin{exam}[wave equation with discontinuous coefficients]\label{exam:03-dis-coeff}
In this example, we consider  wave equation with discontinuous coefficients. The jump of the coefficient aligns with the cell interface on the fine mesh $\Omega_N.$

For 2D case, the domain $\Omega = [0, 1]^2$ is composed of two subdomains $\Omega_1 = [\frac{1}{4}, \frac{3}{4}] \times [0, 1]$ and $\Omega_2 = \Omega \backslash \Omega_1$. The coefficient $c^2$ is  a constant in each subdomain:
\begin{align}\label{example3_2_new}
c^2=
\begin{cases}
1, \qquad \text{in} \quad \Omega_1, \\
{\frac{5}{37}}, \qquad \text{in} \quad \Omega_2.
\end{cases}
\end{align}
Periodic boundary conditions are imposed in both $x_1$- and $x_2$- directions. With this setup, the exact solution is a standing wave
\begin{align}\label{example3_3_new}
u= 
\begin{cases}
\sin(\sqrt{20}\pi t)\cos(4\pi x_1) \cos(2\pi x_2 ), \qquad \text{in} \quad \Omega_1, \\
\sin(\sqrt{20}\pi t)\cos(12\pi x_1 )\cos(2\pi x_2), \qquad \text{in} \quad \Omega_2.
\end{cases}
\end{align}

For 3D case, $\Omega_1 = [\frac{1}{4}, \frac{3}{4}] \times [0, 1] \times [0, 1]$ and $\Omega_2 = \Omega \backslash \Omega_1$
\begin{align}\label{example3_2_3d}
c^2=
\begin{cases}
1, \qquad \text{in} \quad \Omega_1, \\
{\frac{3}{19}}, \qquad \text{in} \quad \Omega_2.
\end{cases}
\end{align}
Periodic boundary conditions are imposed in all   directions. With this setup, the exact solution is a standing wave
\begin{align}\label{example3_3_3d}
u= 
\begin{cases}
\sin(\sqrt{24}\pi t)\cos(4\pi x_1) \cos(2\pi x_2 ) \cos(2\pi x_3), \qquad \text{in} \quad \Omega_1, \\
\sin(\sqrt{24}\pi t)\cos(12\pi x_1 )\cos(2\pi x_2) \cos(2\pi x_3), \qquad \text{in} \quad \Omega_2.
\end{cases}
\end{align}

\end{exam}

Since the solution is only piecewise smooth, the sparse grid DG method is not expected to have good convergence rate. Therefore, we only show the convergence result obtained by the adaptive method in Table \ref{tab:dis-coeff-adaptive} for both 2D and 3D. In addition, the adaptive result with the parameter $N=8$ and $\epsilon=1\times10^{-4}$ in 2D is shown in Fig. \ref{fig:dis-coeff-2D}. There are fewer DoFs in the $x_1$ direction since the solution is smooth in that direction, and as expected, there are more DoFs located in the subdomain $\Omega_1$ than that in $\Omega \backslash \Omega_1$. 
\begin{table}[!hbp]
\centering
\caption{Example \ref{exam:03-dis-coeff}. discontinuous coefficient, adaptive sparse grid DG, 2D and 3D. $t=0.01$.}
\label{tab:dis-coeff-adaptive}
\begin{tabular}{c|c|c|c|c|c}
  \hline
  & $\epsilon$ & DoF & $L^2$-error & $R_{\textrm{DoF}}$ & $R_{\epsilon}$ \\
  \hline
\multirow{4}{3em}{$d=2$}
& 1e-1 & 480 & 2.93e-4 & - & - \\
& 1e-2 & 1088 & 8.43e-5 & 1.52 & 0.54 \\
& 1e-3 & 2240 & 8.46e-6 & 3.18 & 1.00 \\
& 1e-4 & 4224 & 1.05e-6 & 3.29 & 0.91 \\
\hline
\multirow{4}{3em}{$d=3$}
& 1e-1 & 2304 & 5.59e-4 & - & - \\
& 1e-2 & 7040 & 1.28e-4 & 1.32 & 0.64 \\
& 1e-3 & 18176 & 1.65e-5 & 2.16 & 0.89 \\
& 1e-4 & 41472 & 1.55e-6 & 2.87 & 1.03 \\
\hline
\end{tabular}
\end{table}

\begin{figure}
    \centering
    \subfigure[numerical solution]{
    \begin{minipage}[b]{0.46\textwidth}
    \includegraphics[width=1\textwidth]{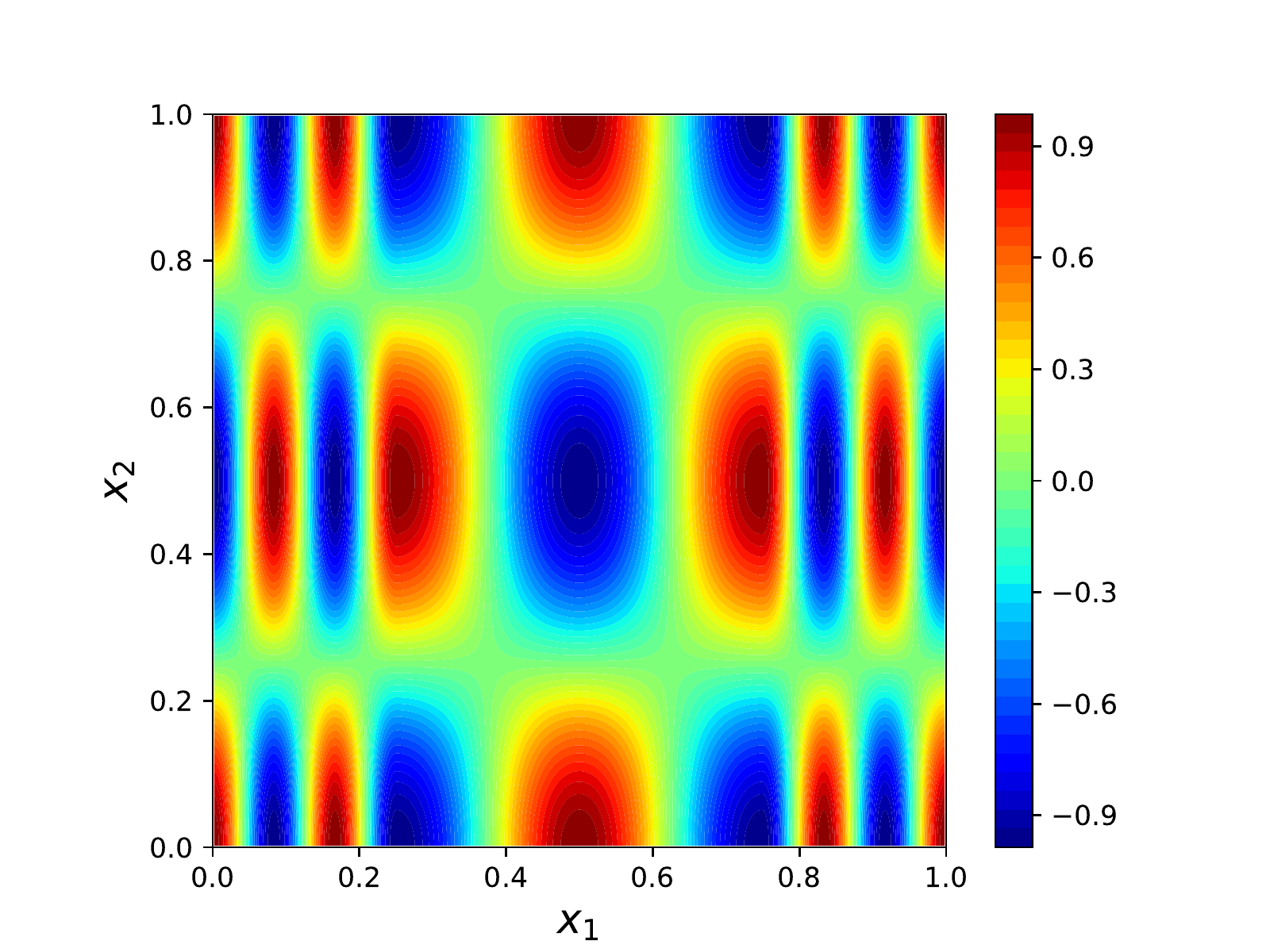}
    \end{minipage}
    }
    \subfigure[centers of active elements]{
    \begin{minipage}[b]{0.46\textwidth}    
    \includegraphics[width=1\textwidth]{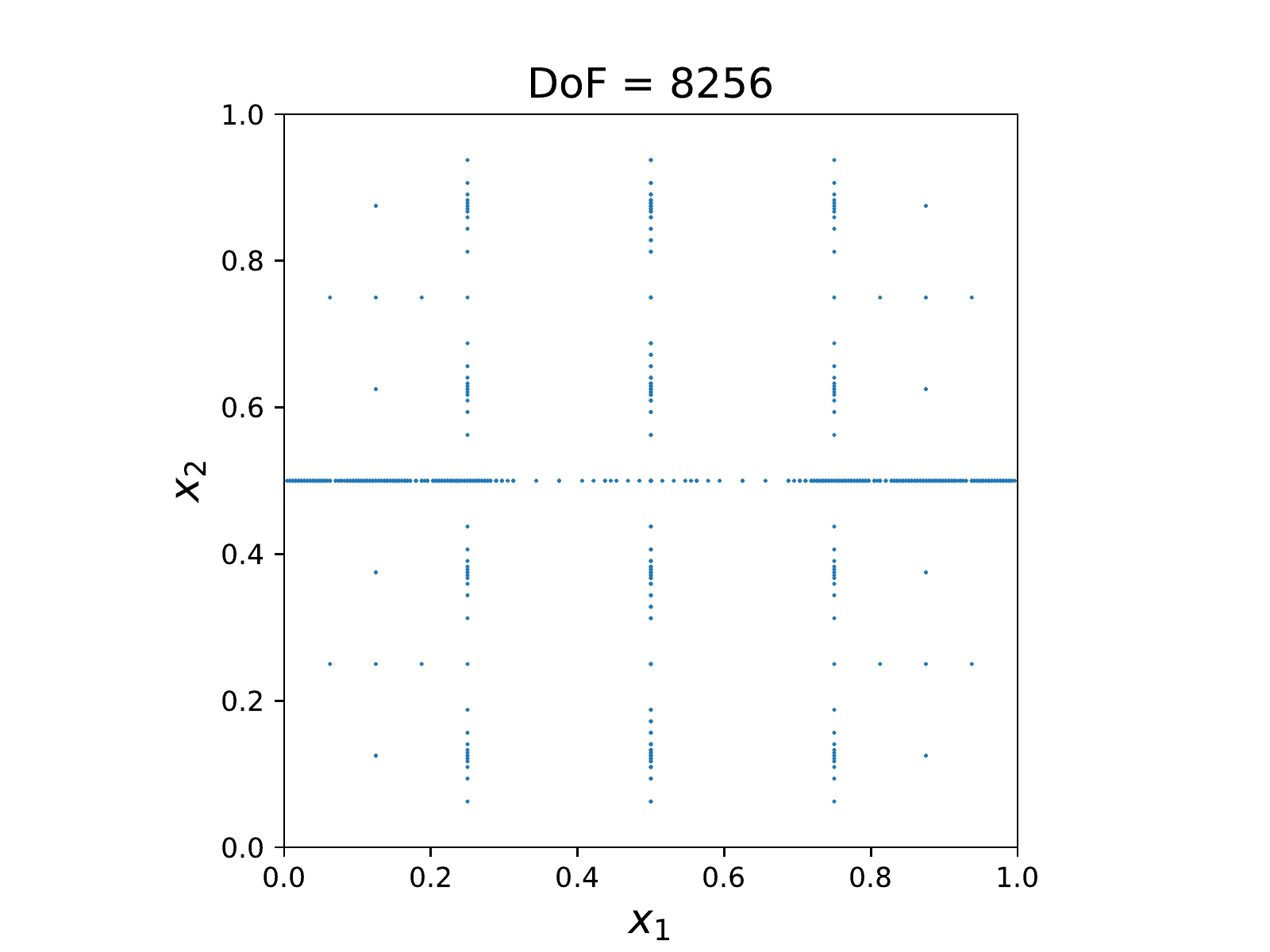}
    \end{minipage}
    }    
    \caption{Example \ref{exam:03-dis-coeff}: Discontinuous coefficient in 2D at $t=0.1$. Adaptive sparse grid DG with $N=8$ and $\epsilon=10^{-4}$.}
    \label{fig:dis-coeff-2D}
\end{figure}

% %%%%%%%%%%%%%%%%%%%%%%%%%%%%%%%%%%%%%%%%%%%%%%
% \noindent {\bf Example 4.} We consider the wave equation with point source and constant wave speed 
% \begin{align}\label{ep3}
% u_{tt} (\mathbf{x}, t) -  \triangle_{\mathbf{x}} u  =  S(\mathbf{x_0})
% \end{align}
% with zero initial condition and zero Dirichlet boundary condition.

% For 2D case, we put a source point on the point $\mathbf{x}_0 = (0.2, 0.5)$ and the source $S(\mathbf{x_0}) = \delta(\mathbf{x} - \mathbf{x}_0) \sin(100 t)$. For 3D case, a point source in located at the center of domain, $S(\mathbf{x_0}) = \delta(\mathbf{x} - \mathbf{x}_0) \sin(2\pi t)$ and $\mathbf{x}_0 = (0.5, 0.5, 0.5)$.  \\
% % \textcolor{red}{NEW!}
% % \textcolor{blue}{Gaussian point source : See paper No1 in dropbox, in which PML boundary conditions are used}. We use zero Dirichlet boundary condition and put the source at the center of the domain. For 2D case (\textcolor{red}{Section 5.1 in the paper}),  i.e. 
% % \begin{align}\label{example4_1}
% % S(\mathbf{x_0}) = \delta(\mathbf{x} - \mathbf{x}_0) h(t), 
% % \end{align}
% % with 
% % \begin{align}\label{example4_2}
% % h(t) = \frac{d}{dt} e^{-\pi^2(10t-1)^2} = -20\pi^2 (10t-1)e^{-\pi^2(10t-1)^2}.
% % \end{align}
% % For 3D case (\textcolor{red}{Section 5.3 in the paper}), we put the same source at the center of the domain. 

% %%%%%%%%%%%%%%%%%%%%%%%%%%%%%%%%%%%%%%%%%%%%%%
% \noindent {\bf Example 5.}  \textcolor{blue}{Xing uncle: example 5}

\begin{exam}[Expanding wave in homogeneous medium]\label{exam:ring}
We consider the wave equation with constant wave speed $c=1$ on the computational domain $\Omega = [0, 1]^d$. The homogeneous Neumann boundary conditions are used on all   boundaries. The initial condition is taken as
\begin{equation}
	u(\bx, 0) = 0, \quad u_t(\bx, 0)  = 100e^{-500r^2}
\end{equation}
with $r=(\sum_{i=1}^d x_i^2)^{1/2}$ being the radius.

For small time $t$ (before the wave front touch the outside boundary), the exact solution in 2D can be represented by an integral which is derived by Hadamard's method of descent:
\begin{equation}
	u(x_1,x_2,t) = \frac{1}{2\pi}\iint_{\rho<t} \frac{100e^{-500(y_1^2+y_2^2)}}{\sqrt{t^2-\rho^2}}dy_1dy_2
\end{equation}
with $\rho:=\sqrt{(y_1-x_1)^2+(y_2-x_2)^2}$
and then computed by using numerical integrations with sufficiently small error tolerance. For $d=3$, there exists the analytic solution:
\begin{equation}
	u(\bx,t) = \frac{1}{20r} \brac{e^{-500 (t-r)^2} - e^{-500 (t+r)^2}}.
\end{equation}

The numerical results for 2D and 3D are presented in Fig. \ref{fig:ring-2D} and Fig. \ref{fig:ring-3D}. In both cases, our numerical solutions coincide with the exact solutions quite well. The $L^{\infty}$ errors between the numerical and the exact solutions at $t=0.5$ are $9.06\times10^{-5}$ and $6.79\times10^{-4}$ for 2D and 3D, which are both in  the same magnitude as the adaptive parameter $\epsilon=1\times10^{-4}$. This indicates that our adaptive algorithm controls the error really well. The DoFs are 14896 and 188672 for 2D and 3D. It can be also observed that the active elements in 3D are more ``sparse'' than 2D. This is a numerical evidence that the Huyghens principle only holds for wave equations in odd dimensions.

\end{exam}

\begin{figure}
    \centering
    \subfigure[numerical solution]{
    \begin{minipage}[b]{0.46\textwidth}
    \includegraphics[width=1\textwidth]{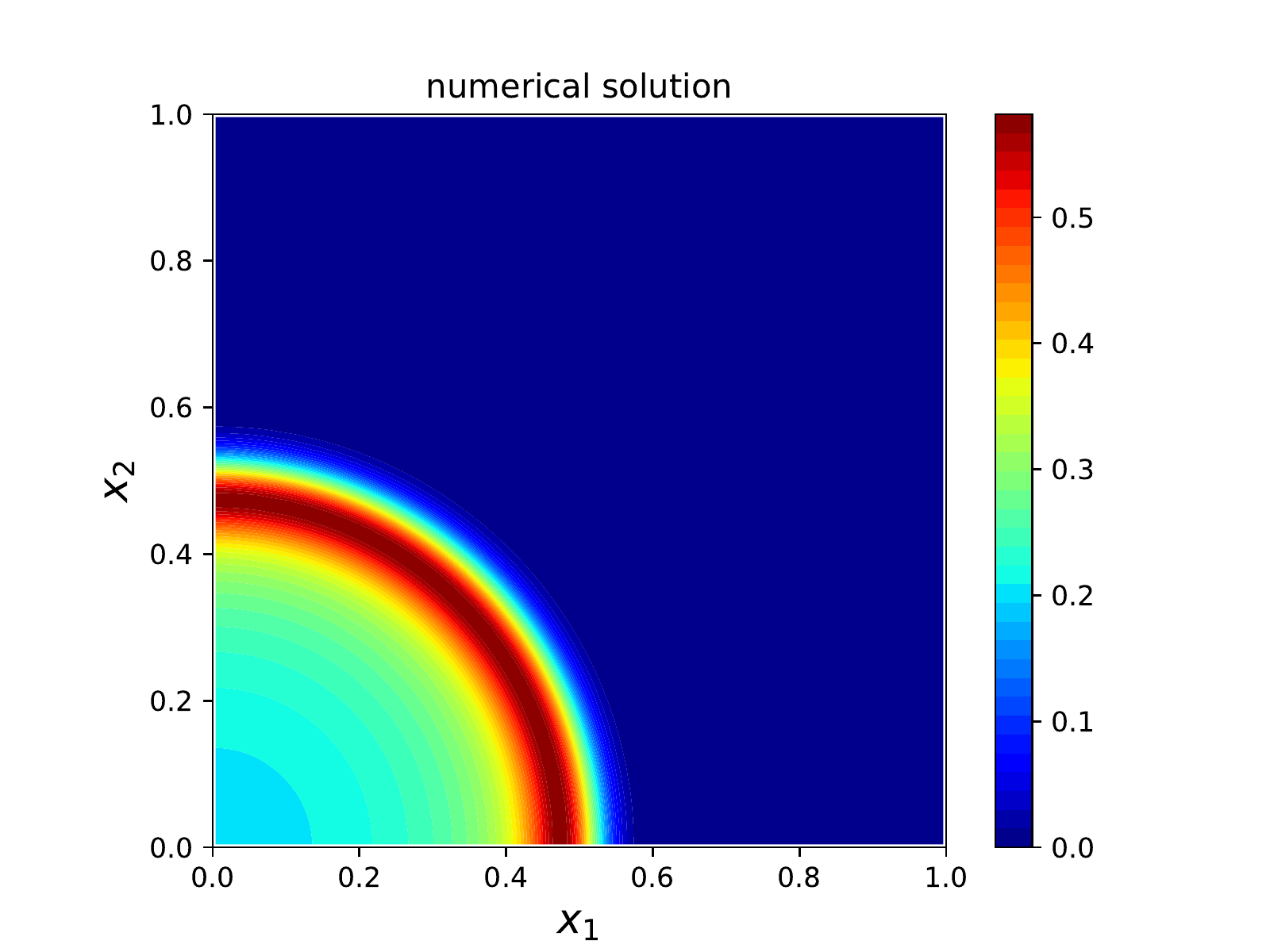}
    \end{minipage}
    }
    \subfigure[1D cut along diagonal $x_1=x_2$]{
    \begin{minipage}[b]{0.46\textwidth}    
    \includegraphics[width=1\textwidth]{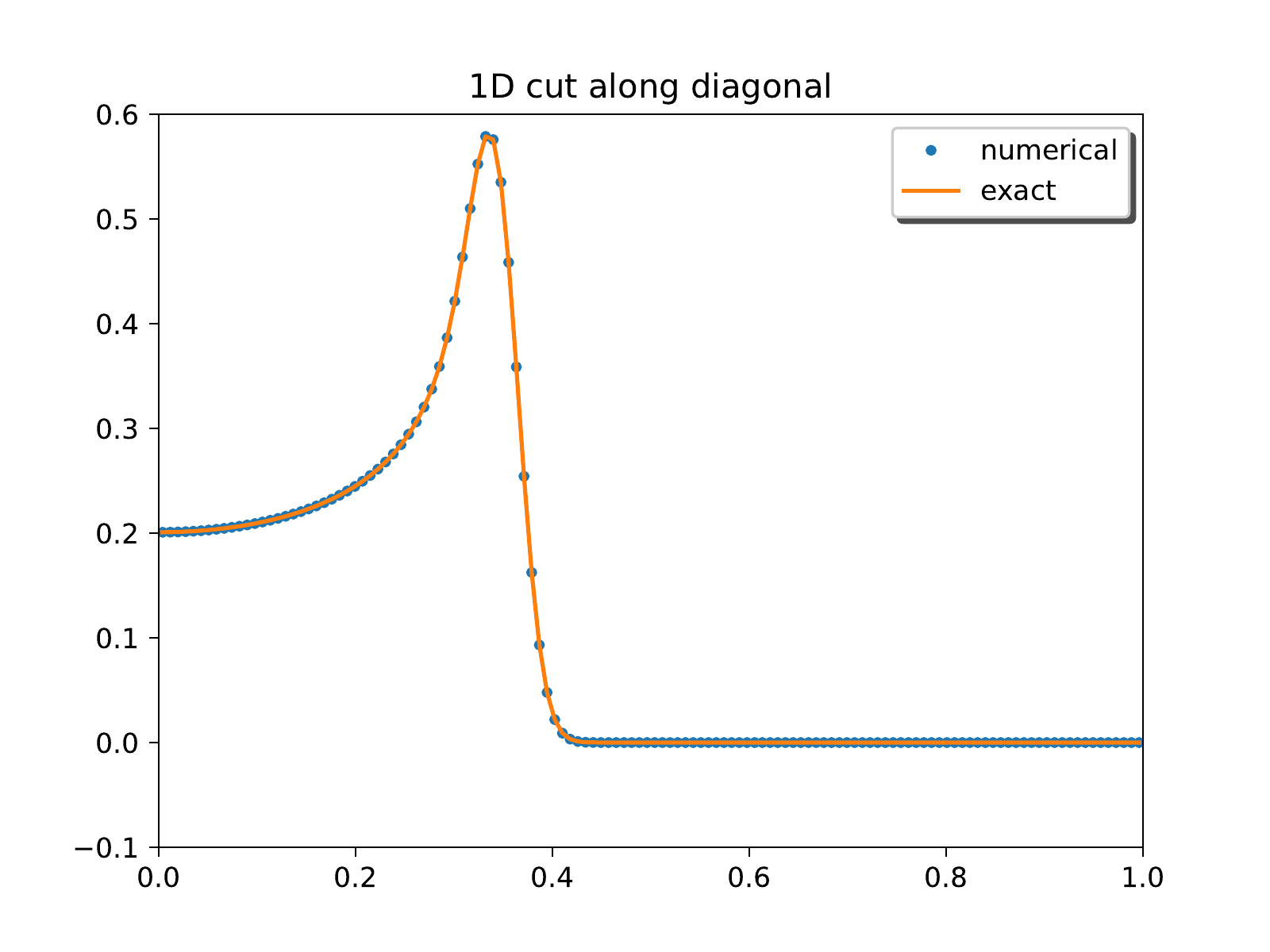}
    \end{minipage}
    }
    \bigskip
    \subfigure[error between exact and numerical solutions]{
    \begin{minipage}[b]{0.46\textwidth}
    \includegraphics[width=1\textwidth]{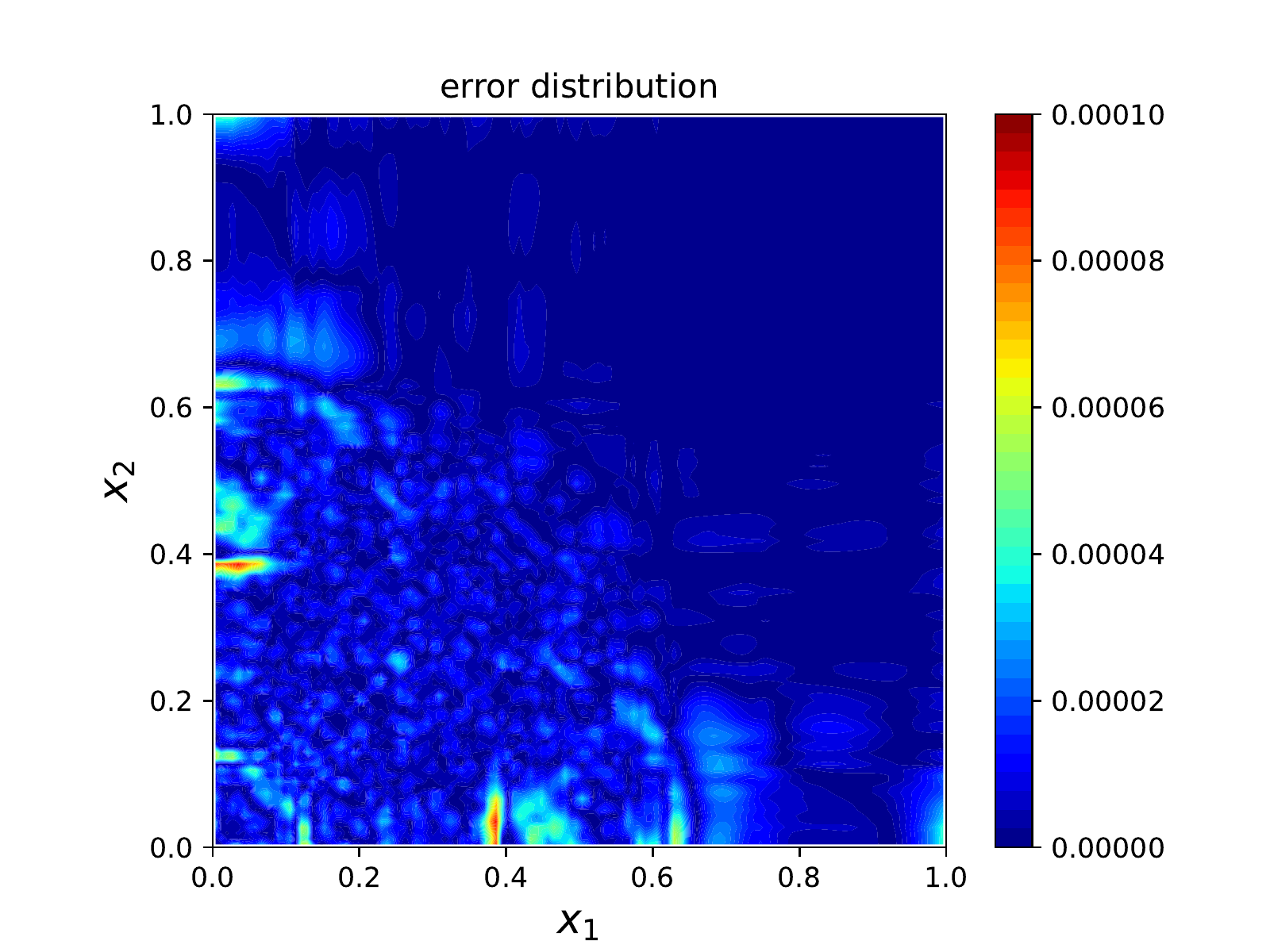}
    \end{minipage}
    }
    \subfigure[centers of active elements]{
    \begin{minipage}[b]{0.46\textwidth}    
    \includegraphics[width=1\textwidth]{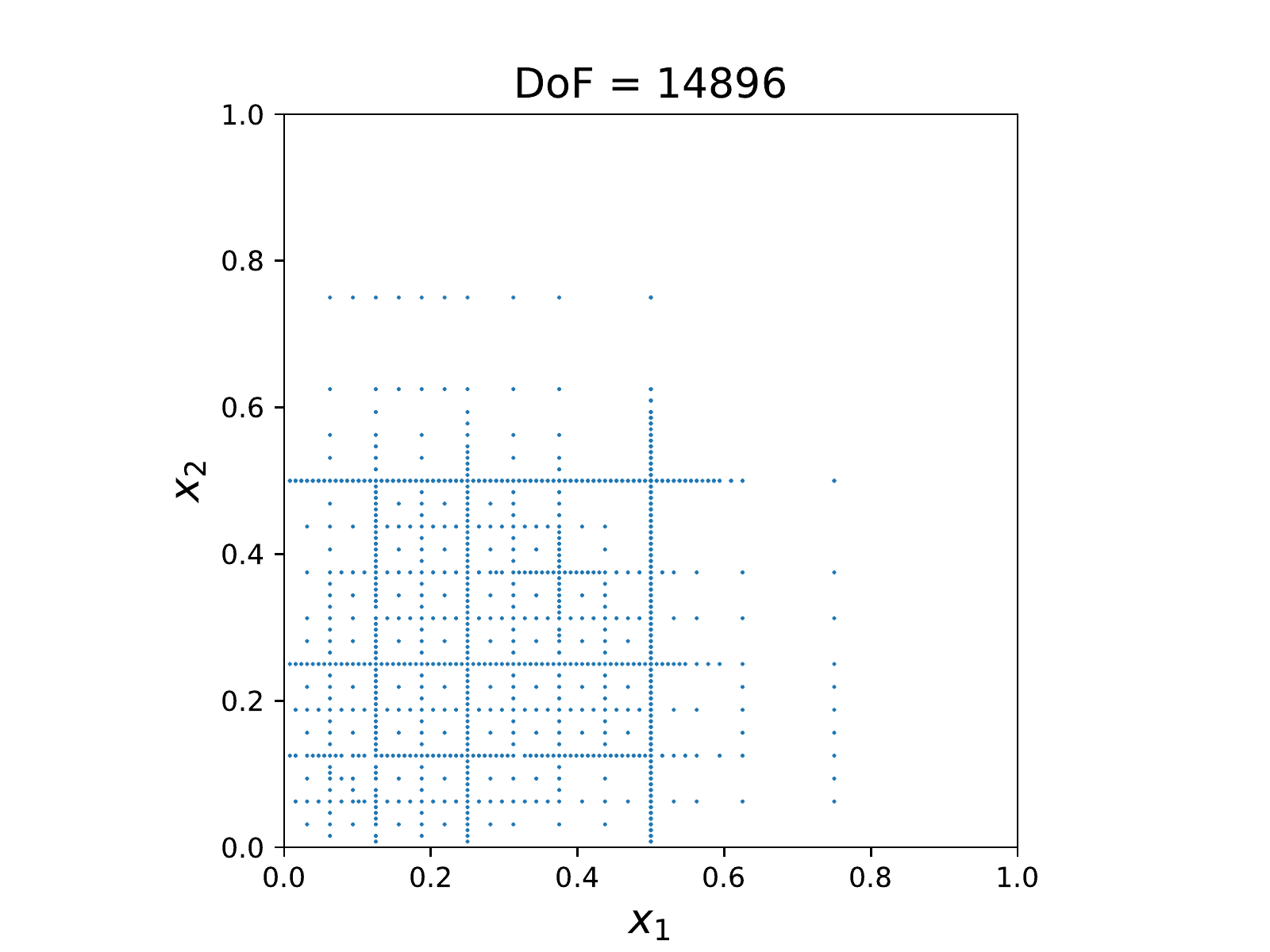}
    \end{minipage}
    }        
    \caption{Example \ref{exam:ring}: Expanding wave in homogeneous medium in 2D at $t=0.5$. Adaptive sparse grid DG. $N=7$ and $\epsilon=10^{-4}$.}
    \label{fig:ring-2D}
\end{figure}

\begin{figure}
    \centering
    \subfigure[numerical solution cut in 2D along $x_3=0$]{
    \begin{minipage}[b]{0.46\textwidth}
    \includegraphics[width=1\textwidth]{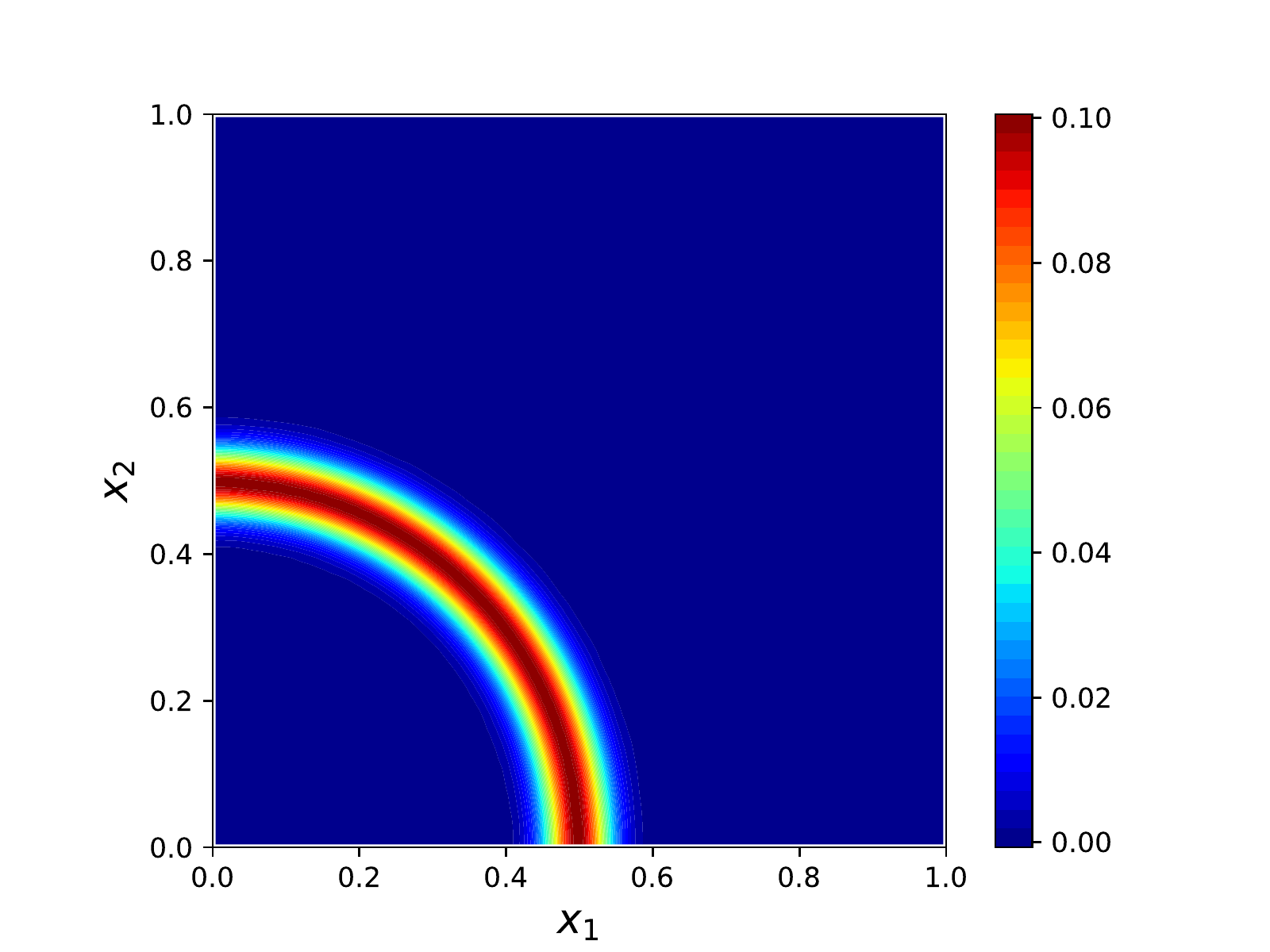}
    \end{minipage}
    }
    \subfigure[1D cut along $x_1=x_2$ and $x_3=0$]{
    \begin{minipage}[b]{0.46\textwidth}    
    \includegraphics[width=1\textwidth]{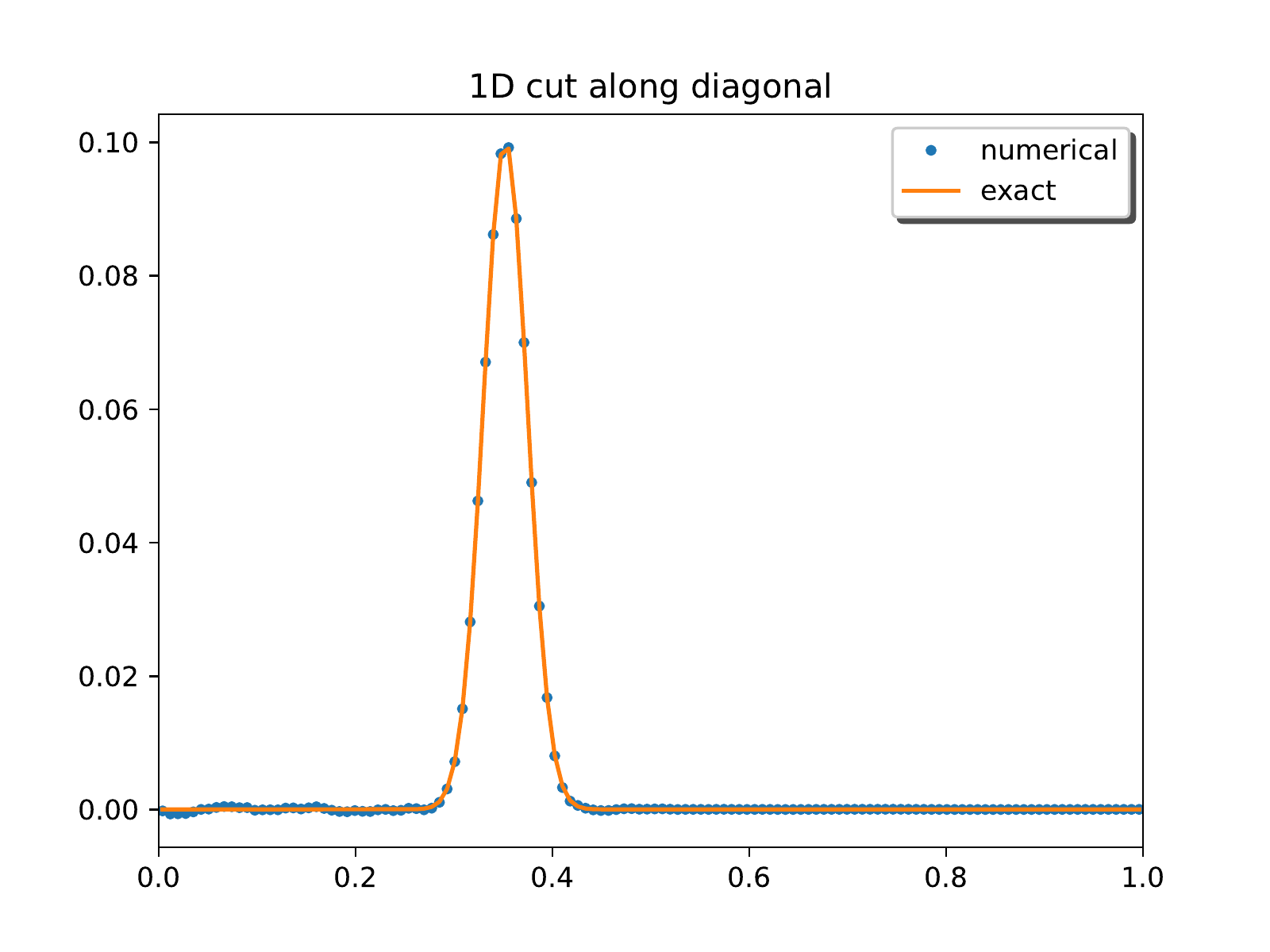}
    \end{minipage}
    }
    \bigskip
    \subfigure[centers of active elements in 3D]{
    \begin{minipage}[b]{0.46\textwidth}
    \includegraphics[width=1\textwidth]{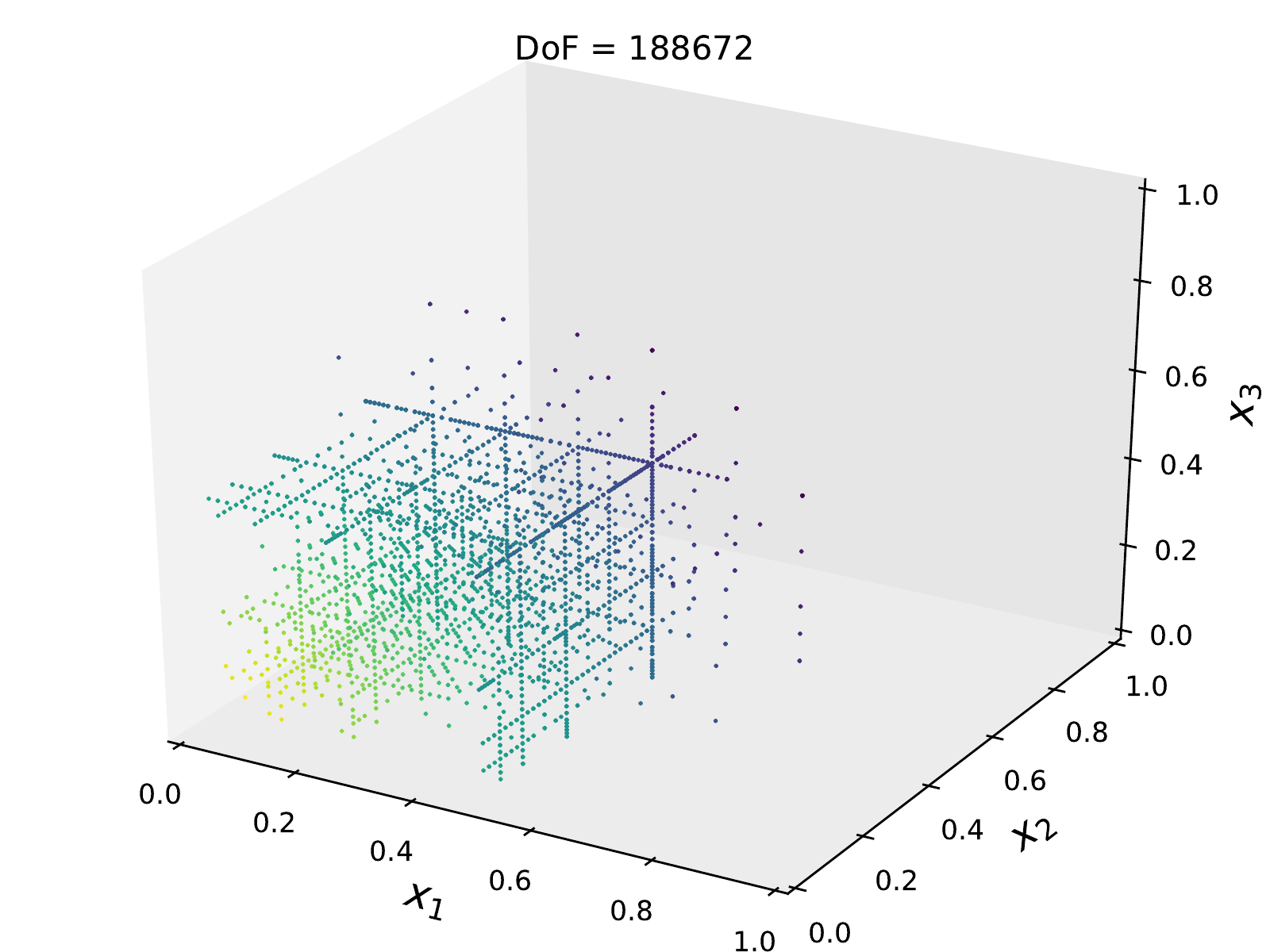}
    \end{minipage}
    }
    \subfigure[centers of active elements on $x_3=0.5$]{
    \begin{minipage}[b]{0.46\textwidth}    
    \includegraphics[width=1\textwidth]{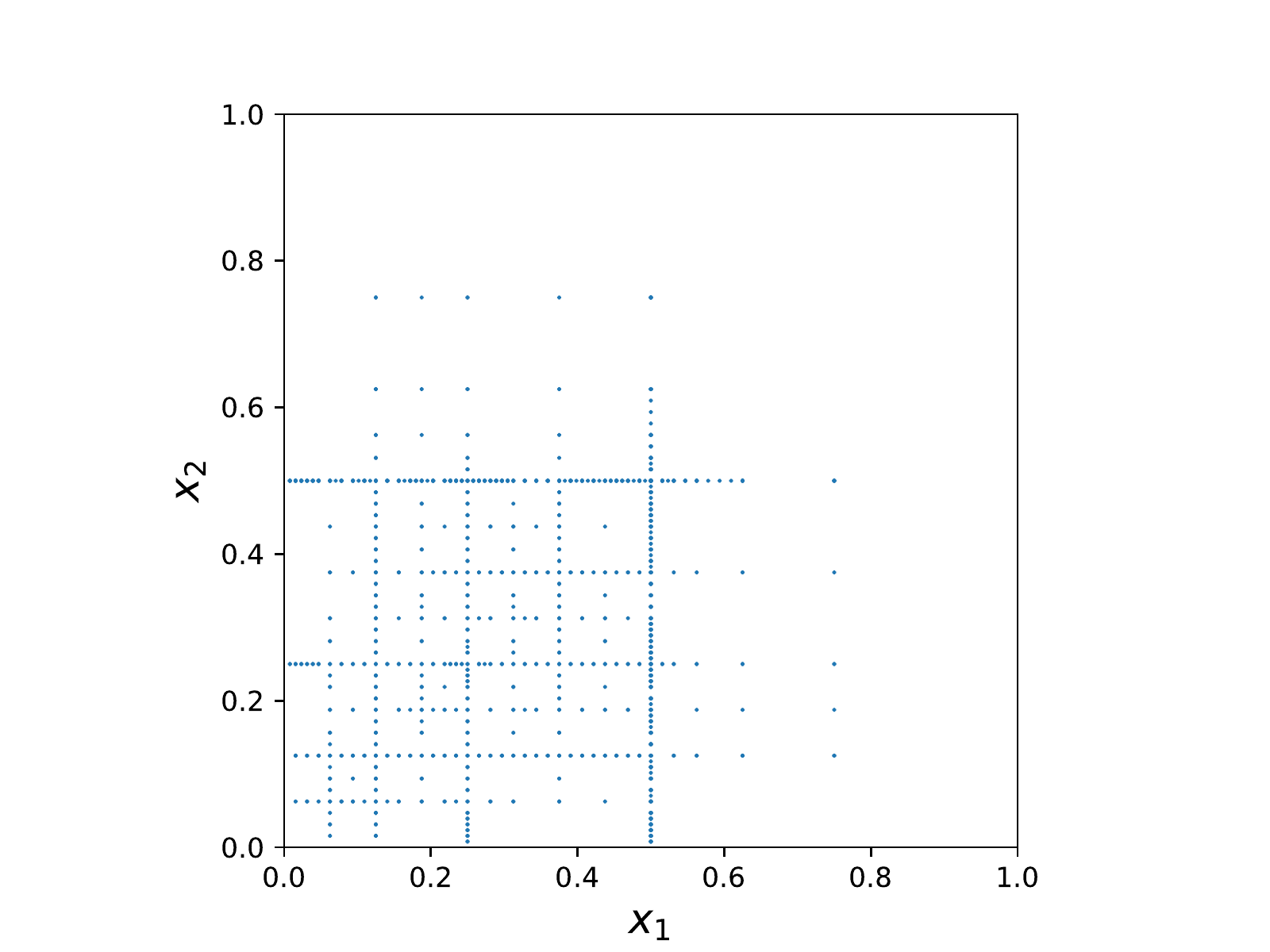}
    \end{minipage}
    }        
    \caption{Example \ref{exam:ring}: Expanding wave in homogeneous medium in 3D at $t=0.5$. Adaptive sparse grid DG.  $N=7$ and $\epsilon=10^{-4}$.}
    \label{fig:ring-3D}
\end{figure}

\begin{exam}[Isotropic wave propagation in heterogeneous media]\label{exam:heter-media}

We consider the wave equation with discontinuous coefficient on the computational domain $\Omega = [0, 1]^d$ for $d=2$ and 3 \cite{chou2014wave}.
\begin{align}
c^2=
\begin{cases}
\frac{1}{4}, \qquad \text{if} \quad 0.35 \le {x}_1 \le 0.65, \\
1, \qquad \text{otherwise}.
\end{cases}
\end{align}
Note that the jump in material coefficient is not aligned with the cell interface on $\Omega_N.$

For both 2D and 3D case, the initial conditions are taken as
\begin{align}\label{example5_2}
u(\mathbf{x}, 0)  = 0, \quad
u_t(\mathbf{x}, 0)   = 100e^{-500r^2}.
\end{align}
with $r=\brac{\sum_{i=1}^d(x_i-\frac12)^2}^{\frac12}$.
The zero Dirichlet boundary conditions are used.
\end{exam}

The profiles and centers of active elements obtained by the adaptive scheme are shown in Fig. \ref{fig:heter-media-2D} for 2D and Fig. \ref{fig:heter-media-3D} for 3D. We see that the wave fronts propagate at different speeds in these two media and our adaptive scheme capture this phenomenon and obtain comparable results to the literature \cite{chou2014wave}.
\begin{figure}
    \centering
    \subfigure[solution profile at $t=0.1$ ]{
    \begin{minipage}[b]{0.46\textwidth}
    \includegraphics[width=1\textwidth]{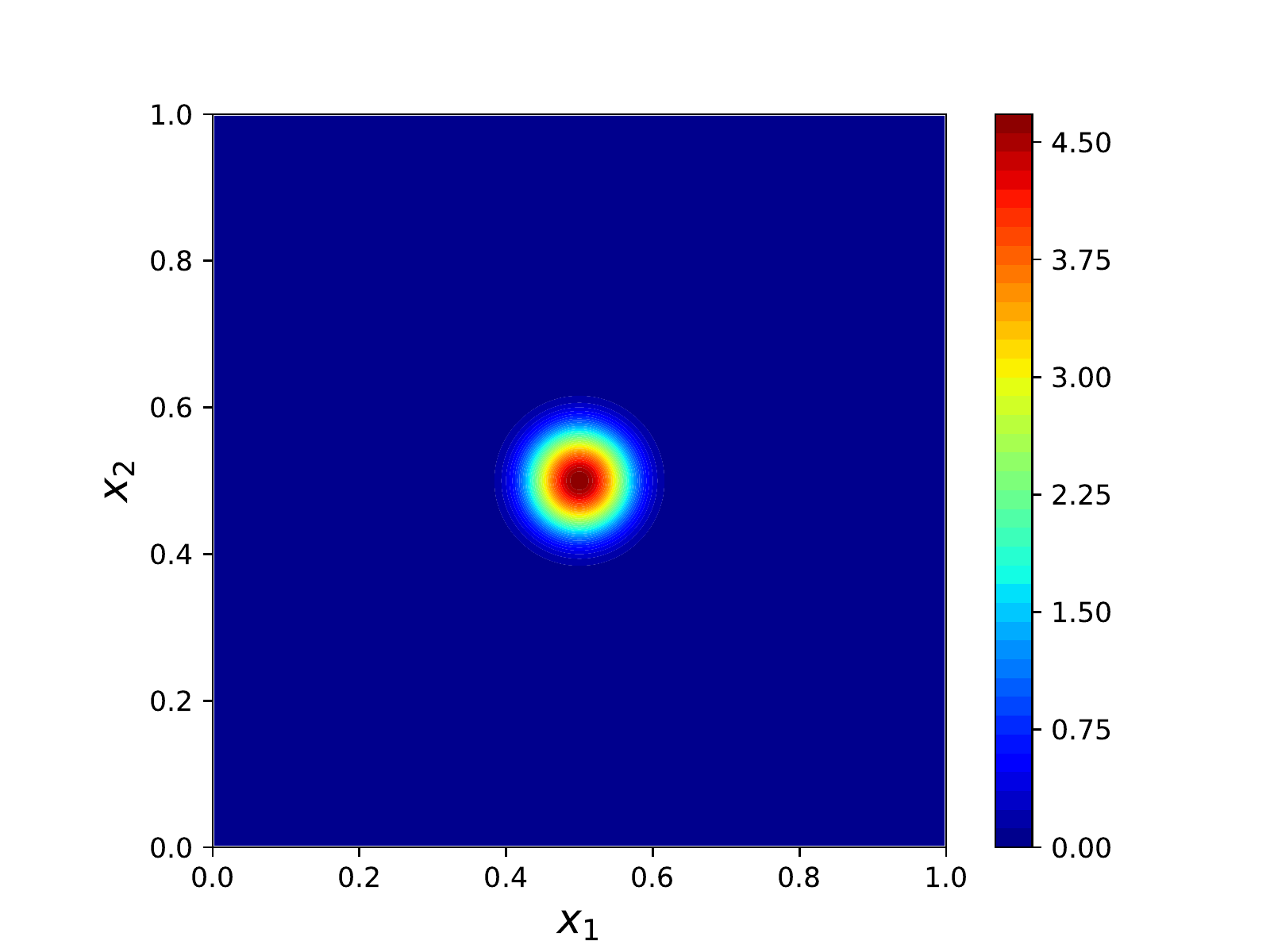}
    \end{minipage}
    }
    \subfigure[centers of active elements at $t=0.1$]{
    \begin{minipage}[b]{0.46\textwidth}    
    \includegraphics[width=1\textwidth]{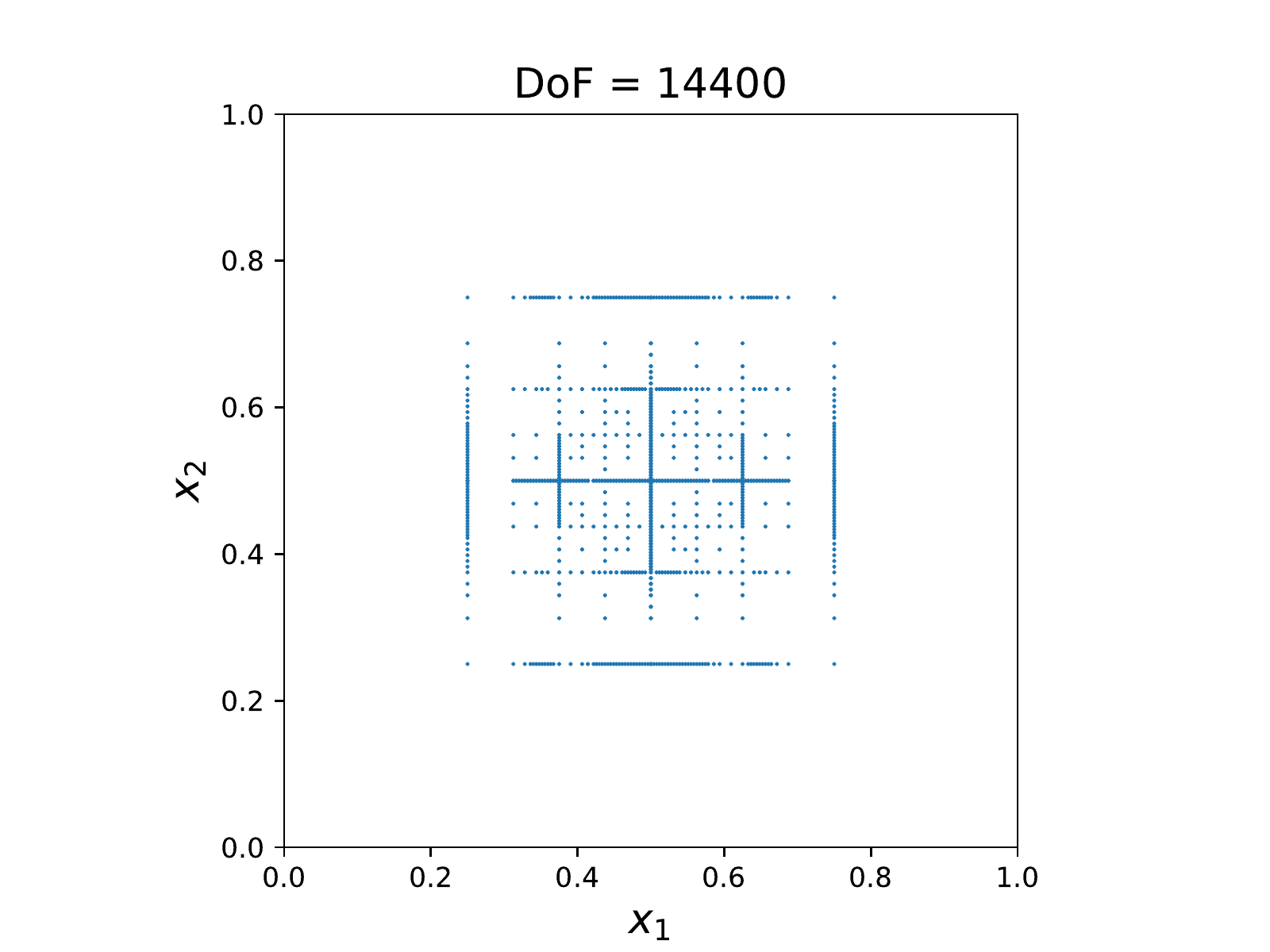}
    \end{minipage}
    }
    \bigskip
    \subfigure[solution profile at $t=0.3$]{
    \begin{minipage}[b]{0.46\textwidth}
    \includegraphics[width=1\textwidth]{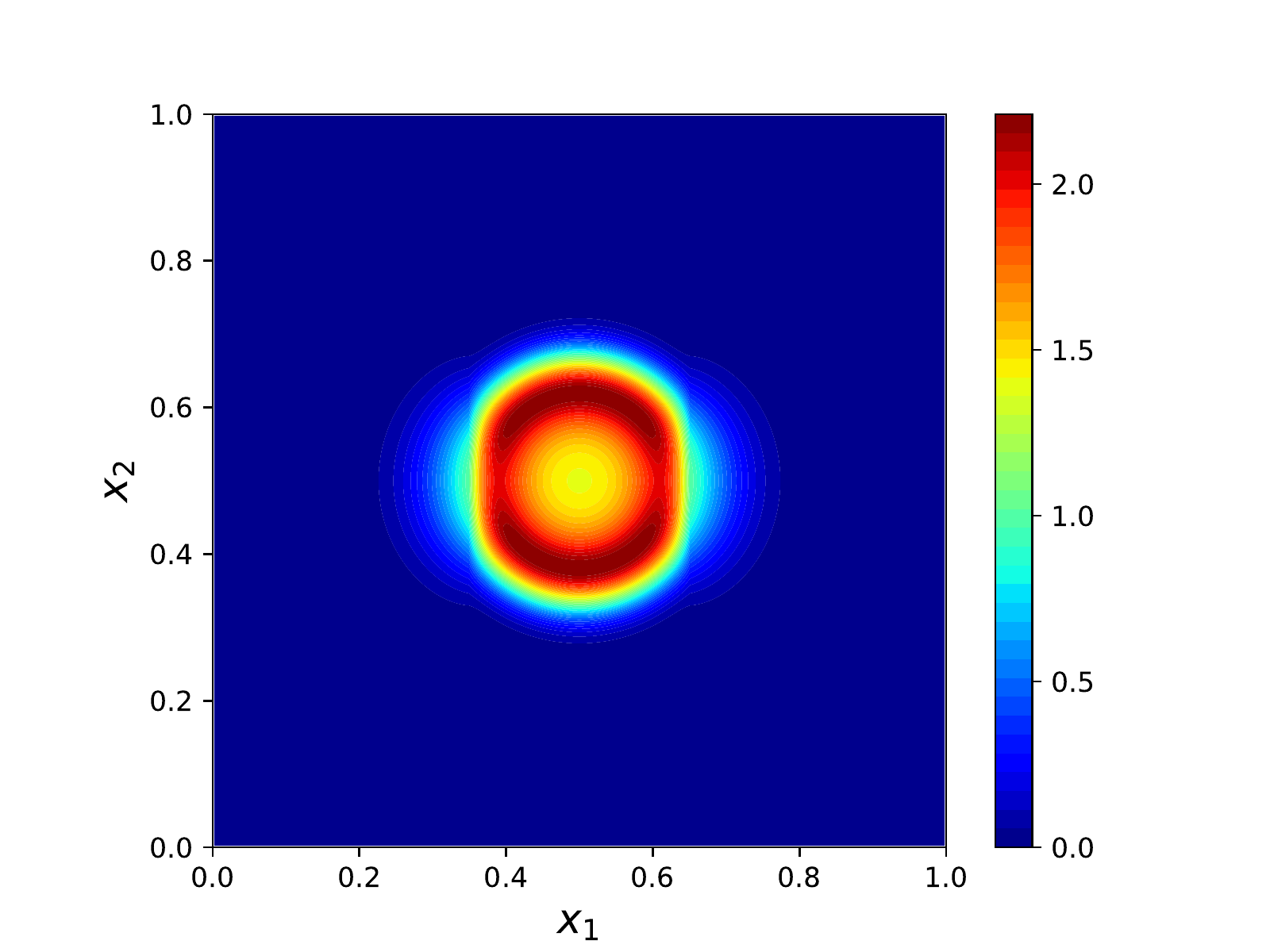}
    \end{minipage}
    }
    \subfigure[centers of active elements at $t=0.3$]{
    \begin{minipage}[b]{0.46\textwidth}    
    \includegraphics[width=1\textwidth]{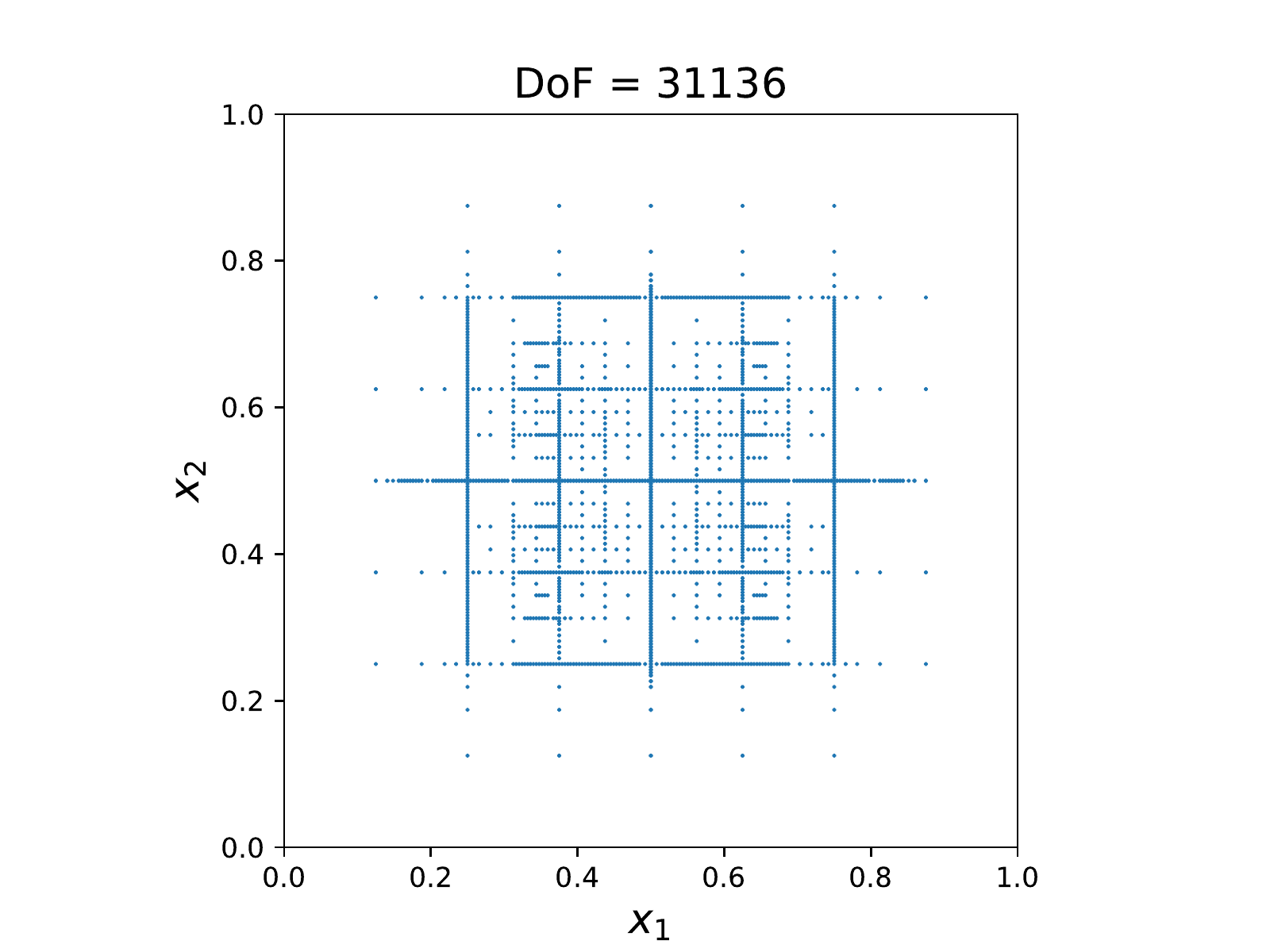}
    \end{minipage}
    }        
    \caption{Example \ref{exam:heter-media}: Isotropic wave propagation within heterogeneous media in 2D at $t=0.1$ and $t=0.3$. Adaptive sparse grid DG.  $N=8$ and $\epsilon=10^{-4}$. Left: solution profile; right: centers of active elements.}
    \label{fig:heter-media-2D}
\end{figure}

\begin{figure}
    \centering
    \subfigure[solution profile at $t=0.1$ (cut in 2D on $z=0.5$)]{
    \begin{minipage}[b]{0.46\textwidth}
    \includegraphics[width=1\textwidth]{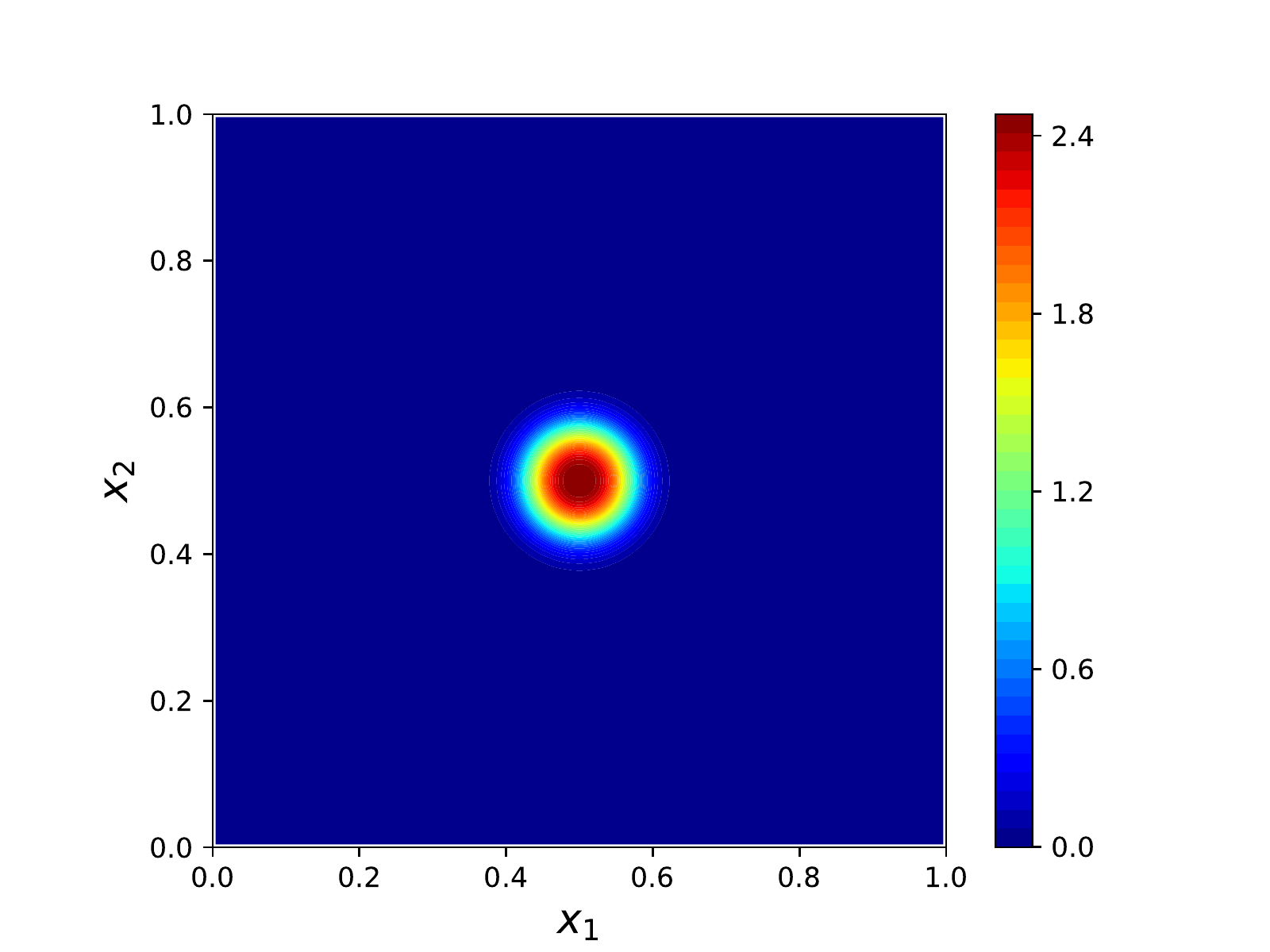}
    \end{minipage}
    }
    \subfigure[centers of active elements at $t=0.1$]{
    \begin{minipage}[b]{0.46\textwidth}    
    \includegraphics[width=1\textwidth]{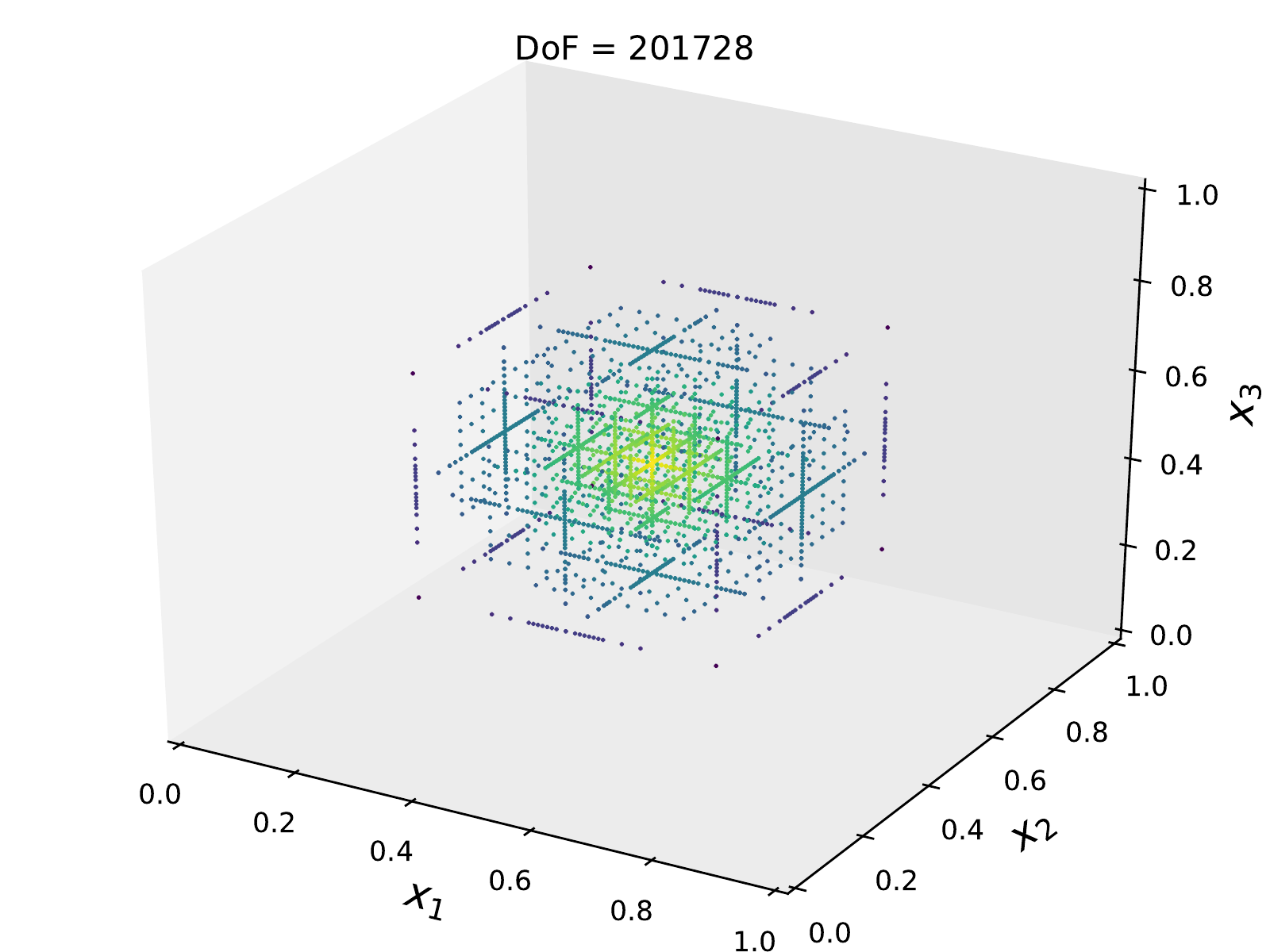}
    \end{minipage}
    }
    \bigskip
    \subfigure[solution profile at $t=0.3$ (cut in 2D on $z=0.5$)]{
    \begin{minipage}[b]{0.46\textwidth}
    \includegraphics[width=1\textwidth]{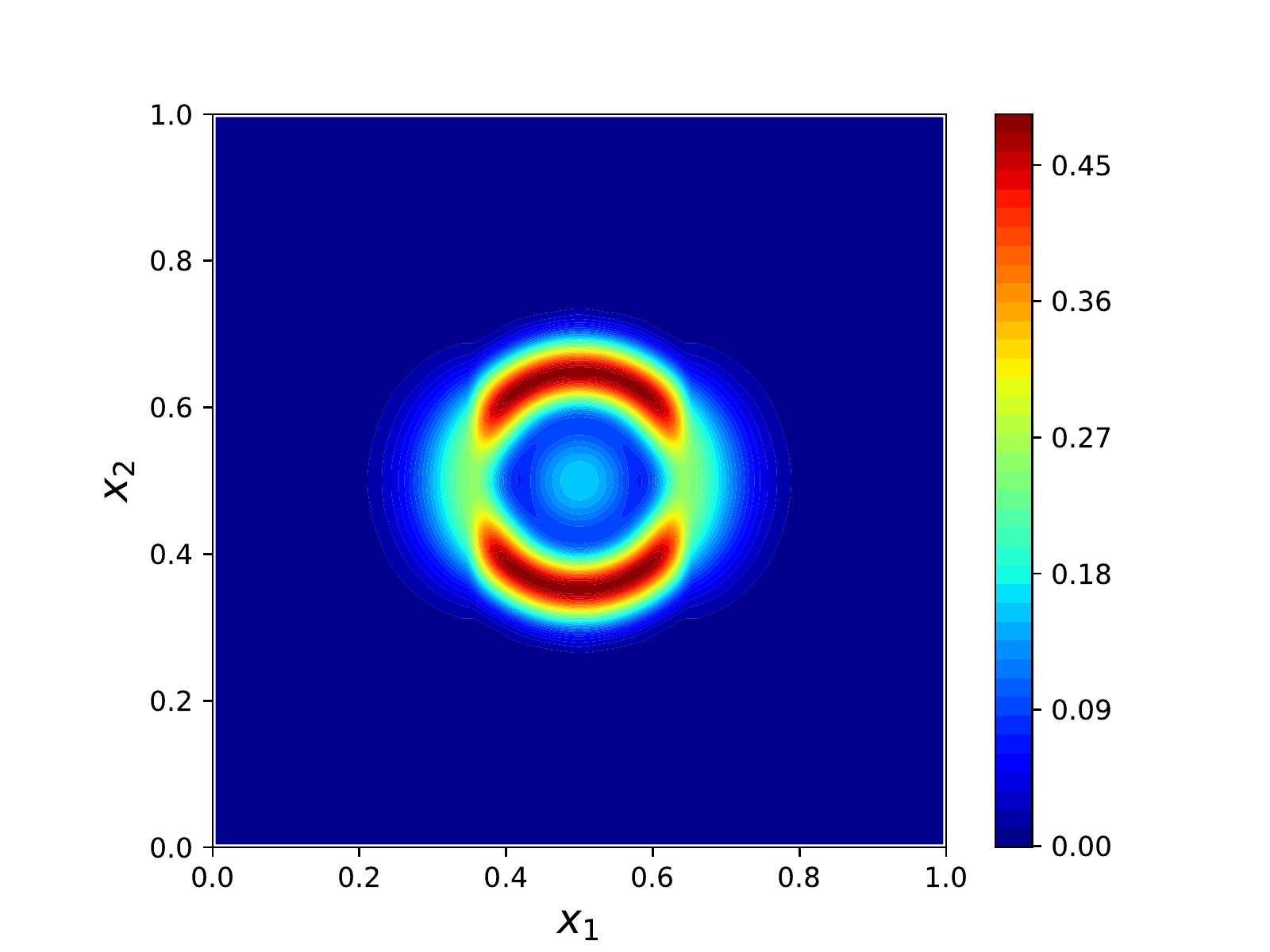}
    \end{minipage}
    }
    \subfigure[centers of active elements at $t=0.3$]{
    \begin{minipage}[b]{0.46\textwidth}    
    \includegraphics[width=1\textwidth]{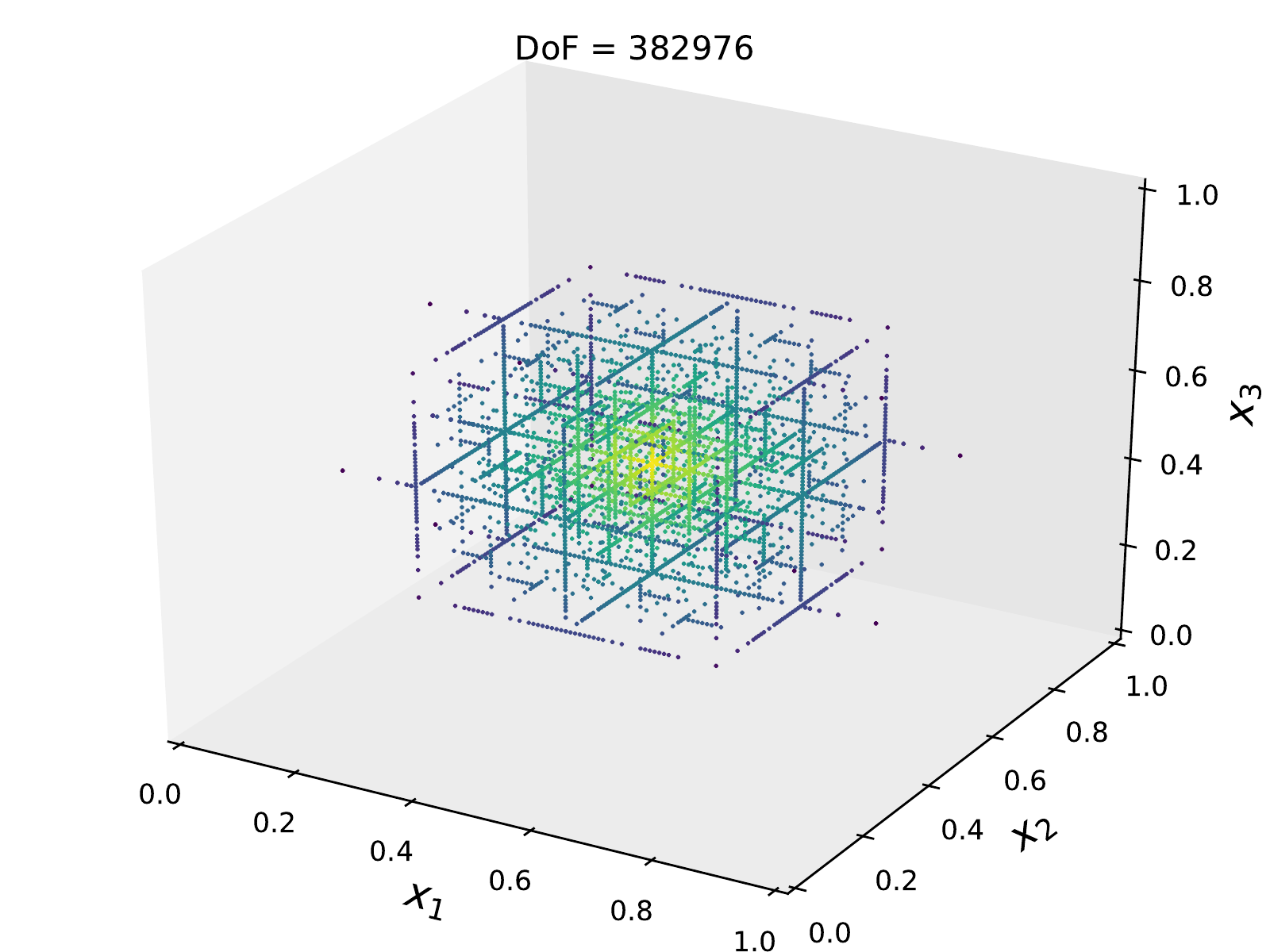}
    \end{minipage}
    }        
    \caption{Example \ref{exam:heter-media}: Isotropic wave propagation within heterogeneous media in 3D at $t=0.1$ and $t=0.3$. Adaptive sparse grid DG.  $N=7$ and $\epsilon=10^{-4}$. Left: solution profile; right: centers of active elements.}
    \label{fig:heter-media-3D}
\end{figure}

% section 6
\section{Conclusion}
\label{sec:conclusion}

In this paper, we develop an adaptive multiresolution DG scheme for wave equations in second order form in multi-dimensions.  Our method can achieve similar computational complexity as the sparse grid DG method for smooth solutions like those proposed for equations in first order form \cite{guo2016transport, guo2017adaptive}. Extensive numerical tests in 2D and 3D verify the accuracy and robustness of the adaptive schemes for smooth and piecewise smooth wave propagation speed. Though the formulation is based on IPDG scheme for scalar wave equation, it can be extended to other DG method and other wave applications. Future work includes extensions to other boundary conditions and investigation on stability of schemes with interpolation. %From numerical study, we find that the multiresolution Lagrange interpolation with interface interpolation points is the most stable. Because of the coherence of the multiresolution interpolation with the MRA of the numerical solution, our method can achieve similar computational complexity as the sparse grid DG method for smooth solutions \cite{guo2016transport, guo2017adaptive}.
In an effort for promoting reproducible research, the code generating the results in this paper  can be found at the github link: \url{https://github.com/JuntaoHuang/adaptive-multiresolution-DG}.

% section 7
\section*{Acknowledgment}
We would like to thank Daniel Appel\"{o} for discussions on numerical examples of wave propagation, Qi Tang and Kai Huang for the assistance and discussion in code implementation.

\bibliographystyle{abbrv}
%\bibliographystyle{plain}
% \bibliography{bibliography/BigBib,bibliography/adapt_cl,bibliography/ref_cheng,bibliography/ref_cheng_2,bibliography/adaptive,bibliography/some_wave}
% \bibliographystyle{abbrv}
% \bibliography{some_wave,BigBib,adapt_cl,ref_cheng,ref_cheng_2,adaptive}

%\begin{appendices}

\appendix

\section{Interpolation basis functions}

For completeness of the paper, we present details of the multiresolution interpolation basis functions, which are first introduced in \cite{tao2019collocation}. We will first focus on the case in which the interpolation points are imposed in the inner domain, as implemented in Table \ref{tab:smooth-sparse-2D-inner-pt}. Then we discuss the case in which the points includes the cell interface points. Here, we only discuss the case when $M=4$ and $M=5$. For $M=1,2,3$, we refer readers to the appendix in \cite{huang2019adaptive}.

The basis functions in $\tilde{W}_1$ are piecewise polynomials on $I_l:=(0,\half)$ and $I_r:=(\half,1)$. Note that the functions may be discontinuous at the interface $x=1/2$, thus $I_l$ and $I_r$ are  both defined to be open intervals. The basis functions in $\tilde{W}_1$ in this paper are all supported on one half interval $I_l$ or $I_r$ and vanish on the other half. For simplicity, we will only declare the function on its support. For example, $\psi_0(x)|_{I_r}$ gives the definition of $\psi_0$ on $I_r$ and indicates that $\psi_0$ vanishes on $I_l$.

\subsection{Interpolation points in the inner domain}

\subsubsection{$M=4$}

The interpolation points are
\begin{equation*}
    \tilde{X}_0 =  \{ \frac{1}{6},\frac{7}{24},\frac{1}{3},\frac{7}{12},\frac{2}{3} \}, 
    \quad \tilde{X}_1 =  \{ \frac{1}{12},\frac{7}{48},\frac{31}{48},\frac{19}{24},\frac{5}{6} \}.
\end{equation*}
The basis functions in $\tilde{W}_0^4$ and $\tilde{W}_1^4$ are
\begin{align*} 
\begin{array}{ll}
\phi_0(x) = \frac{4}{45} (3 x-2) (3 x-1) (12 x-7) (24 x-7), \\
\phi_1(x) = -\frac{512}{189}(3 x-2) (3 x-1) (6 x-1) (12 x-7), \\
\phi_2(x) = \frac{1}{3} (3 x-2) (6 x-1) (12 x-7) (24 x-7), \\
\phi_3(x) = -\frac{32}{105}(3 x-2) (3 x-1) (6 x-1) (24 x-7), \\
\phi_4(x) = \frac{1}{27} (3 x-1) (6 x-1) (12 x-7) (24 x-7).
\end{array} 
\end{align*}
and
\begin{align*} 
\begin{array}{l}
\psi_{0}(x)|_{I_l}= \frac{8}{45} (3 x-1) (6 x-1) (24 x-7) (48 x-7), \\
\psi_{1}(x)|_{I_l}= -\frac{1024}{189} (3 x-1) (6 x-1) (12 x-1) (24 x-7), \\ 
\psi_{2}(x)|_{I_r}= -\frac{1024}{189} (3 x-2) (6 x-5) (12 x-7) (24 x-19), \\
\psi_{3}(x)|_{I_r}= -\frac{64}{105} (3 x-2) (6 x-5) (12 x-7) (48 x-31), \\
\psi_{4}(x)|_{I_r}= \frac{2}{27} (3 x-2) (12 x-7) (24 x-19) (48 x-31)
\end{array}
\end{align*}

\subsubsection{$M=5$}

The interpolation points are
\begin{equation*}
    \tilde{X}_0 =  \{ \frac{1}{12},\frac{1}{6},\frac{7}{24},\frac{1}{3},\frac{7}{12},\frac{2}{3} \}, 
    \quad \tilde{X}_1 =  \{ \frac{7}{48},\frac{1}{24},\frac{31}{48},\frac{19}{24},\frac{5}{6},\frac{13}{24} \}.
\end{equation*}
The basis functions in $\tilde{W}_0^5$ and $\tilde{W}_1^5$ are
\begin{align*} 
\begin{array}{ll}
\phi_0(x) = \frac{1}{315} (-16) (3 x-2) (3 x-1) (6 x-1) (12 x-7) (24 x-7), \\
\phi_1(x) = \frac{4}{45} (3 x-2) (3 x-1) (12 x-7) (12 x-1) (24 x-7), \\
\phi_2(x) = -\frac{1024}{945}(3 x-2) (3 x-1) (6 x-1) (12 x-7) (12 x-1), \\
\phi_3(x) = \frac{1}{9} (3 x-2) (6 x-1) (12 x-7) (12 x-1) (24 x-7), \\
\phi_4(x) = -\frac{16}{315} (3 x-2) (3 x-1) (6 x-1) (12 x-1) (24 x-7), \\
\phi_5(x) = \frac{1}{189} (3 x-1) (6 x-1) (12 x-7) (12 x-1) (24 x-7),
\end{array} 
\end{align*}
and
\begin{align*} 
\begin{array}{l}
\psi_{0}(x)|_{I_l}= -\frac{2048}{945} (3 x-1) (6 x-1) (12 x-1) (24 x-7) (24 x-1), \\
\psi_{1}(x)|_{I_l}= -\frac{32}{315} (3 x-1) (6 x-1) (12 x-1) (24 x-7) (48 x-7), \\ 
\psi_{2}(x)|_{I_r}= -\frac{2048}{945} (3 x-2) (6 x-5) (12 x-7) (24 x-19) (24 x-13), \\
\psi_{3}(x)|_{I_r}= -\frac{32}{315} (3 x-2) (6 x-5) (12 x-7) (24 x-13) (48 x-31), \\
\psi_{4}(x)|_{I_r}= \frac{2}{189} (3 x-2) (12 x-7) (24 x-19) (24 x-13) (48 x-31), \\
\psi_{5}(x)|_{I_r}= -\frac{32}{315} (3 x-2) (6 x-5) (12 x-7) (24 x-19) (48 x-31)
\end{array}
\end{align*}

\subsection{Interpolation points with the interface points}

\subsubsection{$M=4$}

The interpolation points are
\begin{equation*}
    \tilde{X}_0 =  \{ 0^+,\brac{\frac{1}{4}}^-, \brac{\frac{1}{2}}^-, \brac{\frac{3}{4}}^-, 1^- \}, 
    \quad \tilde{X}_1 =  \{ \brac{\frac{1}{8}}^-, \brac{\frac{3}{8}}^-, \brac{\frac{1}{2}}^+, \brac{\frac{5}{8}}^-, \brac{\frac{7}{8}}^- \}.
\end{equation*}
The basis functions in $\tilde{W}_0^4$ and $\tilde{W}_1^4$ are
\begin{align*} 
\begin{array}{ll}
\phi_0(x) = \frac{1}{3} (x-1) (2 x-1) (4 x-3) (4 x-1), \\
\phi_1(x) = -\frac{16}{3} (x-1) x (2 x-1) (4 x-3), \\
\phi_2(x) = 4 (x-1) x (4 x-3) (4 x-1), \\
\phi_3(x) = -\frac{16}{3} (x-1) x (2 x-1) (4 x-1), \\
\phi_4(x) = \frac{1}{3} x (2 x-1) (4 x-3) (4 x-1).
\end{array} 
\end{align*}
and
\begin{align*} 
\begin{array}{l}
\psi_{0}(x)|_{I_l}= -\frac{32}{3} x (2 x-1) (4 x-1) (8 x-3), \\
\psi_{1}(x)|_{I_l}= -\frac{32}{3} x (2 x-1) (4 x-1) (8 x-1), \\ 
\psi_{2}(x)|_{I_r}= \frac{2}{3} (x-1) (4 x-3) (8 x-7) (8 x-5), \\
\psi_{3}(x)|_{I_r}= -\frac{32}{3} (x-1) (2 x-1) (4 x-3) (8 x-7), \\
\psi_{4}(x)|_{I_r}= -\frac{32}{3} (-32) (x-1) (2 x-1) (4 x-3) (8 x-5)
\end{array}
\end{align*}

\subsubsection{$M=5$}

The interpolation points are
\begin{equation*}
    \tilde{X}_0 =  \{ 0^+,\frac{1}{5},\frac{2}{5},\frac{3}{5},\frac{4}{5}, 1^- \}, 
    \quad \tilde{X}_1 =  \{ \frac{1}{10},\frac{3}{10},\brac{\frac{1}{2}}^-,\brac{\frac{1}{2}}^+,\frac{7}{10},\frac{9}{10} \}.
\end{equation*}
The basis functions in $\tilde{W}_0^5$ and $\tilde{W}_1^5$ are
\begin{align*} 
\begin{array}{ll}
\phi_0(x) = -\frac{1}{24} (x-1) (5 x-4) (5 x-3) (5 x-2) (5 x-1), \\
\phi_1(x) = \frac{25}{24} (x-1) x (5 x-4) (5 x-3) (5 x-2), \\
\phi_2(x) = -\frac{25}{12} (x-1) x (5 x-4) (5 x-3) (5 x-1), \\
\phi_3(x) = \frac{25}{12} (x-1) x (5 x-4) (5 x-2) (5 x-1), \\
\phi_4(x) = -\frac{25}{24} (x-1) x (5 x-3) (5 x-2) (5 x-1), \\
\phi_5(x) = \frac{1}{24} x (5 x-4) (5 x-3) (5 x-2) (5 x-1),
\end{array} 
\end{align*}
and
\begin{align*} 
\begin{array}{l}
\psi_{0}(x)|_{I_l}= \frac{25}{3} x (2 x-1) (5 x-2) (5 x-1) (10 x-3), \\
\psi_{1}(x)|_{I_l}= \frac{50}{3} x (2 x-1) (5 x-2) (5 x-1) (10 x-1), \\ 
\psi_{2}(x)|_{I_r}= \frac{1}{3} x (5 x-2) (5 x-1) (10 x-3) (10 x-1), \\
\psi_{3}(x)|_{I_r}= -\frac{1}{3} (x-1) (5 x-4) (5 x-3) (10 x-9) (10 x-7), \\
\psi_{4}(x)|_{I_r}= -\frac{50}{3} (x-1) (2 x-1) (5 x-4) (5 x-3) (10 x-9), \\
\psi_{5}(x)|_{I_r}= -\frac{25}{3} (x-1) (2 x-1) (5 x-4) (5 x-3) (10 x-7).
\end{array}
\end{align*}

%\end{appendices}

\end{document}